\documentclass{birkjour}
 \newtheorem{thm}{Theorem}[section]
 \newtheorem{cor}[thm]{Corollary}
 \newtheorem{lem}[thm]{Lemma}
 
 \theoremstyle{definition}
 \newtheorem{defn}[thm]{Definition}
 \theoremstyle{remark}
 \newtheorem{rem}[thm]{Remark}
 
 \numberwithin{equation}{section}
\usepackage{hyperref}
\begin{document}

%
%
%
%
%
%
%
%
%

\title[The Hamburger Criterion]
 {The Hamburger Criterion for Matrix\\ Nevanlinna–Pick Interpolation}

\author[Yury M. Dyukarev ]{Yury M. Dyukarev}

\address{%
Department of Physics,\\
 V. N. Karazin Kharkiv National University, \\
 Svobody sq. 4,\\
  Kharkiv, 61022, Ukraine \\
}

\email{yu.dyukarev@karazin.ua}

\subjclass{Primary 30E05; Secondary 47A57}

\subjclass{Primary 30E05; Secondary 47A57}

\keywords{Matrix Hamburger moment problem;
matrix Nevanlinna–\allowbreak{}Pick interpolation;
determinate case;
indeterminate case;
Hamburger criterion.}

\dedicatory{Dedicated to the memory of my teacher, V.~E.~Katsnelson}

\begin{abstract}
The classical Hamburger moment problem can be viewed as an interpolation problem for Nevanlinna functions. A closely related problem is the Nevanlinna--Pick interpolation problem. Each of these problems is either determinate, with a unique solution, or indeterminate, with infinitely many solutions. Hamburger established a criterion for the indeterminacy of the moment problem in terms of the convergence of two  series involving the classical polynomials of the first and second kind. In this paper, we establish an analogous criterion for the matrix Nevanlinna--Pick interpolation problem. The criterion is formulated in terms of the convergence of matrix series involving the rational functions of the first and second kind.
\end{abstract}

\maketitle

\section{Introduction}

Let
\(
\mathbb{C}_+ = \{z \in \mathbb{C} : \operatorname{Im} z > 0\}
\)
denote the open upper half-plane.
A holomorphic function \(w:\mathbb{C}_+\to\mathbb{C}\) is called a \emph{Nevanlinna function} if
\[ \operatorname{Im} w(z) \ge0
\qquad\text{for every } z\in\mathbb{C}_+.
\]
The class of all such functions is denoted by~$\mathcal{R}$.

In the \emph{Nevanlinna--Pick interpolation problem}, one seeks to describe all Nevanlinna functions \(w\in\mathcal{R}\) satisfying the interpolation conditions
\begin{equation}\label{NPs}
w(z_j)=w_j
\qquad\text{for all } j\in\mathbb{N},
\end{equation}
where the sequence of distinct interpolation nodes
\(
\{z_j\}_{j=1}^{\infty}\subset\mathbb{C}_+
\)
and the corresponding sequence of interpolation values
\(
\{w_j\}_{j=1}^{\infty}\subset\mathbb{C}
\)
are given.

In the \emph{Hamburger moment problem}, a sequence
\(
\{s_j\}_{j=0}^{\infty}\subset\mathbb{R}
\)
is given, and one seeks to describe all non-negative measures $\sigma$ on $\mathbb{R}$ such that
\begin{equation}\label{HMs}
s_j=\int_{\mathbb{R}} t^j\,\sigma(dt)
\qquad\text{for all } j\ge0.
\end{equation}
Any non-negative measure $\sigma$ satisfying~\eqref{HMs} is called a \emph{solution} of the Hamburger moment problem.

The following theorem establishes the equivalence between the Hamburger moment problem and a certain interpolation problem for Nevanlinna functions (see~\cite{Akh}).

\begin{thm}[Hamburger--Nevanlinna]\label{HN}
Let $\sigma$ be a solution of the Hamburger moment problem~\eqref{HMs}, and define
\begin{equation}\label{AF1}
w(z)=\int_{-\infty}^{+\infty}\frac{\sigma(dt)}{t-z}
\qquad\text{for every } z\in\mathbb{C}_+.
\end{equation}
Then \(w\in\mathcal{R}\), and for every \(\delta\in(0,\pi/2)\) the following asymptotic expansion holds:
\begin{equation}\label{ass}
w(z)\sim
-\frac{s_0}{z}
-\frac{s_1}{z^2}
-\cdots
-\frac{s_n}{z^{n+1}}
-\cdots,
\qquad
z\to\infty,\quad 
z\in\Gamma_\delta,
\end{equation}
where
\[
\Gamma_\delta=
\{z\in\mathbb{C}_+:\delta<\operatorname{Arg}z<\pi-\delta\},
\]
and $\operatorname{Arg}z\in(-\pi,\pi]$ denotes the principal value of the argument of the complex number~$z\neq 0$.

Conversely, suppose that \(w\in\mathcal{R}\) and that the asymptotic expansion~\eqref{ass} holds for some sequence of real numbers \(\{s_j\}_{j=0}^{\infty}\), at least along the imaginary axis \(z=iy\) as \(y\to+\infty\).
Then \(w\) admits the representation~\eqref{AF1}, and the coefficients in~\eqref{ass} satisfy
\[
s_j=\int_{\mathbb{R}} t^j\,d\sigma(t)
\qquad\text{for all } j\ge0,
\]
where \(\sigma\) is the measure appearing in~\eqref{AF1}.
\end{thm}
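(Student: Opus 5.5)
The plan is to treat the two directions separately. Both rest on the algebraic identity
\[
\frac{1}{t-z}=-\sum_{k=0}^{n}\frac{t^k}{z^{k+1}}+\frac{t^{n+1}}{z^{n+1}(t-z)},
\]
valid for $t\in\mathbb{R}$ and $z\in\mathbb{C}_+$, together with the Nevanlinna integral representation of functions in $\mathcal{R}$.

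\emph{Direct part.} First, $w\in\mathcal{R}$ because
\[
\operatorname{Im}\frac{1}{t-z}=\frac{\operatorname{Im}z}{|t-z|^2}>0 ,
\]
and holomorphy follows by differentiating under the integral sign; this is legitimate since $\sigma$ is finite ($s_0<\infty$). Next, integrate the identity above against $\sigma$, which is allowed because all moments are finite. This gives
\[
w(z)+\sum_{k=0}^{n}\frac{s_k}{z^{k+1}}=\frac{1}{z^{n+1}}\int\frac{t^{n+1}\,\sigma(dt)}{t-z}.
\]
In $\Gamma_\delta$ we have $|t-z|\ge |z|\sin\delta$ for real $t$. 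Applying the identity with $n+1$ in place of $n$, the remainder equals $-s_{n+1}/z^{n+2}+O(|z|^{-n-3})$ once $s_{n+2}<\infty$ is used. Hence the remainder is $o(|z|^{-n-1})$, uniformly in $\Gamma_\delta$.

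\emph{Converse.} By the Nevanlinna representation,
\[
w(z)=\alpha+\beta z+\int\Bigl(\frac{1}{t-z}-\frac{t}{1+t^2}\Bigr)\tau(dt),
\]
with $\beta\ge0$ and $\int(1+t^2)^{-1}\tau(dt)<\infty$. Taking $n=0$ in \eqref{ass} along $z=iy$ gives $iy\,w(iy)\to -s_0$. Consider the imaginary part: $y\operatorname{Im}w(iy)\to s_0$. Now
\[
y\operatorname{Im}w(iy)=\beta y^2+\int\frac{y^2}{t^2+y^2}\,\tau(dt).
\]
This forces $\beta=0$, and by monotone convergence $\tau(\mathbb{R})\le s_0<\infty$. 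Finiteness of $\tau$ lets us absorb the $t/(1+t^2)$ term into a constant. Since $w(iy)\to0$, that constant vanishes, and we obtain \eqref{AF1} with $\sigma=\tau$ and $\tau(\mathbb{R})=s_0$ (monotone convergence actually gives equality).

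\emph{Main obstacle: the higher moments.} We must show $\int t^{2m}\,d\sigma<\infty$ and that these integrals equal $s_{2m}$, proceeding inductively. Suppose $\int t^{2m}d\sigma=s_{2m}$ and the lower moments are known to match. Apply the identity at order $2m-1$ and compare with \eqref{ass} at order $2m+1$. This yields
\[
\int\frac{t^{2m}\,\sigma(dt)}{t-iy}=-\frac{s_{2m}}{iy}-\frac{s_{2m+1}}{(iy)^2}+o(y^{-2}).
\]
Multiply by $iy$ and isolate the real and imaginary parts. The imaginary part combined with $iy$ gives
\[
\int\frac{t^{2m}y^2}{t^2+y^2}\,d\sigma\to s_{2m}.
\]
The next order governs
\[
y^2\Bigl(\int\frac{t^{2m}y^2}{t^2+y^2}\,d\sigma-s_{2m}\Bigr)=-\int\frac{t^{2m+2}y^2}{t^2+y^2}\,d\sigma ,
\]
using $\int t^{2m}d\sigma=s_{2m}$, and the expansion shows this is bounded, tending to $-s_{2m+2}$ when the order-$(2m+2)$ term is also used. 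Monotone convergence then gives $\int t^{2m+2}d\sigma=s_{2m+2}$. The odd moments are then finite, and dominated convergence applied to the real part yields $\int t^{2m+1}d\sigma=s_{2m+1}$.

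Keeping the bookkeeping of real and imaginary parts straight, and making sure only finitely many terms of the expansion are needed at each step, is the delicate point. I would organize the induction so that step $m$ uses \eqref{ass} only up to order $2m+2$.
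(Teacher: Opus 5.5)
Your proposal is correct. It is essentially the classical argument from Akhiezer's monograph, which the paper cites instead of giving its own proof: the remainder identity for $1/(t-z)$ with $|t-z|\ge |z|\sin\delta$ on $\Gamma_\delta$ for the direct part, and for the converse the Nevanlinna representation, $y\operatorname{Im}w(iy)\to s_0$ to remove the linear term and get a finite measure, then induction on the even moments by monotone convergence of $\int t^{2m}y^2(t^2+y^2)^{-1}\,\sigma(dt)$. One small imprecision: the expansion through order $2m+1$ only gives $y^2\bigl(\int t^{2m}y^2(t^2+y^2)^{-1}\,\sigma(dt)-s_{2m}\bigr)=o(y)$, so boundedness (indeed convergence to $-s_{2m+2}$) needs the order-$(2m+2)$ term, which your final organization of the induction does use.
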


It follows from Theorem~\ref{HN} that the Hamburger moment problem is equivalent to finding a function \(w\in\mathcal{R}\) with a prescribed asymptotic expansion~\eqref{ass}.
This expansion may be viewed as an analogue of the Nevanlinna--Pick interpolation conditions~\eqref{NPs}.
For simplicity, we will refer to problem~\eqref{ass} as the Hamburger moment problem.

The problems~\eqref{NPs} and~\eqref{ass} are both interpolation problems for Nevanlinna functions.
This explains why many results for the two problems are analogous.
At the same time, in the Nevanlinna--Pick problem~\eqref{NPs}, the interpolation nodes are distributed throughout the upper half-plane~\(\mathbb{C}_+\), whereas in the moment problem~\eqref{ass} there is only a single multiple interpolation node at the point \(z=\infty\).
This distinction explains why analogous results for the two problems may take substantially different forms.

We restrict our attention to the case in which each of the problems~\eqref{NPs} and~\eqref{ass} has at least one solution.
Then exactly one of the following alternatives holds for each problem:
either there exist infinitely many solutions (the \emph{indeterminate} case), or there exists a unique solution (the \emph{determinate} case).

Hamburger~\cite{H1,H2,H3} established a criterion for the indeterminacy of the Hamburger moment problem~\eqref{ass}.
The criterion is formulated in terms of the convergence of a  series whose entries are given by the classical polynomials of the first and second kind. A modern operator-theoretic formulation of these results of Hamburger can be found in \cite{Sim}.

In the present paper, we establish an analogous criterion for the indeterminacy of the Nevanlinna--Pick problem~\eqref{NPs}.
The criterion is likewise formulated in terms of the convergence of a series whose entries are the rational functions of the first and second kind (see~\cite{D22}).
In the Nevanlinna--Pick problem, these rational functions play a role analogous to that of the polynomials of the first and second kind in the Hamburger moment problem.

In contrast, the indeterminacy criterion for the Nevanlinna--Pick problem~\eqref{NPs} is considerably more intricate than the corresponding criterion for the Hamburger moment problem~\eqref{ass}.
We show that, for a special class of Nevanlinna--Pick problems with purely imaginary interpolation nodes, the indeterminacy criterion is completely analogous to that for the Hamburger moment problem.

The new results presented in this paper provide a clearer understanding of the similarities and differences between the Nevanlinna--Pick interpolation problem~\eqref{NPs} and the Hamburger moment problem~\eqref{ass} (see also~\cite{D22,D25}).
All results in this paper are established for matrix interpolation problems.
In this introduction, we restrict our discussion to the scalar case for simplicity.

\section{The Matrix Hamburger Moment Problem}

In this section, we summarize several known results concerning the matrix Hamburger moment problem. These results are formulated within the framework of V. P. Potapov's approach to interpolation problems in analysis (see, e.g., \cite{Ko,KPW,KoPo,Sah}). They are stated in a form that makes the analogies between the moment problem and the Nevanlinna--Pick problem transparent.

We begin by introducing the notation used throughout the paper.

Let $\mathbb{C}^{m}$ denote the vector space of complex column vectors,
\[
x=\operatorname{col}(x_1,x_2,\dots,x_m),
\]
equipped with the inner product
\(
(x,y)=\sum_{j=1}^{m}\overline{x_j}\,y_j.
\)
Let $\mathbb{C}^{m\times n}$ denote the space of complex $m\times n$ matrices.
For a matrix $A\in\mathbb{C}^{m\times n}$, its conjugate transpose is denoted by $A^*\in\mathbb{C}^{n\times m}$.
If $A\in\mathbb{C}^{m\times m}$, then $\operatorname{tr}A$ denotes its trace.
A matrix $A\in\mathbb{C}^{m\times m}$ is called \emph{Hermitian} if $A=A^*$.
The set of all Hermitian matrices is denoted by $\mathbb{C}_H^{m\times m}$.
A matrix $A\in\mathbb{C}_H^{m\times m}$ is called \emph{non-negative} if
\(
(x,Ax)\ge0,\ x\in\mathbb{C}^m,
\)
and the set of all such matrices is denoted by $\mathbb{C}^{m\times m}_{\ge}$.
If
\(
(x,Ax)>0
\)
for every nonzero vector $x\in\mathbb{C}^m$, then $A$ is called \emph{positive definite}; the set of all positive definite matrices is denoted by $\mathbb{C}^{m\times m}_{>}$.
The identity matrix of order $m$ is denoted by $I_m$, and the zero matrix of size $m\times n$ by $O_{m\times n}$.
Whenever the dimensions are clear from the context, the subscripts are omitted.
For Hermitian matrices $A,B\in\mathbb{C}_H^{m\times m}$, we write
\(
A\ge B
\)
if $A-B$ is non-negative, and
\(
A>B
\)
if $A-B$ is positive definite.
If $A$ is invertible, we set
\(
A^{-*}=(A^{-1})^*.
\)
For a matrix function $f(z)$, we write
\(
f^*(z)=(f(z))^*,
\)
and, whenever $f(z)$ is invertible,
\(
f^{-*}(z)=\bigl(f(z)^{-1}\bigr)^*.
\)

Throughout the paper, limits involving matrices are understood componentwise or, equivalently, with respect to any matrix norm. Since the matrix spaces under consideration are finite-dimensional, these notions of convergence are equivalent.

Let $\mathfrak{B}$ denote the $\sigma$-algebra of Borel subsets of $\mathbb{R}$.
A set function
\(
\sigma:\mathfrak{B}\to\mathbb{C}^{m\times m}_{\ge}
\)
is called a \emph{non-negative matrix measure} if it is countably additive in the sense that
\[
\sigma\!\left(\bigcup_{j=1}^{\infty}A_j\right)
=\sum_{j=1}^{\infty}\sigma(A_j),
\]
whenever $\{A_j\}_{j=1}^{\infty}$ is a sequence of pairwise disjoint Borel subsets of $\mathbb{R}$.

Finally, we introduce the open upper and lower half-planes:
\[
\mathbb{C}_-=\{z\in\mathbb{C}:\operatorname{Im}z<0\},\qquad
\mathbb{C}_+=\{z\in\mathbb{C}:\operatorname{Im}z>0\}.
\]

In the \emph{matrix Hamburger moment problem}, one is given a sequence of matrices
\begin{equation}\label{s_j}
\{s_j\}_{j=0}^\infty \subset \mathbb{C}^{m \times m}_H,
\end{equation}
and one seeks to describe all non-negative matrix measures $\sigma$ on $\mathbb{R}$ such that
\begin{equation}\label{HM}
  s_j = \int_{\mathbb{R}} t^j \, \sigma(dt) \qquad \text{for each } j \geq 0.
\end{equation}
Any non-negative matrix measure $\sigma$ satisfying \eqref{HM} is called a \emph{solution} of the matrix Hamburger moment problem.

It is well known that a necessary and sufficient condition  for the existence of a solution is that the \emph{Hankel block matrices}
\begin{equation*}
\mathbf{H}_j =
\begin{pmatrix}
s_0 & s_1 & \cdots & s_j \\
s_1 & s_2 & \cdots & s_{j+1} \\
\vdots & \vdots & \ddots & \vdots \\
s_j & s_{j+1} & \cdots & s_{2j}
\end{pmatrix}
\end{equation*}
are non-negative for each $j \geq 0$.

In this paper, we assume that the Hankel block matrices are positive definite:
\begin{equation}\label{PMH}
\mathbf{H}_j > O_{m(j+1)} \qquad \text{for each } j \geq 0.
\end{equation}
These assumptions exclude the degenerate case of the truncated matrix Hamburger moment problem.

Let $\mathcal{M}$ denote the set of all solutions $\sigma$ to problem~\eqref{HM}. Under the above assumption, we have $\mathcal{M} \neq \emptyset $.

\begin{defn}
A holomorphic matrix function \( w: \mathbb{C}_+ \to \mathbb{C}^{m \times m} \)
is called a \emph{Nevanlinna matrix function} if 
\[
\frac{w(z) - w^*(z)}{2i} \geq O \qquad \text{for all } z \in \mathbb{C}_+.
\]
\end{defn}
\noindent
The class of such matrix functions of fixed order $m$ is denoted by $\mathcal{R}_m$.

To each matrix measure $\sigma \in \mathcal{M}$ we associate the matrix function
\begin{equation}\label{AssF}
w(z) = \int_{\mathbb{R}} \frac{\sigma(dt)}{t - z} \qquad \text{for each } z \in \mathbb{C}_+.
\end{equation}
The function thus obtained belongs to the class $\mathcal{R}_m$ and is said to be \emph{associated with the moment problem}~\eqref{HM}.  
Let $\mathcal{F}$ denote the set of all such associated functions.

By the Stieltjes inversion formula, the correspondence between $w\in \mathcal{F}$ and $\sigma \in \mathcal{M}$ established by formula~\eqref{AssF} is bijective.  
Therefore, instead of describing the set of solutions to the moment problem $\mathcal{M}$, we may describe the corresponding set of associated functions $\mathcal{F}$.

Let us fix a point $z_0$ in the upper half-plane $\mathbb{C}_+$ and consider the set
\begin{equation}\label{1.8}
\mathfrak{K}(z_0) = \{w(z_0) : w \in \mathcal{F} \}. 
\end{equation}
There exist matrices $r(z_0) \in \mathbb{C}^{m\times m}_\geq$, 
$\rho(z_0) \in \mathbb{C}^{m\times m}_\geq$, and 
$c(z_0) \in \mathbb{C}^{m\times m}$ such that \eqref{1.8} admits the representation
\begin{equation}\label{1.9}
\mathfrak{K}(z_0) = \left\{ c(z_0) + r(z_0) V \rho(z_0) : V^* V \leq I_m \right\}. 
\end{equation}
The set $\{ V : V^* V \leq I_m \}$ is called the unit matrix ball.
Equation \eqref{1.9}  shows that $\mathfrak{K}(z_0)$ is the image of the unit matrix ball under an affine transformation.  
Accordingly, $\mathfrak{K}(z_0)$ can be interpreted as a matrix ball with \emph{center} $c(z_0)$, 
\emph{left semi-radius} $r(z_0)$, and \emph{right semi-radius} $\rho(z_0)$.
In the context of the matrix Hamburger moment problem, this set is called the \emph{limit matrix Weyl ball} at the point $z_0$.

A theorem of S.~A.~Orlov shows that the ranks of the left and right semi-radii of the limit matrix Weyl balls are constant throughout the upper half-plane. 
In other words, for all $z_1, z_2 \in \mathbb{C}_+$,  
\[
 \operatorname{rank} r(z_1) = \operatorname{rank} r(z_2), \qquad
 \operatorname{rank} \rho(z_1) = \operatorname{rank} \rho(z_2).
\]  
We denote these numbers by $\delta_+$ and $\delta_-$, respectively.

The integers $\delta_\pm$, which satisfy $0 \leq \delta_\pm \leq m$, characterize the degree of indeterminacy of the solution set of the matrix Hamburger moment problem.  
It was shown in~\cite{Kog} that if either of the numbers $\delta_\pm$ equals $m$, then both equal $m$.  
In this case, the matrix Hamburger moment problem is said to be \emph{completely indeterminate},  
and the corresponding limit matrix Weyl balls are said to be \emph{nondegenerate}.  
Otherwise, the matrix Hamburger moment problem and the associated limit matrix Weyl balls are said to be \emph{degenerate}.

Assume now that the integers $\delta_+$ and $\delta_-$ satisfy 
$0 \leq \delta_+ \leq m - 1$ and $0 \leq \delta_- \leq m - 1$, 
or $\delta_+ = \delta_- = m$.  
It follows from the results in~\cite{D1}, \cite{D2} that there exists a matrix Hamburger moment problem of order $m$ 
such that the left semi-radius of the corresponding limit matrix Weyl ball has rank $\delta_+$ 
and the right semi-radius has rank $\delta_-$.  

Given the sequence \eqref{s_j}, define the following block matrices:
\begin{equation*}
y_{j,k} = \begin{pmatrix}
s_j \\
\vdots \\
s_k
\end{pmatrix}, \quad 
z_{j,k} = (s_j \ \ldots \ s_k)
\end{equation*}
for all integers \(j\) and \(k\) such that \(0 \le j \le k\). Furthermore, for each
integer \(j>0\), consider the block matrices
\begin{equation*}
u_0 = O_{m \times m}, \quad 
u_j = \begin{pmatrix}
O_{m \times m} \\
- y_{0,j-1}
\end{pmatrix}, \quad 
v_0 = I_m,\
v_j = \begin{pmatrix}
I_m \\
O_{mj \times m}
\end{pmatrix},
\end{equation*}
\begin{equation*}
\widetilde{v}_0 = I_m,\ \
\widetilde{v}_j = \begin{pmatrix}
O_{m\times mj} & I_m 
\end{pmatrix}, \ \
T_0 = O_{m \times m}, \ \
T_j = \begin{pmatrix}
O_{m \times mj} & O_{m \times m} \\
I_{mj} & O_{mj \times m}
\end{pmatrix},
\end{equation*}
\begin{equation*}
R_{T_j}(z) = \left( I_{(j+1)m} - z T_j \right)^{-1} =
\begin{pmatrix}
I_m & O & \cdots & O \\
z I_m & I_m & \cdots & O \\
\vdots & \ddots & \ddots & \vdots \\
z^j I_m & \cdots & z I_m & I_m
\end{pmatrix}.
\end{equation*}

Define   
\[
\widehat{H}_0 = \mathbf{H}_0, \quad 
\widehat{H}_j = s_{2j} - z_{j,2j-1} \mathbf{H}_{j-1}^{-1} y_{j,2j-1}
\qquad \text{for all } j > 0.
\] 
It is known (see, for example, \cite{D0}) that  
\[
\widehat{H}_j > O_{m \times m}
 \qquad \text{for all } j \geq 0.
\]  

For all \( j \geq 0 \), we define the matrix polynomials of the first and second kind by  
\begin{align}
P_j(z) &= \widehat{H}_j^{1/2} \widetilde{v}_j \mathbf{H}_j^{-1} R_{T_j}(z) v_j, \label{P12kinB}\\
Q_j(z) &= -\widehat{H}_j^{1/2} \widetilde{v}_j \mathbf{H}_j^{-1} R_{T_j}(z) u_j. \label{Q12kinB}
\end{align}

Consider the Hermitian matrix
\begin{equation}\label{J}
\mathcal{J}= 
\begin{pmatrix}
O & -iI \\
iI & O
\end{pmatrix}
\in {\mathbb C}^{2m\times 2m}.
\end{equation}
Clearly,
\(
\mathcal{J}^2=I, \ \mathcal{J}^*=\mathcal{J}.
\)

\begin{defn}
The linear matrix polynomial
\[
b_j : \mathbb{C} \to \mathbb{C}^{2m \times 2m},
\]
defined by
\begin{equation}\label{bpfl}
b_j(z) = I_{2m} - izE_j,\qquad j \ge 0,
\end{equation}
is called a \emph{Blaschke--Potapov factor}.
Here
\begin{equation}\label{nilp}
E_j =
\begin{pmatrix}
P_j^*(0)P_j(0)& -P_j^*(0)Q_j(0)\\
- Q_j^*(0)P_j(0)&  Q_j^*(0)Q_j(0)
\end{pmatrix}
\mathcal{J}=
\begin{pmatrix}
P_j^*(0) \\
- Q_j^*(0)
\end{pmatrix}
\bigl(
P_j(0)-Q_j(0)
\bigr)
\mathcal{J}.
\end{equation}
\end{defn}
From the matrix version of the Ostrogradsky--Liouville formulas (see, e.g., \cite{Berg}), it follows that
\[
P_j(0) Q_j^*(0) - Q_j(0) P_j^*(0) = O\qquad \text{for each\ }  j \ge 0.
\]
Hence, \(E_j^2 = O\) and the representation \eqref{bpfl} can be rewritten as
\begin{equation*}
b_j(z) = \exp\bigl(- i z E_j\bigr)\qquad \text{for each\ } j \ge 0.
\end{equation*}

\begin{thm}[Hamburger Criterion]\label{BPP}
Let the matrix Hamburger moment problem \eqref{HM} be given, and suppose that
the conditions \eqref{PMH} are satisfied. Let the matrix polynomials of the
first and second kind be defined by \eqref{P12kinB} and \eqref{Q12kinB},
respectively, and let the matrices \(E_j\) be defined by \eqref{nilp}.
Then the matrix Hamburger moment problem \eqref{HM} is completely
indeterminate if and only if the matrix series
\begin{equation}\label{SumE}
\sum_{j=0}^{\infty} 
\begin{pmatrix}
P_j^*(0)P_j(0)& -P_j^*(0)Q_j(0)\\
- Q_j^*(0)P_j(0)&  Q_j^*(0)Q_j(0)
\end{pmatrix}
\mathcal{J}
\end{equation}
converges.
\end{thm}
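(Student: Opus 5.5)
The series \eqref{SumE} is, by definition, $\sum_{j\ge0}E_j$. Each term is congruent to a non-negative matrix: $E_j\mathcal{J}=F_jF_j^*\ge O$ with $F_j=\operatorname{col}(P_j^*(0),-Q_j^*(0))$. I will reduce both directions to the convergence of
\[
\sum_{j\ge0}\bigl(P_j^*(0)P_j(0)+Q_j^*(0)Q_j(0)\bigr).
\]
The first step is to observe that, because $E_j\mathcal{J}\ge O$, convergence of $\sum E_j$ is equivalent to convergence of the non-negative series $\sum E_j\mathcal{J}$. This in turn is equivalent to boundedness of its traces, $\sum_j \operatorname{tr}\bigl(P_j^*(0)P_j(0)+Q_j^*(0)Q_j(0)\bigr)<\infty$. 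The reason is that off-diagonal blocks of a non-negative matrix are controlled by its diagonal blocks (Cauchy--Schwarz), and a monotone bounded sequence of Hermitian matrices converges. So the theorem becomes the statement that complete indeterminacy holds if and only if $\sum_j P_j^*(0)P_j(0)$ and $\sum_j Q_j^*(0)Q_j(0)$ both converge. This is the matrix form of Hamburger's classical criterion, evaluated at the point $0$.

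The second step is to connect the limit Weyl ball \eqref{1.9} to these sums, using Potapov's fundamental matrix inequality. For the truncated problem of order $n$, the solution set at $z_0\in\mathbb{C}_+$ is a matrix ball $\mathfrak{K}_n(z_0)$ whose semi-radii satisfy the following identity (Christoffel--Darboux / Weyl ball formulas, in the normalization \eqref{P12kinB}, \eqref{Q12kinB}):
\[
\rho_n(z_0)^{-2}\sim \frac{z_0-\bar z_0}{i}\sum_{j=0}^n P_j^*(z_0)P_j(z_0),
\]
and symmetrically with $P_j(\bar z_0)$ for $r_n(z_0)^{-2}$. The semi-radii decrease monotonically to $r(z_0),\rho(z_0)$. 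Hence the limit ball is nondegenerate, meaning $\operatorname{rank}r=\operatorname{rank}\rho=m$ (Orlov's constancy together with the result of \cite{Kog}), if and only if $\sum_j P_j^*(z_0)P_j(z_0)$ converges for one, equivalently every, nonreal $z_0$. The analogous statement for $Q_j$ follows from the same ball analysis applied to the second-kind data, using that $Q_j$ plays the role of $P_j$ for the dual problem. An alternative route is the Wronskian identity: $Q_j$ is bounded in $\ell^2$ by the $P$-series and the Cauchy transform.

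The third step, and the main obstacle, is to move from a nonreal point $z_0$ to the real point $0$. Convergence of $\sum\|P_j(z)\|^2+\|Q_j(z)\|^2$ at one point must imply it at all $z\in\mathbb{C}$, and conversely. The approach I would try is the transfer-matrix factorization: the product $W_n(z)=b_0(z)\cdots b_n(z)$, or the resolvent matrix $U_n(z)$ built from $P_j,Q_j$, satisfies
\[
U_n(z)-U_n(w)=(z-w)\sum_{j}(\cdots)
\]
and can be bounded through a Gronwall-type estimate
\[
\|U_n(z)\|\le \exp\Bigl(|z|\sum_j\|E_j\|\Bigr).
\]
This estimate gives uniform convergence of $\sum_j \|P_j(z)\|^2$ on compacts once the series at $0$ converges, which is the direction "convergence of \eqref{SumE} $\Rightarrow$ complete indeterminacy". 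For the converse, I would use the standard fact that in the completely indeterminate case the matrix Nevanlinna resolvent matrix $U(z)=\lim U_n(z)$ exists and is entire of minimal exponential type. In particular the sums converge locally uniformly on $\mathbb{C}$, including at $z=0$. This can be proved by the same Gronwall argument run with $E_j(z_0)$ centered at a nonreal point, followed by re-expanding around $0$. Controlling this re-centering uniformly in $n$ is where I expect the real work.
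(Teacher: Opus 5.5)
\textbf{There is a genuine gap: the necessity direction is not proved.} The paper quotes this theorem without proof, since Section~2 only collects known results. The natural benchmark is therefore its proof of the Nevanlinna--Pick analogue, Theorem~\ref{HC}.

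Your Step~1 and your sufficiency direction are sound. Since $\|E_j\|\le\operatorname{tr}(E_j\mathcal J)$, convergence of \eqref{SumE} gives $\|U_n^{\pm1}(z)\|\le\exp(|z|\sum_j\|E_j\|)$. This bounds both $\mathcal J$-forms of $U_n$ at a nonreal point, so the Weyl-ball radii stay nondegenerate.

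The necessity direction is the real content of the theorem, and your proposal leaves it open.
\begin{enumerate}
\item The ``standard fact'' you invoke is that $U=\lim U_n$ exists and is entire, so the sums converge at $z=0$. In this paper that is Theorem~\ref{BPRM}, which follows from the present criterion via Theorem~\ref{t33}. Citing it assumes what is to be shown.
\item The re-centering you sketch is unspecified exactly where it matters. Suppose $E_j(z_0)$ means \eqref{nilp} with $P_j(0),Q_j(0)$ replaced by $P_j(z_0),Q_j(z_0)$. Then it fails: the Ostrogradsky--Liouville relation $P_jQ_j^*-Q_jP_j^*=O$ is a real-point identity, so these matrices are in general not nilpotent, and $U_n$ does not factor through them.
\item In terms of the factorization, the Weyl-ball information at a nonreal point controls $U_j(z_0)F_j$, not $F_j$. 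Indeed, $U_n=\overrightarrow{\prod}_{j\le n}b_j$ and $F_j^*\mathcal JF_j=O$ give $U_n(z)\mathcal JU_n^*(z)-\mathcal J=2\operatorname{Im}z\sum_{j\le n}(U_j(z)F_j)(U_j(z)F_j)^*$.
\end{enumerate}
So the missing step is precisely a bound on $U_j(z_0)^{-1}$ that is uniform in $j$.

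\textbf{First fix: the paper's route, with no re-centering at all.}
\begin{itemize}
\item At $z=iy$ with $y>0$, one has $b_j(iy)=\exp(yE_j)$ and $E_j\mathcal J\ge O$. So $U_n(iy)$ is a product of $(-\mathcal J)$-moduli.
\item Complete indeterminacy bounds $U_n(iy)\mathcal JU_n^*(iy)$ and $U_n^{-*}(iy)\mathcal JU_n^{-1}(iy)$, through the Weyl-ball factorizations analogous to \eqref{J0} and \eqref{JI0}.
\item Potapov's Theorem~\ref{Pmod} then gives convergence of $\sum_j yE_j$.
\end{itemize}
This is the proof of Theorem~\ref{HC} with $\widetilde{\mathcal P}_j$ replaced by $E_j$, and no $\mathcal J$-unitary renormalization is needed.

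\textbf{Second fix: your own idea, made elementary by choosing the right re-centered factors} $\widetilde E_j=U_j(z_0)E_jU_j(z_0)^{-1}$.
\begin{itemize}
\item Pure algebra gives $U_n(z)U_n(z_0)^{-1}=\overrightarrow{\prod}_{j=0}^{n}\exp(-i(z-z_0)\widetilde E_j)$.
\item The identity $U_j^{-1}(z_0)=\mathcal JU_j^*(\bar z_0)\mathcal J$ gives $\widetilde E_j=(U_j(z_0)F_j)(U_j(\bar z_0)F_j)^*\mathcal J$.
\item Use the two Weyl-ball bounds, transporting the second to $\bar z_0$ via $U_n(\bar z_0)=\mathcal JU_n^{-*}(z_0)\mathcal J$, together with the identity above. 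They give $\sum_j\|U_j(z_0)F_j\|^2<\infty$ and $\sum_j\|U_j(\bar z_0)F_j\|^2<\infty$.
\item By Cauchy--Schwarz, $\sum_j\|\widetilde E_j\|<\infty$.
\item Your Gronwall bound evaluated at $z=0$ then gives $\sup_n\|U_n(z_0)^{-1}\|<\infty$.
\item Hence $\|F_j\|\le C\|U_j(z_0)F_j\|$, and $\sum_jE_j\mathcal J=\sum_jF_jF_j^*$ converges.
\end{itemize}
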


\begin{cor}\label{R26}
Under the assumptions of Theorem~\ref{BPP}, the matrix Hamburger moment
problem \eqref{HM} is \emph{completely indeterminate} if and only if both
matrix series
\begin{equation}\label{MHC}
\sum_{j=0}^{\infty}P_j^*(0)P_j(0),
\qquad
\sum_{j=0}^{\infty}Q_j^*(0)Q_j(0)
\end{equation}
converge.
\end{cor}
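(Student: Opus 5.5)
By Theorem~\ref{BPP}, it suffices to show that the series \eqref{SumE} converges if and only if both series in \eqref{MHC} converge. Set $A_j=P_j^*(0)P_j(0)\ge O$, $B_j=Q_j^*(0)Q_j(0)\ge O$ and $C_j=P_j^*(0)Q_j(0)$. Right multiplication by the invertible constant matrix $\mathcal{J}$ does not affect convergence. Hence \eqref{SumE} converges if and only if the block series
\[
\sum_{j=0}^\infty G_j,\qquad G_j=\begin{pmatrix} A_j & -C_j\\ -C_j^* & B_j\end{pmatrix}
\]
converges.

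\textbf{Both series in \eqref{MHC} converge $\Rightarrow$ \eqref{SumE} converges.} The diagonal blocks of $\sum_j G_j$ are exactly the two series in \eqref{MHC}, so they converge by assumption. For the off-diagonal blocks, note that $G_j$ has the Gram form
\[
G_j=\begin{pmatrix}P_j^*(0)\\ -Q_j^*(0)\end{pmatrix}\begin{pmatrix}P_j(0) & -Q_j(0)\end{pmatrix}\ge O.
\]
Fix vectors $x,y\in\mathbb{C}^m$. The Cauchy--Schwarz inequality gives
\[
|(x,C_jy)|=|(P_j(0)x,Q_j(0)y)|\le (x,A_jx)^{1/2}(y,B_jy)^{1/2}\le \tfrac12\bigl((x,A_jx)+(y,B_jy)\bigr).
\]
So $\sum_j (x,C_jy)$ converges absolutely for every $x,y$. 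Taking $x,y$ to be standard basis vectors, every entry of $\sum_j C_j$ converges, and so does its adjoint. Hence all four blocks converge and \eqref{SumE} converges.

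\textbf{\eqref{SumE} converges $\Rightarrow$ both series in \eqref{MHC} converge.} This is the key direction, and it is immediate. The diagonal blocks of $\sum_j G_j$ are $\sum_j A_j$ and $\sum_j B_j$. If the block series converges, then each of its diagonal blocks converges, by componentwise convergence. These are precisely the series in \eqref{MHC}.

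Combining the two implications with Theorem~\ref{BPP} proves the corollary. The only mildly non-trivial step is controlling the off-diagonal terms by the diagonal ones, and it rests on the positive-semidefinite Gram structure of the summands.
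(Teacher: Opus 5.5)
Your proof is correct and follows essentially the paper's route: the paper states this corollary without separate proof, but proves its Nevanlinna--Pick counterpart \eqref{MHC+} exactly this way, by dropping the invertible factor $\mathcal J$ and invoking Corollary~\ref{AjBi}, whose proof (via Lemma~\ref{lemXsX}) bounds the off-diagonal blocks of a sum of Gram matrices by the diagonal ones using Cauchy--Schwarz. The only cosmetic difference is where Cauchy--Schwarz is applied: you use it termwise in $\mathbb{C}^m$ together with AM--GM and a comparison test, whereas the paper applies it in $\ell^2$ over the index $j$ after reducing to the diagonal entries (traces).
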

In the scalar case, criterion \eqref{MHC} reduces to Hamburger's classical criterion for indeterminacy of the moment problem.
\begin{thm}\label{BPRM}
Assume that the hypotheses of Theorem~\ref{BPP} hold and that the matrix Hamburger moment problem \eqref{HM} is \emph{completely indeterminate}.
Then, for every \(z\in\mathbb{C}\), the infinite Blaschke--Potapov product
\begin{equation}\label{RMbpf}
U(z)=\mathop{\overrightarrow{\prod}}_{j=0}^{\infty}\exp\bigl(-izE_j\bigr)
\end{equation}
converges and defines an entire matrix function \(U\), where  the arrow
indicates that the factors are multiplied in increasing order of the index
\(j\), from left to right.
\end{thm}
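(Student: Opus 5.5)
We use Theorem~\ref{BPP}: since the problem is completely indeterminate, the series $\sum_{j\ge0}E_j$ in \eqref{SumE} converges. The plan is to deduce convergence of the ordered product from this, through the standard estimate for infinite matrix products with summable logarithmic perturbations.

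\emph{Step 1: reduce to absolute summability.} Write $E_j=F_j\mathcal{J}$ with
\[
F_j=\begin{pmatrix}P_j^*(0)\\ -Q_j^*(0)\end{pmatrix}\bigl(P_j(0)\ \ -Q_j(0)\bigr)\ \ge O .
\]
Since each $F_j$ is non-negative, for nonnegative matrices $\|F_j\|\le\operatorname{tr}F_j$ in the operator norm. Convergence of $\sum_j F_j=\bigl(\sum_j E_j\bigr)\mathcal{J}$ gives convergence of $\sum_j\operatorname{tr}F_j$. Hence
\[
\sum_j\|E_j\|\le\sum_j\|F_j\|\,\|\mathcal{J}\|<\infty .
\]
This is the key step: it upgrades mere convergence of the series to absolute convergence, which is what product estimates need. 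Positivity of $F_j$ is essential here.

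\emph{Step 2: uniform estimates for partial products.} Set $U_n(z)=\prod_{j=0}^{n}\exp(-izE_j)$, taken left to right. Because $E_j^2=O$, we have $\exp(-izE_j)=I-izE_j$ and
\[
\|I-izE_j\|\le 1+|z|\,\|E_j\|\le e^{|z|\|E_j\|}.
\]
Therefore $\|U_n(z)\|\le \exp\bigl(|z|\sum_j\|E_j\|\bigr)=:C(z)$ uniformly in $n$. The bound $C(z)$ is locally bounded in $z$.

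\emph{Step 3: Cauchy criterion.} For $n<p$,
\[
U_p-U_n=U_n\Bigl(\prod_{j=n+1}^{p}(I-izE_j)-I\Bigr).
\]
Expanding the product gives the elementary bound
\[
\Bigl\|\prod_{j=n+1}^{p}(I-izE_j)-I\Bigr\|\le \exp\Bigl(|z|\sum_{j>n}\|E_j\|\Bigr)-1 .
\]
Hence $\|U_p(z)-U_n(z)\|\le C(z)\bigl(\exp(|z|\sum_{j>n}\|E_j\|)-1\bigr)\to0$, uniformly on compact subsets of $\mathbb{C}$. So $U_n$ converges locally uniformly. Each $U_n$ is a matrix polynomial in $z$, hence entire, and Weierstrass' theorem makes the limit $U$ entire.

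The main obstacle is Step 1: passing from convergence of the (non-absolutely presented) series \eqref{SumE} to $\sum\|E_j\|<\infty$. It is resolved by the factorization \eqref{nilp}, which exhibits $E_j\mathcal{J}$ as a Gram-type non-negative matrix. The rest is routine. As an aside, $\det U_n=1$ since each factor is unipotent, so $U$ is also invertible, though this is not needed.
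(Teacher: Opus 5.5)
Your proof is correct. The paper gives no proof of Theorem~\ref{BPRM}: it is listed in Section~2 among known results on the moment problem. The derivation implicit in the paper's framework is Theorem~\ref{BPP} (convergence of $\sum_j E_j$) followed by Theorem~\ref{t33} with $V_j=-izE_j$. Your argument is a self-contained replacement for that citation, and it adds two things.

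\emph{Absolute summability.} Theorem~\ref{t33} as quoted assumes only convergence of $\sum_j V_j$. For non-commuting factors, however, the standard product estimates rest on absolute summability. Your Step~1 supplies exactly this, using $E_j\mathcal{J}\ge O$ and $\|F\|\le\operatorname{tr}F$ for $F\ge O$. This is the same positivity device as Lemma~\ref{lemXsX} and Corollary~\ref{tr X}, which the paper itself uses in the proof of Theorem~\ref{HC}.

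\emph{Entireness.} A pointwise product theorem does not by itself show that $U$ is entire. Your Cauchy estimate is uniform on $|z|\le R$, so Weierstrass' theorem applies. This direct norm bound is also much lighter than the mechanism the paper uses for the Nevanlinna--Pick analogue, Theorem~\ref{ReMa}. There, convergence at one point $z_0$ is propagated via monotonicity of $\widetilde U_n\mathcal{J}\widetilde U_n^*$ and general results on monotone $\mathcal{J}$-expansive sequences. That route is needed there because the factors have poles at $\overline{z}_j$. Here all factors are entire, and $\sum_j\|E_j\|<\infty$ controls everything.

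One correction: your closing aside is not optional. Under the paper's definition, a convergent infinite product must have invertible factors and an invertible limit. The facts $(I-izE_j)^{-1}=I+izE_j$ and $\det U(z)=\lim_n\det U_n(z)=1$ give both, so they belong in the proof rather than in a remark.
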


\begin{defn}
The entire matrix function
\(
U:\mathbb{C}\to\mathbb{C}^{2m\times 2m}
\)
defined by \eqref{RMbpf} is called the \emph{Nevanlinna matrix} of the completely indeterminate moment problem \eqref{HM}.
\end{defn}

The representation \eqref{RMbpf} of the Nevanlinna matrix as a product of elementary Blaschke–Potapov matrix factors is a particular case of the general theory developed by V. P. Potapov \cite{P1}. Representations of this kind arising in interpolation problems have been studied in \cite{Ko}, \cite{KPW}.

\begin{defn}
Let $G$ be a nonempty open subset of the complex plane.
A subset $\mathcal{D} \subset G$ is called a \emph{discrete subset} of $G$ if it has no accumulation points in $G$; equivalently, every point of $\mathcal{D}$ is isolated.
\end{defn}

\begin{defn}
Let $\phi$ and $\psi$ be $m \times m$ matrix functions meromorphic in $\mathbb{C}_+$.  
The pair $\begin{pmatrix}\phi \\ \psi\end{pmatrix}$, together with a discrete subset $\mathcal{D}_{\phi\psi} \subset \mathbb{C}_+$, is called a \emph{Nevanlinna pair} if the following conditions hold: 
\begin{enumerate}
  \item $\phi$ and $\psi$ are holomorphic in $\mathbb{C}_+ \setminus \mathcal{D}_{\phi\psi}$.
  \item $\operatorname{rank} \begin{pmatrix}\phi(z) \\ \psi(z)\end{pmatrix} = m$ 
  \qquad \text{for all } $z \in \mathbb{C}_+ \setminus \mathcal{D}_{\phi\psi}$.
  \item $\begin{pmatrix}\phi(z) \\ \psi(z)\end{pmatrix}^* \mathcal{J} \begin{pmatrix}\phi(z) \\ \psi(z)\end{pmatrix} \geq O$
       \qquad \text{for all } $z \in \mathbb{C}_+ \setminus \mathcal{D}_{\phi\psi}$.
\end{enumerate}
\end{defn}
\noindent The set \( \mathcal{D}_{\phi\psi} \) is called the \emph{exceptional set} of the Nevanlinna pair
\(
\begin{pmatrix}
\phi \\
\psi
\end{pmatrix}.
\)

\begin{defn}
Two Nevanlinna pairs
\(
\begin{pmatrix}\phi_1\\[2pt]\psi_1\end{pmatrix}
\)
and
\(
\begin{pmatrix}\phi_2\\[2pt]\psi_2\end{pmatrix}
\),
with exceptional sets
$\mathcal{D}_{\phi_1\psi_1}$ and $\mathcal{D}_{\phi_2\psi_2}$, respectively,
are said to be \emph{equal} if their exceptional sets coincide,
$\mathcal{D}_{\phi_1\psi_1} = \mathcal{D}_{\phi_2\psi_2}$,
and for all $z \in \mathbb{C}_+ \setminus \mathcal{D}_{\phi_1\psi_1}$ the following equalities hold:
\[
\phi_1(z) = \phi_2(z), \qquad \psi_1(z) = \psi_2(z).
\]
\end{defn}

We denote by $\overline{\mathcal{R}}_m$ the set of all Nevanlinna pairs of $m \times m$ matrix functions.

\begin{defn}
Let $\mathcal{Q}_m$ denote the set of all meromorphic $m \times m$ matrix functions $Q(z)$ in $\mathbb{C}_+$ such that $\det Q(z) \not\equiv 0$.  
For each such function, let $\mathcal{D}_Q$ denote the discrete subset of $\mathbb{C}_+$ consisting of the isolated singularities of $Q(z)$ together with the zeros of $\det Q(z)$.  
The set $\mathcal{D}_Q$ is called the \emph{exceptional set} of the matrix function $Q \in \mathcal{Q}_m$.
\end{defn}

\begin{defn}
Two Nevanlinna pairs
\(
\begin{pmatrix}\phi_1\\[2pt]\psi_1\end{pmatrix}\)
and
\(\begin{pmatrix}\phi_2\\[2pt]\psi_2\end{pmatrix}
\)
with exceptional sets
$\mathcal{D}_{\phi_1\psi_1}$ and $\mathcal{D}_{\phi_2\psi_2}$, respectively,
are said to be \emph{equivalent} if there exists a meromorphic matrix function
\( Q \in \mathcal{Q}_m \) with the exceptional set
$\mathcal{D}_Q$
such that
\[
\begin{pmatrix}\phi_1(z)\\[2pt]\psi_1(z)\end{pmatrix}
=
\begin{pmatrix}\phi_2(z)\\[2pt]\psi_2(z)\end{pmatrix} Q(z)
\qquad
\text{for all } z \in \mathbb{C}_+ \setminus
\bigl\{\mathcal{D}_{\phi_1\psi_1} \cup \mathcal{D}_{\phi_2\psi_2} \cup \mathcal{D}_Q\bigr\}.
\]
\end{defn}

It is easy to verify that this relation is an \emph{equivalence relation}; that is, it is reflexive, symmetric, and transitive. Consequently, the set of Nevanlinna pairs is partitioned into disjoint equivalence classes. The set of all such equivalence classes is denoted by 
\(\langle\overline{\mathcal{R}}_m\rangle\).

Let $\alpha, \beta, \gamma, \delta, \phi, \psi \in \mathbb{C}^{m \times m}$ be matrices such that $\alpha \phi + \beta \psi$ is invertible.  
Then, by definition, the quotient
\[
\frac{\gamma \phi + \delta \psi}{\alpha \phi + \beta \psi}
\]
is understood as
\[
\bigl( \gamma \phi + \delta \psi \bigr)
\bigl( \alpha \phi + \beta \psi \bigr)^{-1}.
\]

\begin{thm}\label{tN2}
Suppose that the matrix Hamburger moment problem~\eqref{HM} is
\emph{completely indeterminate}, and let $\mathcal{F}$ denote the set of associated
functions defined by~\eqref{AssF}. Let the Nevanlinna matrix \(U\), defined
by~\eqref{RMbpf}, be partitioned into four \(m\times m\) blocks as follows:
\[
U=
\begin{pmatrix}
\alpha & \beta\\
\gamma & \delta
\end{pmatrix}.
\]

Then the following statements hold:
\begin{enumerate}
\item
Let  
\(
\begin{pmatrix}
\phi \\ \psi
\end{pmatrix}
\in \overline{\mathcal{R}}_m\)
be a {Nevanlinna pair} with exceptional set $\mathcal{D}_{\phi\psi}$. Define the matrix function \(w(z)\) by
\begin{equation} \label{npFull}
  w(z) =
  \begin{cases}
    \displaystyle\frac{\gamma(z)\phi(z) + \delta(z)\psi(z)}{\alpha(z)\phi(z) + \beta(z)\psi(z)}
    & \text{for each } z \in \mathbb{C}_+ \setminus \mathcal{D}_{\phi \psi}, \\[1.2em]
    \displaystyle\lim_{\substack{s \to z\\ s \in \mathcal{U}^*(z)}}
    \frac{\gamma(s)\phi(s) + \delta(s)\psi(s)}{\alpha(s)\phi(s) + \beta(s)\psi(s)}
    & \text{for each } z \in \mathcal{D}_{\phi \psi},
  \end{cases}
\end{equation}
where \(\mathcal{U}^*(z)\) denotes a punctured neighborhood of \(z \in \mathcal{D}_{\phi \psi}\) contained in \(\mathbb{C}_+\) and disjoint from \(\mathcal{D}_{\phi \psi}\).
Then the function \(w\) is well-defined and holomorphic in \(\mathbb{C}_+\); moreover, \(w \in \mathcal{F}\).

\item
Conversely, every \(w \in \mathcal{F}\) admits a representation of the form~\eqref{npFull} for some Nevanlinna pair 
\(
\begin{pmatrix}
\phi\\
\psi
\end{pmatrix}\in  \overline{\mathcal{R}}_m.
\)

\item
Two Nevanlinna pairs generate the same matrix function \(w\) via~\eqref{npFull} if and only if they are equivalent. Hence,~\eqref{npFull} establishes a one-to-one correspondence between the set \(\mathcal{F}\) and the set of equivalence classes of Nevanlinna pairs \(\langle \overline{\mathcal{R}}_m \rangle\).
\end{enumerate}
\end{thm}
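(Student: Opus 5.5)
The plan is to reduce Theorem~\ref{tN2} to the classical Potapov-type description of the truncated problems and then pass to the limit. First I will establish the identification of $U$ with the limit of the resolvent matrices of the truncated moment problems. For each $n$, set $U_n(z)=\prod_{j=0}^{n}\exp(-izE_j)$ (ordered left to right). Using the definitions \eqref{P12kinB}, \eqref{Q12kinB}, I will show by induction on $n$ that the blocks of $U_n$ are expressed through the polynomials of the first and second kind. One way is to verify the three-term recurrence for $(P_j,Q_j)$ in the form of a one-step transfer matrix identity. This should give, up to a constant $\mathcal{J}$-unitary normalization, the resolvent matrix of the truncated problem with moments $s_0,\dots,s_{2n+1}$ (or $s_{2n}$).

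Each $U_n$ is $\mathcal{J}$-expansive in $\mathbb{C}_+$, i.e.
\[
\mathcal{J}-U_n^*(z)\mathcal{J}U_n(z)\ge O \quad (z\in\mathbb{C}_+),
\]
with equality on $\mathbb{R}$. This is seen factor by factor from
\[
\mathcal{J}-b_j^*\mathcal{J}b_j=2\operatorname{Im}z\,\begin{pmatrix}P_j^*(0)\\-Q_j^*(0)\end{pmatrix}\begin{pmatrix}P_j(0)&-Q_j(0)\end{pmatrix}\ge O,
\]
since $E_j\mathcal{J}$ is non-negative and $E_j^2=O$. By Theorem~\ref{BPRM}, $U_n\to U$ locally uniformly, so $U$ inherits these properties. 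Also $\det U\equiv 1$, since each factor has determinant one.

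Next I will prove statement~(1). For a Nevanlinna pair, $U\binom{\phi}{\psi}$ satisfies
\[
\Bigl(U\binom{\phi}{\psi}\Bigr)^*\mathcal{J}\,U\binom{\phi}{\psi}\le \binom{\phi}{\psi}^*\mathcal{J}\binom{\phi}{\psi}\cdot
\]
Care is needed with signs: in the Potapov convention the product $U\mathcal{J}U^*$ is compared with $\mathcal{J}$. I will fix the convention so that the bottom-over-top quotient $w$ has $(w-w^*)/2i\ge O$. From the strict inequality coming from $\mathcal{J}-U^*\mathcal{J}U$ being positive on a nontrivial subspace, together with complete indeterminacy, I will deduce that $\alpha\phi+\beta\psi$ is invertible off a discrete set. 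I will then show that $w$ extends holomorphically across $\mathcal{D}_{\phi\psi}$, because a Nevanlinna function bounded near a point has removable singularities. Membership $w\in\mathcal{F}$ comes from the truncated problems. Writing $w$ as the limit of $w_n=(\gamma_n\phi+\delta_n\psi)(\alpha_n\phi+\beta_n\psi)^{-1}$, each $w_n$ lies in the truncated Weyl ball and so has the asymptotic expansion \eqref{ass} up to order $2n$. Because the balls are nested, the limit keeps all these expansions along $iy$. The converse part of Theorem~\ref{HN} (matrix version) then gives $w\in\mathcal{F}$.

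For statement~(2), given $w\in\mathcal{F}$, I will set $\binom{\phi}{\psi}=U^{-1}\binom{w}{I}$, which is entire in $U$ since $\det U=1$. The task is to show that this is a Nevanlinna pair, i.e.
\[
\binom{w}{I}^*U^{-*}\mathcal{J}U^{-1}\binom{w}{I}\ge O.
\]
This is the main obstacle. The approach I will try first is the matrix Weyl-ball (Potapov fundamental inequality) argument at level $n$: $w\in\mathcal{F}$ implies the FMI for the truncated problem, which is equivalent to $U_n^{-1}\binom{w}{I}$ being a Nevanlinna pair. I will then pass to the limit $n\to\infty$ using locally uniform convergence $U_n^{-1}\to U^{-1}$. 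The rank condition follows from invertibility of $U$. For statement~(3), if two pairs give the same $w$, then both $U\binom{\phi_k}{\psi_k}$ have the form $\binom{w}{I}(\alpha\phi_k+\beta\psi_k)$. Hence $\binom{\phi_1}{\psi_1}=\binom{\phi_2}{\psi_2}Q$ with $Q=(\alpha\phi_2+\beta\psi_2)^{-1}(\alpha\phi_1+\beta\psi_1)\in\mathcal{Q}_m$, and the converse direction is immediate.
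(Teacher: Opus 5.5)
Your plan follows the scheme the paper relies on. The paper gives no proof of Theorem~\ref{tN2}, but for the Nevanlinna--Pick analogue it obtains the FFMI for $U$ by letting $n\to\infty$ in the truncated FFMIs, using \eqref{InF}, and then follows the Kovalishina--Potapov scheme. Your identification of $\prod_{j\le n}\exp(-izE_j)$ with the truncated resolvent matrices, which you only sketch, is the moment-problem counterpart of Theorem~\ref{Factor}.

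\textbf{Sign error in part (1).} As written, parts (1)--(3) do not go through. There is also no convention left to ``fix'': $\mathcal J$, $U$ and the bottom-over-top quotient are all prescribed. Put $X_j=\bigl(P_j(0),\,-Q_j(0)\bigr)$. Then $E_j=X_j^*X_j\mathcal J$ and $X_j\mathcal J X_j^*=i\bigl(P_j(0)Q_j^*(0)-Q_j(0)P_j^*(0)\bigr)=O$, so
\[
b_j\mathcal J b_j^*-\mathcal J=2\operatorname{Im}z\,X_j^*X_j,\qquad b_j^*\mathcal J b_j-\mathcal J=2\operatorname{Im}z\,\mathcal J X_j^*X_j\mathcal J ,
\]
and both are $\ge O$ on $\mathbb C_+$.
\begin{itemize}
\item Your displayed identity has the wrong sign. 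It also attaches to $b_j^*\mathcal J b_j$ the right-hand side that belongs to $b_j\mathcal J b_j^*$.
\item Consequently $U^*\mathcal J U\ge\mathcal J$, and the inequality in (1) must read $(Ug)^*\mathcal J(Ug)\ge g^*\mathcal J g$ with $g=\binom{\phi}{\psi}$. With your ``$\le$'' you can conclude neither $\operatorname{Im}w\ge O$ nor anything about $\alpha\phi+\beta\psi$.
\item For invertibility, positivity ``on a nontrivial subspace'' is not enough. You need $U^*\mathcal J U>\mathcal J$ on all of $\mathbb C^{2m}$.
\item This does hold: the factors $j=0,1$ already give it, since $P_0\equiv s_0^{-1/2}$, $Q_0\equiv O$ and $Q_1\equiv\widehat H_1^{-1/2}s_0$ is invertible.
\item Now suppose $(\alpha\phi+\beta\psi)h=0$. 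The rank condition gives $gh\ne 0$, and $Ugh=\binom{0}{*}$. Hence $0=(Ugh)^*\mathcal J(Ugh)>h^*g^*\mathcal J g h\ge 0$, a contradiction.
\item So $\alpha\phi+\beta\psi$ is invertible at every point of $\mathbb C_+\setminus\mathcal D_{\phi\psi}$, which is what \eqref{npFull} needs, not merely off a discrete set.
\item The points of $\mathcal D_{\phi\psi}$ are then removable because $\operatorname{Im}w\ge O$ nearby. It is not because $w$ is bounded there, which you have not shown.
\end{itemize}

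\textbf{Orientation error in parts (2) and (3).} Since $U\binom{\phi}{\psi}=\binom{I}{w}(\alpha\phi+\beta\psi)$, every $\binom{w}{I}$ in (2) and (3) must be $\binom{I}{w}$.
\begin{itemize}
\item Your pair $U^{-1}\binom{w}{I}$ produces the quotient $w^{-1}$, not $w$.
\item The inequality you set out to prove is an anti-Nevanlinna condition: $\binom{w}{I}^*\mathcal J\binom{w}{I}=-2\operatorname{Im}w$, whereas $\binom{I}{w}^*\mathcal J\binom{I}{w}=2\operatorname{Im}w$.
\item After this correction, (2) and (3) work as you describe. The limit in (2) uses $U_n^{-1}\to U^{-1}$, which follows from Theorem~\ref{t33}.
\end{itemize}

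\textbf{Membership $w\in\mathcal F$ in part (1).} Do not rely on ``the nested balls keep the expansions''; pointwise limits do not transfer asymptotic expansions by themselves. Use the FMI directly. First,
\[
\binom{I}{w}^*U^{-*}\mathcal J U^{-1}\binom{I}{w}=(\alpha\phi+\beta\psi)^{-*}\,g^*\mathcal J g\,(\alpha\phi+\beta\psi)^{-1}\ge O .
\]
Second, $U_n^{-*}\mathcal J U_n^{-1}\ge U^{-*}\mathcal J U^{-1}$, by the same argument as \eqref{InF}. Together these show that $w$ satisfies every truncated FMI, hence solves every truncated problem. Theorem~\ref{HN} then gives $w\in\mathcal F$, and no approximants $w_n$ are needed.
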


Theorems~\ref{BPP}, \ref{BPRM}, and~\ref{tN2}, stated above, are fundamental results in the theory of the matrix Hamburger moment problem. The matrix Hamburger moment problem is equivalent to a certain interpolation problem for matrix Nevanlinna functions. Another example of such an interpolation problem is the matrix Nevanlinna--Pick interpolation problem.

In the remainder of this paper, we establish analogues of Theorems~\ref{BPP}, \ref{BPRM}, and~\ref{tN2} for the matrix Nevanlinna--Pick interpolation problem.

\section{The Matrix Nevanlinna–Pick Interpolation Problem}

In the \emph{matrix Nevanlinna--Pick interpolation problem}, one seeks to describe all Nevanlinna matrix functions \(w \in \mathcal{R}_m\) that satisfy the interpolation conditions
\begin{equation}
w(z_j) = w_j \qquad \text{for all } j \in \mathbb{N}.
\label{NP}
\end{equation}
Here \(\{z_j\}_{j=1}^\infty \subset \mathbb{C}_+\) is a sequence of distinct interpolation nodes, and
\(
{w_j}_{j=1}^\infty \subset \mathbb{C}^{m \times m}
\)
is the corresponding sequence of interpolation values.

We denote the sequences of interpolation nodes appearing in~\eqref{NP} and their complex conjugates by
\begin{equation*}
\mathcal{Z} = \{z_j\}_{j=1}^\infty \subset \mathbb{C}_+,
\qquad
\overline{\mathcal{Z}} = \{\overline{z}_j\}_{j=1}^\infty \subset \mathbb{C}_- .
\end{equation*}
Let \( \mathcal{F} \) denote the class of all Nevanlinna matrix functions
\( w \in \mathcal{R}_m \) satisfying the interpolation conditions~\eqref{NP}.

The matrix Nevanlinna--Pick interpolation problem and its various extensions have been extensively studied
(see, for instance, \cite{Akh}, \cite{D00}, \cite{D1}, \cite{D5}, \cite{D2}, \cite{D4}, \cite{DR}, \cite{PE}, \cite{KKY}, \cite{Ko}, \cite{KoPo}).
A comprehensive overview of the current state of the theory of interpolation problems for Nevanlinna functions
and their generalizations is presented in the monograph~\cite{Ar1}.

Alongside the full interpolation problem~\eqref{NP}, we also consider a sequence of truncated problems. 
For each natural number \(n\), the \emph{truncated matrix Nevanlinna--Pick interpolation problem} seeks to describe 
all Nevanlinna matrix functions \( w \in \mathcal{R}_m \) satisfying the first \(n\) interpolation conditions:
\begin{equation}
    w(z_j) = w_j, \qquad j = 1,\dots, n.
    \label{NPT}
\end{equation}
We denote the finite sequences of interpolation nodes appearing in~\eqref{NPT} and their complex conjugates by
\begin{equation*}
    \mathcal{Z}_n = \{z_j\}_{j=1}^n \subset \mathbb{C}_+, 
    \qquad 
    \overline{\mathcal{Z}}_n = \{\overline{z}_j\}_{j=1}^n \subset \mathbb{C}_- .
\end{equation*}
Let \( \mathcal{F}_n \) denote the class of all Nevanlinna matrix functions \( w \in \mathcal{R}_m \) 
that satisfy the interpolation conditions~\eqref{NPT}.

For any natural number \( n \), we introduce the following block matrices:
\begin{equation}
    T_n = 
    \begin{pmatrix}
        z_1^{-1}I_m &        &        \\
                    & \ddots &        \\
                    &        & z_n^{-1}I_m
    \end{pmatrix}, \
    v_n =
    \begin{pmatrix}
        I_m \\ \vdots \\ I_m
    \end{pmatrix} \in \mathbb{C}^{nm \times m},\
    u_n =
    \begin{pmatrix}
        w_1 \\ \vdots \\ w_n
    \end{pmatrix}, \label{BMtrun1}
\end{equation}
\begin{equation}    
    \mathbf{P}_n =
    T_n^{-1}
    \begin{pmatrix}
        \dfrac{w_1 - w_1^*}{z_1 - \bar{z}_1} & \cdots & \dfrac{w_1 - w_n^*}{z_1 - \bar{z}_n} \\
        \vdots & \ddots & \vdots \\
        \dfrac{w_n - w_1^*}{z_n - \bar{z}_1} & \cdots & \dfrac{w_n - w_n^*}{z_n - \bar{z}_n}
    \end{pmatrix}
    T_n^{-*}, \
    R_{T_n}(z) = (I - z T_n)^{-1}. \label{BMtrun2}
\end{equation}
Finally, we define
\begin{equation*}
\widetilde{v}_n = 
\begin{cases}
I_m, & \text{if } n = 1, \\[1mm]
\bigl( O_{m}\ \ldots\ O_{m}\ I_m \bigr) \in \mathbb{C}^{m \times nm}, & \text{if } n > 1.
\end{cases}
\end{equation*}

It is straightforward to verify the following \emph{fundamental identity}:
\begin{equation*}
    T_n \mathbf{P}_n - \mathbf{P}_n T_n^{*} = v_n u_n^{*} - u_n v_n^{*}.
\end{equation*}
This identity plays a central role in the theory of interpolation problems
(see, for example, \cite{Ko}, \cite{KoPo}, \cite{Sah}).

The block matrices \( \mathbf{P}_n \) are called the \emph{Pick matrices}.
The non-negativity condition \( \mathbf{P}_n \ge O_{nm \times nm} \) is necessary and sufficient for the existence of at least one solution to problem~\eqref{NPT}.
Throughout this paper, we restrict our attention to the \emph{completely indeterminate} case, characterized by
\begin{equation}\label{TNPci}
    \mathbf{P}_n > O_{nm \times nm}.
\end{equation}
This assumption excludes degenerate cases of problem~\eqref{NPT} (see, for instance, \cite{Ko,KoPo,Sah}).

\begin{defn}
Let
\[
U_n : \mathbb{C} \setminus \overline{\mathcal{Z}}_n \to \mathbb{C}^{2m \times 2m}
\]
be the matrix function defined by
\begin{equation}
U_n(z)
=
I
-
iz\,(v_n\ u_n)^*\,R_{T_n}^*(\overline{z})\,
\mathbf{P}_n^{-1}\,
(v_n\ u_n)\,\mathcal{J},
\label{RezTrun}
\end{equation}
where the block matrices appearing on the right-hand side of~\eqref{RezTrun} are defined in~\eqref{J}, \eqref{BMtrun1}, and~\eqref{BMtrun2}.

The matrix function \( U_n \) is called the \emph{resolvent matrix} of the completely indeterminate truncated Nevanlinna--Pick problem~\eqref{NPT}.
\end{defn}

As shown in~\cite{D22}, the matrix \( U_n(z) \) is invertible for all
\( z \in \mathbb{C} \setminus (\mathcal{Z}_n \cup \overline{\mathcal{Z}}_n) \) and satisfies the identity
\[
U_n^{-1}(z) = \mathcal{J}\, U_n^*(\overline{z})\, \mathcal{J}.
\]

\begin{lem}\label{FFMIt}
Assume that the truncated problem~\eqref{NPT} is completely indeterminate, and let the resolvent matrix \( U_n \) be defined by~\eqref{RezTrun}.
Then a Nevanlinna matrix function \( w \in \mathcal{R}_m \) solves~\eqref{NPT} if and only if it satisfies the \emph{Factorized Fundamental Matrix Inequality} (FFMI) of V.\,P.~Potapov:
\begin{equation}\label{FMI(f)}
  \binom{I}{w(z)}^*\ 
  \frac{U_n^{-*}(z)\,\mathcal{J}\,U_n^{-1}(z)}{i(\overline{z} - z)}\
 \binom{I}{w(z)}
  \geq O
  \qquad \text{for all } z \in \mathbb{C}_+ \setminus \mathcal{Z}_n.
\end{equation}
\end{lem}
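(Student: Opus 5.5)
The plan is to pass from the factorized inequality \eqref{FMI(f)} to the classical (unfactorized) Potapov Fundamental Matrix Inequality for the truncated problem \eqref{NPT}, and then to show that this inequality is equivalent to the interpolation conditions. The link between the two is the \(\mathcal{J}\)-form identity for the resolvent matrix,
\[
\frac{\mathcal{J}-U_n^{-*}(z)\,\mathcal{J}\,U_n^{-1}(z)}{i(\overline{z}-z)}
=\begin{pmatrix} v_n^*\\ u_n^*\end{pmatrix} R_{T_n}^*(z)\,\mathbf{P}_n^{-1}R_{T_n}(z)\,\begin{pmatrix} v_n & u_n\end{pmatrix}\,\mathcal J\text{-adjusted},
\]
valid for \(z\in\mathbb C_+\setminus\mathcal Z_n\). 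The precise form of the right-hand side is fixed by computing directly from \eqref{RezTrun}. We use \(U_n^{-1}(z)=\mathcal J U_n^*(\overline z)\mathcal J\), the resolvent identity \(R_{T}(z)-R_T(\zeta)=(z-\zeta)R_T(z)T R_T(\zeta)\), and the fundamental identity \(T_n\mathbf P_n-\mathbf P_nT_n^*=v_nu_n^*-u_nv_n^*\). I would first establish the identity in the form
\[
\mathcal J-U_n(z)\mathcal J U_n^*(\zeta)=-i(z-\overline{\zeta})\,(v_n\ u_n)^*R_{T_n}^*(\overline z)\mathbf P_n^{-1}R_{T_n}(\overline\zeta)(v_n\ u_n).
\]
This is the routine but lengthy algebraic step. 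I would then invert it to obtain the corresponding identity for \(U_n^{-*}\mathcal J U_n^{-1}\), checking signs carefully against the definition of \(\mathcal J\).

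Substituting this identity into \eqref{FMI(f)} and taking a Schur complement with respect to the positive definite block \(\mathbf P_n\) shows that \eqref{FMI(f)} is equivalent to the block inequality
\[
\begin{pmatrix}\mathbf P_n & K_n(z)\\ K_n^*(z) & \dfrac{w(z)-w^*(z)}{z-\overline z}\end{pmatrix}\ge O,\qquad z\in\mathbb C_+\setminus\mathcal Z_n .
\]
Here \(K_n(z)=T_n^{-1}\bigl(\frac{w_k-w^*(z)}{z_k-\overline z}\bigr)_{k=1}^n\), up to the normalization by \(T_n\). The term \(\binom{I}{w}^*\mathcal J\binom{I}{w}/(i(\overline z-z))\) gives the lower-right entry \((w-w^*)/(z-\overline z)\). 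The rank-type term \((v_n\ u_n)\mathcal J\binom{I}{w}\) combines with \(R_{T_n}\) to give \(K_n(z)\) once we use \(v_n w(z)-u_n\) together with \((I-zT_n)^{-1}\). Since \(\mathbf P_n>O\), this block positivity is exactly the Schur-complement statement of \eqref{FMI(f)}. This is the standard Potapov FMI for \eqref{NPT}.

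It remains to prove that a Nevanlinna function \(w\) satisfies this FMI if and only if \(w(z_j)=w_j\) for \(j\le n\).

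For necessity, let \(w\) solve \eqref{NPT}. Write the Nevanlinna representation \(w(z)=a+bz+\int\bigl(\frac1{t-z}-\frac t{1+t^2}\bigr)\sigma(dt)\). The block matrix above is then the Gram matrix of the functions \(\frac{1}{t-z_k}\), \(k\le n\), and \(\frac1{t-z}\) in \(L^2(\sigma)\), plus the contribution of \(b\ge O\). This uses the interpolation conditions to replace \(w(z_k)\) by \(w_k\). It is therefore nonnegative.

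For sufficiency, let \(z\to z_k\) in the block inequality. The lower-right entry stays bounded, while the kernel entries \(\frac{w_k-w^*(z)}{z_k-\overline z}\) tend to \(\frac{w_k-w^*(z_k)}{z_k-\overline z_k}\). Take the \(2\times2\) principal block for the indices \(k\) and \(z\) near \(z_k\). Its determinant-type (Schur) condition forces
\[
\Bigl(\tfrac{w_k-w^*(z)}{z_k-\overline z}-\tfrac{w(z)-w^*(z)}{z-\overline z}\Bigr)=O(|z-z_k|)
\]
in a way that yields \(w(z)-w_k\to O\). Equivalently, the FMI at nearby points implies \((w(z)-w_k)^*(\cdots)^{-1}(w(z)-w_k)\le C|z-z_k|^2\)-type bounds, and by continuity \(w(z_k)=w_k\). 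A cleaner route is to note that the left side of \eqref{FMI(f)} has a pole at \(z_k\) coming from \(R_{T_n}(z)=(I-zT_n)^{-1}\) unless \(\binom{I}{w(z_k)}\) annihilates the residue. Nonnegativity near the pole forces the residue term to vanish, and that vanishing is exactly \(w_k-w(z_k)=O\).

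I expect this last step, extracting the interpolation conditions from nonnegativity near the nodes \(z_k\), to be the main obstacle. I would handle it through the residue argument: write the quadratic form as a term with coefficient \(\frac1{|z-z_k|^2}\) plus bounded terms, so that positivity forces the singular coefficient, a nonnegative form in \(w(z)-w_k\), to tend to zero.
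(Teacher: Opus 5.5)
Your overall plan is the standard argument to which the paper defers; it gives no proof of its own. The plan is to reduce \eqref{FMI(f)} to an unfactorized Potapov inequality via the $\mathcal J$-form identity and a Schur complement in $\mathbf P_n$. Necessity then comes from the Nevanlinna integral representation, and sufficiency from the behaviour at the nodes.

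However, the central algebraic step is wrong as you state it. With $C=(v_n\ u_n)$ the fundamental identity reads $T_n\mathbf P_n-\mathbf P_nT_n^*=iC\mathcal JC^*$, and a direct computation from \eqref{RezTrun} gives
\[
\mathcal J-U_n(z)\mathcal J U_n^*(\zeta)=i(z-\overline\zeta)\,C^*R_{T_n}^*(\overline z)\,\mathbf P_n^{-1}R_{T_n}(\overline\zeta)\,C .
\]
Your sign is reversed: $U_n$ is $\mathcal J$-expansive in $\mathbb C_+$, not $\mathcal J$-contractive. Using $U_n^{-1}(z)=\mathcal JU_n^*(\overline z)\mathcal J$ and $C\mathcal J\binom{I}{w}=-i(v_nw-u_n)$, this yields
\[
\binom{I}{w(z)}^*\frac{U_n^{-*}(z)\mathcal JU_n^{-1}(z)}{i(\overline z-z)}\binom{I}{w(z)}=\frac{w(z)-w^*(z)}{z-\overline z}-F(z)^*\mathbf P_n^{-1}F(z),\qquad F(z)=R_{T_n}(z)\bigl(v_nw(z)-u_n\bigr).
\]
Hence, since $\mathbf P_n>O$, \eqref{FMI(f)} at $z$ is equivalent to
\[
\begin{pmatrix}\mathbf P_n & F(z)\\ F(z)^* & \dfrac{w(z)-w^*(z)}{z-\overline z}\end{pmatrix}\ge O .
\]
The $k$-th block of $F(z)$ is the difference quotient $z_k(w(z)-w_k)/(z_k-z)$. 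It is not the kernel entry $z_k(w_k-w^*(z))/(z_k-\overline z)$ of your $K_n(z)$. The kernel entry is regular at $z=z_k$, whereas $F$ has a pole there unless $w(z_k)=w_k$. So your block inequality with $K_n(z)$, which is the Pick matrix of the data extended by $(z,w(z))$, is a different statement. The Schur complement does not connect it to \eqref{FMI(f)}.

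This breaks your necessity argument: the Gram computation with $1/(t-z_k)$ and $1/(t-z)$ proves the kernel-form inequality, not \eqref{FMI(f)}. The repair is to show that the block matrix displayed above is itself a Gram matrix. For a solution, use $w(z_k)=w_k$ and the difference-quotient identity $(w(z)-w(\zeta))/(z-\zeta)=b+\int\sigma(dt)/((t-z)(t-\zeta))$. These give its quadratic form in $(x_1,\dots,x_n,y)$ as
\[
\int g(t)^*\sigma(dt)\,g(t)+h^*bh,\qquad g(t)=\sum_{k=1}^n\frac{\overline z_k}{t-\overline z_k}\,x_k-\frac{y}{t-z},\quad h=\sum_{k=1}^n\overline z_k x_k-y,
\]
which is non-negative; then take the Schur complement.

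For sufficiency, keep only your ``cleaner route''. It is correct precisely because of the pole in $F$: \eqref{FMI(f)} gives
\[
\lambda_{\min}(\mathbf P_n^{-1})\,\frac{|z_k|^2}{|z-z_k|^2}\,\bigl(w(z)-w_k\bigr)^*\bigl(w(z)-w_k\bigr)\le F(z)^*\mathbf P_n^{-1}F(z)\le\frac{w(z)-w^*(z)}{z-\overline z}.
\]
The right-hand side stays bounded as $z\to z_k$, so $w(z_k)=w_k$. Drop your $2\times2$ ``determinant-type'' argument: it is aimed at the kernel entry and is left vague. With these corrections the proof goes through.
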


\begin{proof}
See, for example,~\cite{D22}.
\end{proof}

\begin{thm}
Suppose that the truncated problem~\eqref{NPT} is completely indeterminate, and let the resolvent matrix \( U_n \) be given by~\eqref{RezTrun}.
Partition \( U_n \) into \( m \times m \) blocks as follows:
\[
U_n =
\begin{pmatrix}
\alpha_n & \beta_n \\
\gamma_n & \delta_n
\end{pmatrix}.
\]

Then:
\begin{enumerate}
\item Let 
\(
\begin{pmatrix}
\phi \\ \psi
\end{pmatrix}
\in \overline{\mathcal{R}}_m
\)
be a Nevanlinna pair with exceptional set
\( \mathcal{D}_{\phi\psi} \). Define the discrete set
\(
\mathcal{D} = \mathcal{D}_{\phi\psi} \cup \mathcal{Z}_n.
\)
Then the matrix function
\begin{equation}\label{npFall}
w(z) =
\begin{cases}
\dfrac{\gamma_n(z)\,\phi(z) + \delta_n(z)\,\psi(z)}
      {\alpha_n(z)\,\phi(z) + \beta_n(z)\,\psi(z)}
   &\quad \text{for each } z \in \mathbb{C}_+ \setminus \mathcal{D}, \\[2ex]
\displaystyle
\lim_{\substack{s \to z \\ s \in \mathcal{U}^*(z)}}
\dfrac{\gamma_n(s)\,\phi(s) + \delta_n(s)\,\psi(s)}
      {\alpha_n(s)\,\phi(s) + \beta_n(s)\,\psi(s)}
   &\quad \text{for each } z \in \mathcal{D}_{\phi\psi} \setminus \mathcal{Z}_n, \\[2ex]
\qquad w_j
   & \quad \text{for each } z = z_j \in \mathcal{Z}_n,
\end{cases}
\end{equation}
is holomorphic on \( \mathbb{C}_+ \) and satisfies \( w \in \mathcal{F}_n \).
Here \( \mathcal{U}^*(z) \) denotes a punctured neighborhood of \( z \in \mathcal{D}_{\phi\psi} \setminus \mathcal{Z}_n \) such that \( \mathcal{U}^*(z)\subset\mathbb{C}_+ \) and \( \mathcal{U}^*(z)\cap\mathcal{D}=\emptyset \).

\item Conversely, every function \( w \in \mathcal{F}_n \) admits a representation of the form \eqref{npFall}
for some Nevanlinna pair \( \begin{pmatrix}\phi \\ \psi\end{pmatrix}\in \overline{\mathcal{R}}_m \).

\item Two Nevanlinna pairs yield the same function \( w \) in~\eqref{npFall} if and only if they are equivalent.
Thus, formula~\eqref{npFall} establishes a bijection between the solution set \( \mathcal{F}_n \) and the set of equivalence classes of Nevanlinna pairs \( \langle\overline{\mathcal{R}}_m\rangle \).
\end{enumerate}
\end{thm}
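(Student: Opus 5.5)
The plan is to derive all three statements from the FFMI of Lemma~\ref{FFMIt}. We also use the $\mathcal{J}$-properties of $U_n$.

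\textbf{Step 1: the $\mathcal{J}$-structure of $U_n$.} From the fundamental identity $T_n\mathbf{P}_n-\mathbf{P}_nT_n^*=v_nu_n^*-u_nv_n^*$ and the definition \eqref{RezTrun}, we get
\[
\mathcal{J}-U_n^*(z)\mathcal{J}U_n(z)=i(\overline{z}-z)\,\mathcal{J}(v_n\ u_n)^*R_{T_n}^*(z)\mathbf{P}_n^{-1}R_{T_n}(z)(v_n\ u_n)\mathcal{J}
\]
for $z\notin\mathcal{Z}_n\cup\overline{\mathcal{Z}}_n$. This is the standard Potapov computation. Since $\mathbf{P}_n>O$, the quantity $U_n^*\mathcal{J}U_n-\mathcal{J}$ has the sign needed to make $U_n$ $\mathcal{J}$-expansive in $\mathbb{C}_+$. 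Together with $U_n^{-1}(z)=\mathcal{J}U_n^*(\overline z)\mathcal{J}$, this gives $U_n^{-*}\mathcal{J}U_n^{-1}/(i(\overline z-z))\le\mathcal{J}/(i(\overline z-z))$ in $\mathbb{C}_+\setminus\mathcal{Z}_n$, with the appropriate sign convention.

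\textbf{Step 2: part (1).} Let $(\phi,\psi)$ be a Nevanlinna pair and put $\binom{p}{q}=U_n\binom{\phi}{\psi}$, so that $p=\alpha_n\phi+\beta_n\psi$ and $q=\gamma_n\phi+\delta_n\psi$. By Step~1, $\binom pq^*\mathcal{J}\binom pq\ge\binom\phi\psi^*\mathcal{J}\binom\phi\psi\ge O$ on $\mathbb{C}_+\setminus\mathcal{D}$.

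We first show that $\det p\not\equiv0$, and indeed that $p(z)$ is invertible off a discrete set. Suppose $p(z)x=0$ with $x\ne0$. Then $x^*p^*q x - x^*q^*px=0$, so $\binom pq x$ is $\mathcal{J}$-neutral. The strict inequality in Step~1, which holds because $\mathbf{P}_n>O$ and $(v_n\ u_n)$ has the right kernel structure, then forces $\binom\phi\psi x=0$. This contradicts the rank condition. This is the point I expect to be the main obstacle. The strictness only holds on the range of $(v_n\ u_n)\mathcal{J}$, so one must argue with the first block row: $v_n$ has full column rank, and the first block component of $(v_n\ u_n)\mathcal{J}\binom\phi\psi x$ is a multiple of $v_n\psi x$. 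The plan is to deduce $\psi x=0$ and then $\phi x=0$ from the neutrality. If this direct argument fails, the fallback is the standard perturbation trick: replace $(\phi,\psi)$ by $(\phi+i\varepsilon\psi,\psi)$ and use normal families.

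With $\det p\not\equiv0$ established, $w=qp^{-1}$ satisfies $\binom{I}{w}^*\mathcal{J}\binom I w\ge O$ wherever it is defined. Hence $w$ is Nevanlinna off a discrete set. Bounded Nevanlinna-type estimates then remove the isolated singularities, giving the limits in \eqref{npFall}. Moreover, $\binom Iw=U_n\binom\phi\psi p^{-1}$, so $U_n^{-1}\binom Iw=\binom\phi\psi p^{-1}$. The FFMI \eqref{FMI(f)} therefore follows from property (3) of the pair. Lemma~\ref{FFMIt} then gives $w\in\mathcal{F}_n$, where the value at $z_j$ is $w_j$ by continuity, since the FFMI forces interpolation.

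\textbf{Step 3: parts (2) and (3).} For part (2), take $w\in\mathcal{F}_n$ and define $\binom\phi\psi=U_n^{-1}\binom Iw$. This is meromorphic in $\mathbb{C}_+$, with the poles of $U_n^{-1}$ at $\mathcal{Z}_n$ forming the exceptional set. It has rank $m$ because $U_n^{-1}$ is invertible. Its $\mathcal{J}$-form is nonnegative by \eqref{FMI(f)}. Inverting gives $U_n\binom\phi\psi=\binom Iw$, so the denominator is $I$ and $w$ is recovered.

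For part (3), if two pairs satisfy $\binom{\phi_1}{\psi_1}=\binom{\phi_2}{\psi_2}Q$, then $p_1=p_2Q$ and $q_1=q_2Q$, so they give the same $w$. Conversely, if both pairs give the same $w$, then $U_n\binom{\phi_k}{\psi_k}=\binom{I}{w}p_k$ for $k=1,2$. Setting $Q=p_2^{-1}p_1\in\mathcal{Q}_m$ gives the required equivalence.
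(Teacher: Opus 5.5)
Your route is the one the paper intends: its proof only points to the standard FFMI-based scheme. Your treatment of parts (2) and (3) is sound, and so is the passage in part (1) from invertibility of $p=\alpha_n\phi+\beta_n\psi$ to $w\in\mathcal{F}_n$ via the FFMI and Lemma~\ref{FFMIt}. The gap is at the step you flagged yourself: the argument you sketch for the invertibility of $p(z)$ does not work.

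The identity displayed in Step~1 is in fact the identity for $\mathcal{J}-U_n^{-*}(z)\mathcal{J}U_n^{-1}(z)$ (compare \eqref{Jfor-}). Read literally as a statement about $U_n^*\mathcal{J}U_n$, it would make $U_n$ $\mathcal{J}$-contractive on $\mathbb{C}_+$. What you need is
\[
U_n^*(z)\mathcal{J}U_n(z)-\mathcal{J}
=2\operatorname{Im}z\;\mathcal{J}(v_n\ u_n)^*\mathbf{P}_n^{-1}R_{T_n}(\bar z)\,\mathbf{P}_n\,R_{T_n}^*(\bar z)\mathbf{P}_n^{-1}(v_n\ u_n)\mathcal{J},
\qquad z\in\mathbb{C}_+ .
\]
Since $\mathbf{P}_n>O$, this matrix is non-negative and its kernel is exactly $\ker\bigl((v_n\ u_n)\mathcal{J}\bigr)$. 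So from $p(z)x=0$ you get two facts. First, $y=\binom{\phi(z)}{\psi(z)}x$ is $\mathcal{J}$-neutral. Second, $(v_n\ u_n)\mathcal{J}y=i\bigl(u_n\phi(z)x-v_n\psi(z)x\bigr)=0$, that is, $\psi(z)x=w_j\phi(z)x$ for $j=1,\dots,n$. This does not give $\psi(z)x=0$: the $u_n\phi(z)x$ term is present, so the full column rank of $v_n$ does not help. Even if it did, neutrality $\bigl((\phi x)^*\psi x-(\psi x)^*\phi x\bigr)/i=0$ with $\psi x=0$ would impose nothing on $\phi(z)x$.

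The missing ingredient is the positivity of the interpolation data. The $(1,1)$ block of $\mathbf{P}_n$ equals $\frac{|z_1|^2}{2\operatorname{Im}z_1}\cdot\frac{w_1-w_1^*}{i}$, so $\mathbf{P}_n>O$ forces $\frac{w_1-w_1^*}{i}>O$. Substituting $\psi(z)x=w_1\phi(z)x$ into the neutrality relation gives $(\phi(z)x)^*\frac{w_1-w_1^*}{i}(\phi(z)x)=0$. Hence $\phi(z)x=0$, then $\psi(z)x=0$, which contradicts the rank condition on the pair.

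This argument works pointwise at every $z\in\mathbb{C}_+\setminus\mathcal{D}_{\phi\psi}$, including the nodes, so no perturbation argument is needed. Your fallback would not help anyway: replacing the pair by $(\phi+i\varepsilon\psi,\psi)$ lowers the $\mathcal{J}$-form by $2\varepsilon\psi^*\psi$, so the perturbed pair need not be a Nevanlinna pair.
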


\begin{proof}
The proof is based on the FFMI~\eqref{FMI(f)} and follows the same scheme as in~\cite{Ko}, \cite{KoPo}, and \cite{Sah}.
\end{proof}

\section{Multiplicative Structure of the Resolvent Matrix}

The main goal of this section is to represent the resolvent matrix \eqref{RezTrun} as a product of Blaschke–Potapov factors.

For each $j\ge 2$, we use the following block decompositions of the matrices \eqref{BMtrun1} and \eqref{BMtrun2}:
\begin{align*}
T_j &= \begin{pmatrix}
T_{j-1} & O_{(j-1)m \times m} \\
O_{m \times (j-1)m} & \widehat{T}_j
\end{pmatrix}, \quad
R_{T_j}(z) = \begin{pmatrix}
R_{T_{j-1}}(z) & O_{(j-1)m \times m} \\
O_{m \times (j-1)m} & R_{\widehat{T}_j}(z)
\end{pmatrix}, \nonumber \\
&\qquad\qquad\qquad
v_j = \begin{pmatrix}
v_{j-1} \\
I_m
\end{pmatrix}, \quad
u_j = \begin{pmatrix}
u_{j-1} \\
w_j
\end{pmatrix}.
\end{align*}
Let $\widehat{T}_1 = z_1^{-1} I_m$. Then, for every $j \ge 1$, we have
\begin{equation}\label{TRt1}
  \widehat{T}_j = z_j^{-1} I_m, \qquad 
  R_{\widehat{T}_j}(z)
  = (I_m - z \widehat{T}_j)^{-1}
  = (1 - z z_j^{-1})^{-1} I_m.
\end{equation}
Next, consider the following block decomposition of the Pick matrix $\mathbf{P}_j$:
\begin{equation*}
\mathbf{P}_j = 
\begin{pmatrix}
\mathbf{P}_{j-1} & B_j \\
B_j^* & C_j
\end{pmatrix}
\qquad \text{for each } j > 1,
\end{equation*}
where $C_j \in \mathbb{C}^{m \times m}$.

Define
\begin{equation}\label{Scom}
\widehat{K}_1 = \mathbf{P}_1, \qquad 
\widehat{K}_j = C_j - B_j^* \mathbf{P}_{j-1}^{-1} B_j
\qquad \text{for each } j > 1.
\end{equation}
A direct computation shows that
\begin{equation}\label{defKhat1}
\mathbf{P}_j = 
\begin{pmatrix}
I & O \\
B_j^* \mathbf{P}_{j-1}^{-1} & I
\end{pmatrix}
\begin{pmatrix}
\mathbf{P}_{j-1} & O \\
O & \widehat{K}_j
\end{pmatrix}
\begin{pmatrix}
I & \mathbf{P}_{j-1}^{-1} B_j \\
O & I
\end{pmatrix}\quad \text{for each } j > 1.
\end{equation}
Combining this identity with \eqref{TNPci} and \eqref{Scom}, we conclude that
\begin{equation}\label{KjHatPos}
\widehat{K}_j > O_{m \times m} \qquad \text{for all } j \in \mathbb{N}.
\end{equation}

\begin{defn}
The \emph{Blaschke--Potapov factors} are defined by
\begin{align*}
&b_1(z) = I - i z\,
(v_1\ u_1)^*
R_{\widehat T_1}^*(\bar z)\,
\widehat K_1^{-1}\,
(v_1\ u_1)\mathcal J, \\
&\text{and, for each}\ j > 1,\nonumber\\ 
&b_j(z) = I - i z\,
(v_j\ u_j)^*
\begin{pmatrix}
-\mathbf P_{j-1}^{-1} B_j \\
I
\end{pmatrix}
R_{\widehat T_j}^*(\bar z)\,
\widehat K_j^{-1}\,
\begin{pmatrix}
- B_j^* \mathbf P_{j-1}^{-1} & I
\end{pmatrix}
(v_j\ u_j)\mathcal J.
\end{align*}
\end{defn}
We define two infinite sequences of matrices
\(\{\widehat{v}_j\}_{j=1}^{\infty} \subset \mathbb{C}^{m\times m}\) and
\(\{\widehat{u}_j\}_{j=1}^{\infty} \subset \mathbb{C}^{m\times m}\) by
\begin{align}
&(\widehat{v}_1\  \widehat{u}_1)= ({v}_1\  {u}_1),\nonumber \\
&(\widehat{v}_j\  \widehat{u}_j)=
\begin{pmatrix}
-B_j^* \mathbf{P}_{j-1}^{-1} & I
\end{pmatrix}
(v_j\  u_j)
\qquad \text{for each } j>1.\qquad
\label{uvhat}
\end{align}
Then the Blaschke--Potapov factors can be written as
\begin{equation}\label{BPfM}
b_j(z)=I-iz\,
(\widehat{v}_j\  \widehat{u}_j)^*
R_{\widehat{T}_j}^*(\bar z)
\widehat{K}_j^{-1}
(\widehat{v}_j\  \widehat{u}_j)\mathcal{J},
\qquad
\text{for each } j\ge1.
\end{equation}
\begin{thm}\label{Factor}
Consider the Nevanlinna--Pick problem \eqref{NP}, and suppose that
\eqref{TNPci} holds for every $n\ge1$.
Let $(U_n)$ be the sequence of resolvent matrices defined by
\eqref{RezTrun}, and let $(b_j)$ be the sequence of Blaschke--Potapov
factors given by \eqref{BPfM}. Then, for every $n\ge1$ and every
\(
z\in\mathbb{C}\setminus\bigl(\mathcal{Z}_n\cup\overline{\mathcal{Z}}_n\bigr),
\)
we have
\begin{equation}\label{FAC}
U_n(z)=b_1(z)b_2(z)\cdots b_n(z).
\end{equation}
Moreover, each factor in \eqref{FAC} is invertible. Hence,
\begin{equation*}
U_n^{-1}(z)
=
b_n^{-1}(z)b_{n-1}^{-1}(z)\cdots b_1^{-1}(z).
\end{equation*}
\end{thm}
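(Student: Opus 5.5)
We argue by induction on $n$, proving the one-step identity
\[
U_n(z)=U_{n-1}(z)\,b_n(z),\qquad n\ge 2,
\]
together with $U_1=b_1$. The base case is immediate: $\mathbf P_1=\widehat K_1$, $T_1=\widehat T_1$ and $(\widehat v_1\ \widehat u_1)=(v_1\ u_1)$, so \eqref{RezTrun} with $n=1$ coincides with \eqref{BPfM} with $j=1$.

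For the inductive step we invert the factorization \eqref{defKhat1} to obtain the block formula
\[
\mathbf P_n^{-1}=\begin{pmatrix}\mathbf P_{n-1}^{-1}&O\\ O&O\end{pmatrix}
+\begin{pmatrix}-\mathbf P_{n-1}^{-1}B_n\\ I\end{pmatrix}\widehat K_n^{-1}\begin{pmatrix}-B_n^*\mathbf P_{n-1}^{-1}& I\end{pmatrix}.
\]
We substitute this into \eqref{RezTrun}. Because $R_{T_n}^*(\bar z)$ is block diagonal, the first summand contributes exactly $U_{n-1}(z)-I$. Thus
\[
U_n(z)=U_{n-1}(z)-iz\,F_n(z)^*\widehat K_n^{-1}(\widehat v_n\ \widehat u_n)\mathcal J,
\]
where
\[
F_n(z)=\begin{pmatrix}-B_n^*\mathbf P_{n-1}^{-1}& I\end{pmatrix}R_{T_n}(\bar z)(v_n\ u_n).
\]
Comparing with the expansion
\[
U_{n-1}b_n=U_{n-1}-iz\,U_{n-1}(\widehat v_n\ \widehat u_n)^*R^*_{\widehat T_n}(\bar z)\widehat K_n^{-1}(\widehat v_n\ \widehat u_n)\mathcal J,
\]
it suffices to prove
\[
U_{n-1}(z)(\widehat v_n\ \widehat u_n)^*R^*_{\widehat T_n}(\bar z)=F_n(z)^*,
\]
or equivalently, after taking adjoints,
\[
R_{\widehat T_n}(\bar z)(\widehat v_n\ \widehat u_n)U_{n-1}^*(z)=\begin{pmatrix}-B_n^*\mathbf P_{n-1}^{-1}R_{T_{n-1}}(\bar z)(v_{n-1}\ u_{n-1})& R_{\widehat T_n}(\bar z)(v_n\ u_n)_{\rm last}\end{pmatrix}\text{-combination}.
\]

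This verification is the main obstacle. The tool is the fundamental identity $T_n\mathbf P_n-\mathbf P_nT_n^*=v_nu_n^*-u_nv_n^*=(v_n\ u_n)\,i\mathcal J\,(v_n\ u_n)^*$, read off blockwise. Its $(1,2)$ block gives the Sylvester-type relation
\[
T_{n-1}B_n-B_n\widehat T_n^*=(v_{n-1}\ u_{n-1})\,i\mathcal J\,(v_n\ u_n)_{\rm last}^*.
\]
Here $\widehat T_n$ is scalar, so this determines $B_n$ explicitly via resolvents. Writing $U_{n-1}^*(z)=I+i\bar z\,\mathcal J(v_{n-1}\ u_{n-1})^*\mathbf P_{n-1}^{-1}R_{T_{n-1}}(\bar z)(v_{n-1}\ u_{n-1})$, we substitute $(\widehat v_n\ \widehat u_n)=(v_n\ u_n)_{\rm last}-B_n^*\mathbf P_{n-1}^{-1}(v_{n-1}\ u_{n-1})$. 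We then apply the resolvent identity
\[
R_{T}(\bar z)-R_{\widehat T}(\bar z)=\bar z\,R_T(\bar z)\,(T-\widehat T)\,R_{\widehat T}(\bar z),
\]
used in the mixed form $R_{T_{n-1}}(\bar z)(T_{n-1}X-X\widehat T_n)R_{\widehat T_n}(\bar z)$, together with the $(1,1)$ block identity $T_{n-1}\mathbf P_{n-1}-\mathbf P_{n-1}T_{n-1}^*=(v_{n-1}\ u_{n-1})\,i\mathcal J\,(v_{n-1}\ u_{n-1})^*$. These should collapse the cross terms; one has to be careful about where $T$ versus $T^*$ enters, since $\widehat T_n^*=\bar z_n^{-1}$.

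If this direct route gets messy, the fallback is the standard Potapov argument. Both sides, $U_n$ and $U_{n-1}b_n$, are rational in $z$, equal $I$ at $z=0$, and satisfy the $\mathcal J$-identity
\[
\mathcal J-U(z)\mathcal JU^*(w)=i(z-\bar w)\,(\cdots)^*\,\mathbf P_n^{-1}(\cdots).
\]
A uniqueness argument for the kernel then yields equality.

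Finally, invertibility of each factor follows from the identity $b_j^{-1}(z)=\mathcal J b_j^*(\bar z)\mathcal J$. This is proved exactly as for $U_n$ (since $b_j$ is itself the resolvent matrix of a one-point problem with Pick matrix $\widehat K_j>O$ by \eqref{KjHatPos}), and it holds for $z\notin\{z_j,\bar z_j\}$. The formula for $U_n^{-1}$ then follows from \eqref{FAC}.
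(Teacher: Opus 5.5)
Your approach is correct in outline, and it is the Schur-complement induction that the paper's definition of $b_j$ (through $\begin{pmatrix}-\mathbf P_{j-1}^{-1}B_j\\ I\end{pmatrix}$ and $\widehat K_j^{-1}$) is built for. The paper gives no argument for this theorem and simply cites \cite[Thm.~2.2]{D22}, so there is no different route to compare against. Several of your steps are correct as written: the base case, the formula for $\mathbf P_n^{-1}$, the identification of the first summand with $U_{n-1}(z)-I$, and the reduction to
\[
U_{n-1}(z)\,(\widehat v_n\ \widehat u_n)^*R^*_{\widehat T_n}(\bar z)=F_n(z)^* .
\]

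The gap is that this identity is the entire content of the inductive step, and you leave it at ``these should collapse the cross terms.'' Your ``adjoint'' display is also garbled: $F_n(z)$ is the \emph{sum} $-B_n^*\mathbf P_{n-1}^{-1}R_{T_{n-1}}(\bar z)(v_{n-1}\ u_{n-1})+R_{\widehat T_n}(\bar z)(I\ w_n)$, not a block row.

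The identity does close in a few lines with the tools you name. Put $X=(v_{n-1}\ u_{n-1})$ and $x=(I\ w_n)$, so that $(\widehat v_n\ \widehat u_n)=x-B_n^*\mathbf P_{n-1}^{-1}X$. The $(1,1)$ and $(1,2)$ blocks of the fundamental identity read
\[
X\mathcal JX^*=-i(T_{n-1}\mathbf P_{n-1}-\mathbf P_{n-1}T_{n-1}^*),\qquad X\mathcal Jx^*=-i(T_{n-1}B_n-\bar z_n^{-1}B_n).
\]
The $T_{n-1}B_n$ terms cancel, which gives
\[
\mathbf P_{n-1}^{-1}X\mathcal J(\widehat v_n\ \widehat u_n)^*=i\bigl(\bar z_n^{-1}I-T_{n-1}^*\bigr)\mathbf P_{n-1}^{-1}B_n .
\]
Substitute this into $U_{n-1}(z)=I-izX^*R^*_{T_{n-1}}(\bar z)\mathbf P_{n-1}^{-1}X\mathcal J$. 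Using $I+zR^*_{T_{n-1}}(\bar z)T_{n-1}^*=R^*_{T_{n-1}}(\bar z)$, you get
\[
U_{n-1}(z)(\widehat v_n\ \widehat u_n)^*=x^*-(1-z\bar z_n^{-1})X^*R^*_{T_{n-1}}(\bar z)\mathbf P_{n-1}^{-1}B_n .
\]
Multiplying on the right by $R^*_{\widehat T_n}(\bar z)=(1-z\bar z_n^{-1})^{-1}I$ gives exactly $F_n(z)^*$. So the direct route works and no fallback is needed. Your kernel-uniqueness fallback would not have avoided this computation anyway: checking that $U_{n-1}b_n$ has the same $\mathcal J$-kernel as $U_n$ requires essentially the same identity.

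For invertibility, calling $b_j$ ``the resolvent matrix of a one-point problem'' presupposes the one-point identity $\widehat T_j\widehat K_j-\widehat K_j\widehat T_j^*=\widehat v_j\widehat u_j^*-\widehat u_j\widehat v_j^*$. You do not verify it; it follows from the block form of the fundamental identity, and the paper takes it from \cite[Eq.~(2.41)]{D22}. It is simpler to use what you have just proved, together with the invertibility of $U_n(z)$ off $\mathcal Z_n\cup\overline{\mathcal Z}_n$ that the paper records from \cite{D22}. Then $b_1=U_1$, and $b_j(z)=U_{j-1}^{-1}(z)U_j(z)$ is invertible for $2\le j\le n$ and $z\notin\mathcal Z_n\cup\overline{\mathcal Z}_n$. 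The formula for $U_n^{-1}$ then follows.
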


\begin{proof}
The proof is given in~\cite[Thm.~2.2]{D22}.
\end{proof}
Using the above notation, we define the sequence of matrices
\begin{equation}\label{PPj}
    \mathcal{P}_j = 
    \frac{|z_j|^2}{2\,\operatorname{Im} z_j}
    \begin{pmatrix}
        \widehat{v}_j^* \\
        \widehat{u}_j^*
    \end{pmatrix}
    \widehat{K}_j^{-1}
    (\widehat{v}_j, \widehat{u}_j)\mathcal{J}
    \qquad \text{for all } j \geq 1.
\end{equation}
In addition, we define the rational matrix functions of the first and second kind (see \cite{D22,D25}) by
\begin{eqnarray}
P_1(z) &=& \widehat{K}_1^{-1/2} R_{T_1}(z) v_1, \label{1kindP1} \\
P_j(z) &=& \widehat{K}_j^{-1/2}(-B_j^* \mathbf{P}_{j-1}^{-1}, I)\, R_{T_j}(z) v_j 
\qquad \text{for each } j > 1, \label{1kind} \\
Q_1(z) &=& -\widehat{K}_1^{-1/2} R_{T_1}(z) u_1, \label{2kindQ1} \\
Q_j(z) &=& -\widehat{K}_j^{-1/2}(-B_j^* \mathbf{P}_{j-1}^{-1}, I)\, R_{T_j}(z) u_j 
\qquad \text{for each } j > 1. \label{2kind}
\end{eqnarray}

\begin{lem}
Let the Nevanlinna--Pick problem \eqref{NP} be considered under assumptions \eqref{TNPci}, which are assumed to hold for all \(n \geq 1\).
Assume furthermore that the sequence of matrices \((\mathcal{P}_j)\) is defined by \eqref{PPj}; 
the sequence of Blaschke--Potapov factors \((b_j)\) is defined by \eqref{BPfM}; and
the sequences of rational matrix functions of the first and second kind, \((P_j)\) and \((Q_j)\), are defined by \eqref{1kindP1}, \eqref{1kind}, \eqref{2kindQ1}, and \eqref{2kind}, respectively.
Then the following statements hold:
\begin{enumerate}

\item For each \(j \geq 1\), the matrices \(\mathcal{P}_j\) satisfy
\begin{equation}\label{Pj2}
    \mathcal{P}_j^2 = -\mathcal{P}_j, \qquad 
    \mathcal{P}_j \mathcal{J} \geq O, \qquad \operatorname{tr}\mathcal{P}_j=-m,
    \qquad \operatorname{tr}\bigl (\mathcal{P}_j\mathcal{J}\bigr )>0.
\end{equation}

\item For each \(j \geq 1\), 
the matrices \(\mathcal{P}_j\) can be represented in terms of \((P_j)\) and \((Q_j)\):
\begin{equation}\label{PPjPQ}
    \mathcal{P}_j = 
\frac{|z_j|^2}{2\operatorname{Im} z_j}
\begin{pmatrix}
        P_j^*(0)P_j(0) &  -P_j^*(0)Q_j(0)       \\
       -Q_j^*(0)P_j(0) & Q_j^*(0)Q_j(0)
    \end{pmatrix}\mathcal{J}.
\end{equation}

\item For each \(j \geq 1\), the  Blaschke--Potapov matrix factors \eqref{BPfM} admit the representation
\begin{equation}\label{BP2}
    b_j(z)=I + \mathcal{P}_j - \zeta_j(z)\mathcal{P}_j,
\end{equation}
where \(\zeta_j(z)\) is the scalar Blaschke factor
\begin{equation}\label{bf}
    \zeta_j(z) = \frac{\overline{z}_j}{z_j}  \frac{z - z_j}{z - \overline{z}_j}.
\end{equation}
\end{enumerate}
\end{lem}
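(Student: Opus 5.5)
The whole lemma reduces to one scalar identity, obtained from the fundamental identity by compressing it onto the $j$-th Schur complement. Set $W_j=(\widehat v_j\ \widehat u_j)\in\mathbb{C}^{m\times 2m}$ and write $c_j=\frac{|z_j|^2}{2\operatorname{Im}z_j}$. Then $\mathcal{P}_j=c_j W_j^*\widehat K_j^{-1}W_j\mathcal J$, and therefore
\[
\mathcal{P}_j^2=c_j^2\,W_j^*\widehat K_j^{-1}\bigl(W_j\mathcal J W_j^*\bigr)\widehat K_j^{-1}W_j\mathcal J .
\]
The task is to show that the middle factor satisfies $W_j\mathcal J W_j^*=-c_j^{-1}\widehat K_j$.

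To prove this, multiply the fundamental identity $T_j\mathbf P_j-\mathbf P_jT_j^*=v_ju_j^*-u_jv_j^*$ on the left by $L_j=(-B_j^*\mathbf P_{j-1}^{-1}\ \ I)$ and on the right by $L_j^*$. By \eqref{uvhat} the right-hand side becomes $\widehat v_j\widehat u_j^*-\widehat u_j\widehat v_j^*$. On the left, $T_j$ is block diagonal with last block $z_j^{-1}I$, and the factorization \eqref{defKhat1} gives $L_j\mathbf P_j=(O\ \ \widehat K_j)$. Hence $L_jT_j\mathbf P_jL_j^*=z_j^{-1}\widehat K_j$, because $L_jT_j=(\ast\ \ z_j^{-1}I)$ while $\mathbf P_jL_j^*=\binom{O}{\widehat K_j}$. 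The left-hand side therefore collapses to $(z_j^{-1}-\bar z_j^{-1})\widehat K_j=\frac{2i\operatorname{Im}z_j}{|z_j|^2}\,\widehat K_j$ (up to the sign bookkeeping, which I will check). Since $W_j\mathcal JW_j^*=i(\widehat u_j\widehat v_j^*-\widehat v_j\widehat u_j^*)$, we get $W_j\mathcal JW_j^*=-c_j^{-1}\widehat K_j$. For $j=1$ the same computation applies directly, with $L_1=I$ and $\widehat K_1=\mathbf P_1$.

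The first claim then follows. Substituting gives $\mathcal P_j^2=-\mathcal P_j$. Also $\mathcal P_j\mathcal J=c_jW_j^*\widehat K_j^{-1}W_j\ge O$, since $c_j>0$ and $\widehat K_j>O$ by \eqref{KjHatPos}. For the trace, $\operatorname{tr}\mathcal P_j=c_j\operatorname{tr}(\widehat K_j^{-1}W_j\mathcal JW_j^*)=-\operatorname{tr}I_m=-m$. Finally $\operatorname{tr}(\mathcal P_j\mathcal J)=c_j\operatorname{tr}(\widehat K_j^{-1}W_jW_j^*)>0$, because $W_j$ has $\widehat v_j=I_m$ as a block and so $W_jW_j^*\ge I$. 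To see $\widehat v_j=I_m$, note that $\widehat v_j=-B_j^*\mathbf P_{j-1}^{-1}v_{j-1}+I$. This is not obviously $I$; however, $W_jW_j^*\ne O$ already holds because $W_j$ has rank $m$, which follows from $W_j\mathcal JW_j^*$ being invertible. So $W_jW_j^*\ge O$ and is nonzero, and $\widehat K_j^{-1}>O$ then gives a positive trace.

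For the second claim, evaluate \eqref{1kind} and \eqref{2kind} at $z=0$. Since $R_{T_j}(0)=I$, we get $P_j(0)=\widehat K_j^{-1/2}\widehat v_j$ and $Q_j(0)=-\widehat K_j^{-1/2}\widehat u_j$. Hence $W_j=\widehat K_j^{1/2}(P_j(0)\ \ -Q_j(0))$, and substituting this into \eqref{PPj} gives \eqref{PPjPQ} immediately.

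For the third claim, in \eqref{BPfM} we have $R_{\widehat T_j}^*(\bar z)=(1-z/\bar z_j)^{-1}I$ by \eqref{TRt1}. Thus $b_j(z)=I-\frac{iz}{1-z/\bar z_j}\,c_j^{-1}\mathcal P_j$. It remains to verify the scalar identity $1-\zeta_j(z)=-\frac{iz\,\bar z_j}{c_j(\bar z_j-z)}$. This follows from $1-\zeta_j=\frac{z(z_j-\bar z_j)}{z_j(z-\bar z_j)}$ together with $c_j^{-1}=\frac{2\operatorname{Im}z_j}{|z_j|^2}$, as a routine computation. The main obstacle is keeping the signs consistent in the compressed fundamental identity, together with the conjugation convention $R^*(\bar z)$; I would fix both by checking the scalar case $m=1$, $j=1$ first.
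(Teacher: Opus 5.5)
Your proposal is correct, and its skeleton matches the paper's proof. You get $\mathcal P_j^2=-\mathcal P_j$ from the compressed identity $\widehat v_j\widehat u_j^*-\widehat u_j\widehat v_j^*=(z_j^{-1}-\bar z_j^{-1})\widehat K_j$, non-negativity from $\widehat K_j>O$, part 2 from $R_{T_j}(0)=I$, and part 3 by scalar algebra using \eqref{TRt1}. The paper does the same.

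The difference is in what you prove versus what the paper cites. The paper takes three facts from outside:
\begin{itemize}
\item the compressed identity, from \cite[Eq.~(2.41)]{D22};
\item $\operatorname{tr}\mathcal P_j=-m$, from \cite[Eq.~(2.17)]{DR};
\item the nonsingularity of $\widehat v_j$ and $\widehat u_j$, from \cite[Eq.~(2.11)]{DR}, which it uses to get $\operatorname{tr}(\mathcal P_j\mathcal J)>0$.
\end{itemize}
You instead derive the compressed identity by multiplying the fundamental identity on the left by $L_j$ and on the right by $L_j^*$, using $\mathbf P_jL_j^*=\bigl(\begin{smallmatrix}O\\ \widehat K_j\end{smallmatrix}\bigr)$. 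You then extract both trace statements from the single relation $W_j\mathcal JW_j^*=-c_j^{-1}\widehat K_j$. The trace identity follows by cyclicity. Positivity of $\operatorname{tr}(\mathcal P_j\mathcal J)$ follows because that relation forces $\operatorname{rank}W_j=m$, so $W_jW_j^*>O$.

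Your route is self-contained, whereas the paper's is shorter and records the stronger fact that $\widehat v_j$ and $\widehat u_j$ are individually invertible. That stronger fact is not needed here. An even shorter argument for the last claim is the one the paper itself uses later for $\widetilde{\mathcal P}$: $\mathcal P_j\mathcal J\ge O$ and $\mathcal P_j\ne O$, since its trace is $-m$.

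Two bookkeeping fixes are needed:
\begin{itemize}
\item The sign: $z_j^{-1}-\bar z_j^{-1}=(\bar z_j-z_j)/|z_j|^2=-2i\operatorname{Im}z_j/|z_j|^2$, not $+2i\operatorname{Im}z_j/|z_j|^2$. With the sign as you wrote it, your formula $W_j\mathcal JW_j^*=i(\widehat u_j\widehat v_j^*-\widehat v_j\widehat u_j^*)$ would give $+c_j^{-1}\widehat K_j$, and hence $\mathcal P_j^2=+\mathcal P_j$. The correct sign gives exactly the $-c_j^{-1}\widehat K_j$ you claim.
\item Drop the retracted assertion $\widehat v_j=I_m$. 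It holds only for $j=1$, and the rank argument that replaces it is valid.
\end{itemize}
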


\begin{proof}
1. We compute
\begin{align*}
  \mathcal{P}_j^2 & =  \left (  \frac{|z_j|^2 }{ 2\operatorname{Im }z_j}\right )^2
   \begin{pmatrix}
        \widehat{v}_j^* \\
        \widehat{u}_j^*
    \end{pmatrix}
   \widehat{K}_j^{-1}
    (\widehat{v}_j, \widehat{u}_j) \mathcal{J}
    \begin{pmatrix}
        \widehat{v}_j^* \\
        \widehat{u}_j^*
    \end{pmatrix}
    \widehat{K}_j^{-1}
    (\widehat{v}_j, \widehat{u}_j) \mathcal{J} \\   
& = -i \left (  \frac{|z_j|^2 }{ 2\operatorname{Im }z_j}\right )^2
 \begin{pmatrix}
        \widehat{v}_j^* \\
        \widehat{u}_j^*
    \end{pmatrix}
   \widehat{K}_j^{-1}
    (\widehat{v}_j \widehat{u}_j^*  - \widehat{u}_j \widehat{v}_j^*)
    \widehat{K}_j^{-1}
    (\widehat{v}_j, \widehat{u}_j) \mathcal{J} \\
& = -i \left (  \frac{|z_j|^2 }{ 2\operatorname{Im }z_j}\right )^2
 \begin{pmatrix}
        \widehat{v}_j^* \\
        \widehat{u}_j^*
    \end{pmatrix}
   \widehat{K}_j^{-1}
    (\widehat{T}_j \widehat{K}_j-\widehat{K}_j \widehat{T}_j^*)
    \widehat{K}_j^{-1}
    (\widehat{v}_j, \widehat{u}_j) \mathcal{J} \\ 
& = -i \left (  \frac{|z_j|^2 }{ 2\operatorname{Im }z_j}\right )^2
 \begin{pmatrix}
        \widehat{v}_j^* \\
        \widehat{u}_j^*
    \end{pmatrix}
   \widehat{K}_j^{-1}
    (z_j^{-1} I_m\cdot \widehat{K}_j -\widehat{K}_j\cdot \overline{z}_j^{-1} I_m)
    \widehat{K}_j^{-1}
    (\widehat{v}_j, \widehat{u}_j) \mathcal{J} \\ 
& = - \left (  \frac{|z_j|^2 }{ 2\operatorname{Im }z_j}\right )^2
 \begin{pmatrix}
        \widehat{v}_j^* \\
        \widehat{u}_j^*
    \end{pmatrix}
   \widehat{K}_j^{-1}
    \frac{i(\bar z_j-z_j)}{|z_j|^2}\widehat{K}_j
    \widehat{K}_j^{-1}
    (\widehat{v}_j, \widehat{u}_j) \mathcal{J} \\ 
& = -  \frac{|z_j|^2 }{ 2\operatorname{Im }z_j}
 \begin{pmatrix}
        \widehat{v}_j^* \\
        \widehat{u}_j^*
    \end{pmatrix}
    \widehat{K}_j^{-1}
    (\widehat{v}_j, \widehat{u}_j) \mathcal{J}
 = -\mathcal{P}_j.
\end{align*}
In the preceding computation, the third equality follows from
\[
\widehat{v}_j \widehat{u}_j^* - \widehat{u}_j \widehat{v}_j^*
= \widehat{T}_j \widehat{K}_j - \widehat{K}_j \widehat{T}_j^*,\qquad \widehat{T}_j=z_j^{-1} I_m,
\]
(see \cite[Eq.~(2.41)]{D22}), whereas the fourth equality follows from \eqref{TRt1}.
This proves the first identity in~\eqref{Pj2}.
The non-negativity of the matrix \(\mathcal{P}_j \mathcal{J}\) follows immediately from \eqref{KjHatPos} and \eqref{PPj}.
 It was shown in~\cite[Eq.~(2.17)]{DR} that
\(
\operatorname{tr}\mathcal{P}_j = -m
\).

Next, for all \(j \geq 1\), we have
\begin{align*}
   \operatorname{tr}\bigl ( \mathcal{P}_j \mathcal{J}\bigr )&= 
   \operatorname{tr}\left ( \frac{|z_j|^2}{2\operatorname{Im} z_j}
    \begin{pmatrix}
        \widehat{v}_j^* \\
        \widehat{u}_j^*
    \end{pmatrix}
    \widehat{K}_j^{-1}
    (\widehat{v}_j, \widehat{u}_j)\right )\\
    &=\frac{|z_j|^2}{2\operatorname{Im} z_j}\left ( 
   \operatorname{tr} \bigl ( \widehat{v}_j^*\widehat{K}_j^{-1}\widehat{v}_j\bigr )+
   \operatorname{tr} \bigl ( \widehat{u}_j^*\widehat{K}_j^{-1}\widehat{u}_j\bigr )
    \right )>0.
\end{align*}
The inequality follows from \eqref{KjHatPos} and from the fact that the matrices
\(\widehat{v}_j\) and \(\widehat{u}_j\) are nonsingular  (see \cite[Eq.~(2.11)]{DR}).

2.
Combining \eqref{1kindP1}, \eqref{1kind}, \eqref{2kindQ1}, and \eqref{2kind} with \eqref{uvhat}, we obtain
\begin{equation*}
 P_j(0)= \widehat{K}_j^{-1/2}\widehat{v}_j,\qquad  
 Q_j(0)= -\widehat{K}_j^{-1/2}\widehat{u}_j
 \qquad \text{for each } j\geq 1.
\end{equation*}
It follows immediately from this and \eqref{PPj} that the representation \eqref{PPjPQ} holds.

3. We have
\begin{align*}
  b_j(z) 
  &= I - i z
     \begin{pmatrix}
        \widehat{v}_j^* \\
        \widehat{u}_j^*
     \end{pmatrix}
     R_{\widehat{T}_j}^*(\bar z)\widehat{K}_j^{-1} 
     (\widehat{v}_j \ \widehat{u}_j)\mathcal{J}  \\
  &= I - \frac{i z}{1 - z \bar z_j^{-1}}
     \begin{pmatrix}
        \widehat{v}_j^* \\
        \widehat{u}_j^*
     \end{pmatrix}\widehat{K}_j^{-1} 
     (\widehat{v}_j \ \widehat{u}_j)\mathcal{J}  \\
  &= I - \frac{i z}{1 - z \bar z_j^{-1}}
     \frac{2\operatorname{Im} z_j}{|z_j|^2}
  \left (   \frac{|z_j|^2 }{2\operatorname{Im} z_j}
     \begin{pmatrix}
        \widehat{v}_j^* \\
        \widehat{u}_j^*
     \end{pmatrix}
     \widehat{K}_j^{-1}
     (\widehat{v}_j \ \widehat{u}_j)\mathcal{J} \right ) \\
  &= I + \frac{z}{1 - z \bar z_j^{-1}}
     \frac{\bar z_j - z_j}{|z_j|^2}\mathcal{P}_j  \\
 & = I + \mathcal{P}_j +
     \left(\frac{z\bar z_j - z z_j}{z_j \bar z_j - z z_j} - 1\right)\mathcal{P}_j  \\
&= I + \mathcal{P}_j +
     \frac{z\bar z_j - z z_j - z_j \bar z_j + z z_j}{z_j \bar z_j - z z_j}
     \mathcal{P}_j  \\
  &= I + \mathcal{P}_j
     - \frac{\bar z_j}{z_j}\frac{z - z_j}{z - \bar z_j}\mathcal{P}_j \\
  &= I + \mathcal{P}_j - \zeta_j(z)\mathcal{P}_j.
\end{align*}
Here, the second equality follows from \eqref{TRt1}, and the fourth from \eqref{PPj}.
This proves \eqref{BP2}.
\end{proof}

\begin{rem}
The relation \(\mathcal{P}_j^2 = -\mathcal{P}_j\) implies that the polynomial
\(\lambda^2 + \lambda\) annihilates \(\mathcal{P}_j\).
It follows that the spectrum of \(\mathcal{P}_j\) is contained in
\(\{-1,0\}\).
Together with the identity
\(
\operatorname{tr}\mathcal{P}_j = -m,
\)
this shows that \(-1\) and \(0\) are the only eigenvalues of
\(\mathcal{P}_j\), each with multiplicity \(m\).
Therefore,
\(
\operatorname{rank}\mathcal{P}_j = m.
\)
Such matrices \(\mathcal{P}_j\) are called \emph{full-rank projectors}.
They play an important role in the theory of interpolation problems
\cite{Ko}, \cite{KoPo}.
\end{rem}

\section{Infinite Sums and Products of Matrices}

In this section, we introduce the basic definitions and state fundamental results concerning the convergence of infinite matrix sums and products.

We begin with the proofs of three auxiliary results that will be used in studying the convergence of matrix series.

Let \((X_j)\) be a sequence of matrices in
\(\mathbb{C}^{\,p\times q}\).
For each \(j\), let
\[
X_j
=
\left(x_{rs}^{(j)}\right)_{\substack{1\le r\le p\\1\le s\le q}}.
\]
Then
\[
X_j^*
=
\left(\overline{x_{sr}^{(j)}}\right)_{\substack{1\le r\le q\\1\le s\le p}}
\in
\mathbb{C}^{\,q\times p}.
\]
Direct computation gives
\begin{equation*}
X_j^*X_j
=
\left(
\sum_{k=1}^{p}
\overline{x_{kr}^{(j)}}\,x_{ks}^{(j)}
\right)_{r,s=1}^{q}.
\end{equation*}
Equivalently,
\begin{equation}\label{XsXcom}
\left(X_j^*X_j\right)_{rs}
=
\sum_{k=1}^{p}
\overline{x_{kr}^{(j)}}\,x_{ks}^{(j)},
\qquad
1\le r,s\le q.
\end{equation}
In particular, for the diagonal entries of \(X_j^*X_j\), we have
\begin{equation*}
\left(X_j^*X_j\right)_{rr}
=
\sum_{k=1}^{p}
\left|x_{kr}^{(j)}\right|^2,
\qquad
1\le r\le q.
\end{equation*}

\begin{lem}\label{lemXsX}
Let \((X_j)\) be a sequence of matrices in
\(\mathbb{C}^{\,p\times q}\).
The matrix series
\begin{equation}\label{SerXsX}
\sum_{j=1}^{\infty}X_j^*X_j
\end{equation}
converges componentwise (equivalently, with respect to any matrix norm) if and only if, for every \(r=1,\ldots,q\), the scalar series of the diagonal entries of the matrices \(X_j^*X_j\),
\begin{equation}\label{DigXsX}
\sum_{j=1}^{\infty}
\left(X_j^*X_j\right)_{rr}
=
\sum_{j=1}^{\infty}
\sum_{k=1}^{p}
\left|x_{kr}^{(j)}\right|^2,
\end{equation}
converges.
\end{lem}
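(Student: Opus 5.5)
The plan is to prove both directions directly from the entrywise formula \eqref{XsXcom}, using the Cauchy--Schwarz inequality to control the off-diagonal entries by the diagonal ones.

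\emph{Necessity.} Suppose the series \eqref{SerXsX} converges componentwise. Then in particular the scalar series formed by its diagonal entries $(r,r)$ converges. By \eqref{XsXcom} these are exactly the series \eqref{DigXsX}, so this direction is immediate.

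\emph{Sufficiency.} Assume each diagonal series \eqref{DigXsX} converges. Since its terms are non-negative, the convergence is absolute. Fix $1\le r,s\le q$. I will show that the series of $(r,s)$ entries,
\[
\sum_{j=1}^{\infty}\sum_{k=1}^{p}\overline{x_{kr}^{(j)}}\,x_{ks}^{(j)},
\]
converges absolutely, which gives componentwise convergence. The key bound is
\[
\bigl|\overline{x_{kr}^{(j)}}\,x_{ks}^{(j)}\bigr|\le \tfrac12\bigl(|x_{kr}^{(j)}|^2+|x_{ks}^{(j)}|^2\bigr).
\]
Summing this over $k$ and $j$ bounds the absolute series by half the sum of the $r$-th and $s$-th diagonal series, both of which are finite. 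Alternatively, one can apply Cauchy--Schwarz to the double index $(j,k)$ to get
\[
\sum_{j,k}|\overline{x_{kr}^{(j)}}x_{ks}^{(j)}|\le\Bigl(\sum_{j,k}|x_{kr}^{(j)}|^2\Bigr)^{1/2}\Bigl(\sum_{j,k}|x_{ks}^{(j)}|^2\Bigr)^{1/2}<\infty.
\]

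Finally, since all norms on $\mathbb{C}^{q\times q}$ are equivalent, componentwise convergence is the same as convergence in any matrix norm. There is no real obstacle here. The only point that needs care is noting that the off-diagonal entries are dominated via the arithmetic--geometric mean (or Cauchy--Schwarz) inequality, so absolute convergence follows from the non-negativity of the diagonal terms.
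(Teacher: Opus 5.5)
Your proof is correct and is essentially the paper's argument. The paper also controls the off-diagonal entries by Cauchy--Schwarz, treating $(x_{kr}^{(j)})_j$ and $(x_{ks}^{(j)})_j$ as $\ell^2$ sequences for each fixed $k$ and then interchanging the finite sum over $k$ with the sum over $j$. You instead apply the bound directly over the double index $(j,k)$, or via $|ab|\le\frac12(|a|^2+|b|^2)$.
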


\begin{proof}
The necessity follows immediately from the convergence of each matrix entry.

Conversely, suppose that the scalar series
\eqref{DigXsX} converges.
Then, for every
\[
1\le r\le q,\qquad
1\le s\le p,
\]
the scalar series
\[
\sum_{j=1}^{\infty}
\left|x_{sr}^{(j)}\right|^2
\]
also converges.

Hence, for every fixed \(k,r,s\),  each of the scalar sequences
\[
\left(x_{kr}^{(j)}\right)_{j=1}^{\infty}
\quad\text{and}\quad
\left(x_{ks}^{(j)}\right)_{j=1}^{\infty}
\]
belong to \(\ell^2(\mathbb{C})\).
Therefore,
\[
\sum_{j=1}^{\infty}
\overline{x_{kr}^{(j)}}\,x_{ks}^{(j)}
\]
is the inner product of these two sequences and hence converges absolutely.

Thus, the order of summation may be interchanged:
\[
\sum_{k=1}^{p}
\sum_{j=1}^{\infty}
\overline{x_{kr}^{(j)}}\,x_{ks}^{(j)}
=
\sum_{j=1}^{\infty}
\sum_{k=1}^{p}
\overline{x_{kr}^{(j)}}\,x_{ks}^{(j)},
\qquad
1\le r,s\le q.
\]
Using \eqref{XsXcom}, we obtain
\[
\sum_{j=1}^{\infty}
\left(X_j^*X_j\right)_{rs}
=
\sum_{j=1}^{\infty}
\sum_{k=1}^{p}
\overline{x_{kr}^{(j)}}\,x_{ks}^{(j)}
=
\sum_{k=1}^{p}
\sum_{j=1}^{\infty}
\overline{x_{kr}^{(j)}}\,x_{ks}^{(j)},
\qquad
1\le r,s\le q.
\]
Hence, every entry of the matrix series \eqref{SerXsX} has a finite limit, and, consequently,  the series converges componentwise.
\end{proof}

\begin{cor}\label{tr X}
Let \((X_j)\) be a sequence of matrices in
\(\mathbb{C}^{\,p\times q}\), and let \((\alpha_j)\) be a sequence of positive numbers.
Then the matrix series
\begin{equation}\label{1234}
\sum_{j=1}^{\infty}\alpha_j X_j^*X_j
\end{equation}
converges componentwise if and only if the scalar series
\[
\sum_{j=1}^{\infty}\alpha_j\operatorname{tr}(X_j^*X_j)
\]
converges.
\end{cor}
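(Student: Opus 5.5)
The plan is to reduce Corollary~\ref{tr X} to Lemma~\ref{lemXsX} by absorbing the positive weights into the matrices. Put
\[
Y_j=\sqrt{\alpha_j}\,X_j\in\mathbb{C}^{\,p\times q},
\]
which is legitimate because $\alpha_j>0$. Then $Y_j^*Y_j=\alpha_j X_j^*X_j$, so the series \eqref{1234} is exactly $\sum_j Y_j^*Y_j$. By Lemma~\ref{lemXsX}, this series converges componentwise if and only if, for every $r=1,\dots,q$, the scalar series $\sum_j (Y_j^*Y_j)_{rr}$ converges.

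It remains to show that convergence of these $q$ diagonal series is equivalent to convergence of the single series $\sum_j\alpha_j\operatorname{tr}(X_j^*X_j)$. The key observation is that each diagonal term is nonnegative:
\[
(Y_j^*Y_j)_{rr}=\alpha_j\sum_{k=1}^p|x_{kr}^{(j)}|^2\ge 0 .
\]
The trace is the finite sum of these terms:
\[
\alpha_j\operatorname{tr}(X_j^*X_j)=\sum_{r=1}^q (Y_j^*Y_j)_{rr}.
\]
If every diagonal series converges, then so does their finite sum, which is the trace series. Conversely, if the trace series converges, then for each fixed $r$ the partial sums of $\sum_j (Y_j^*Y_j)_{rr}$ are nondecreasing. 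They are also bounded above by the partial sums of the trace series, so each diagonal series converges.

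Combining the two equivalences proves the corollary. There is no real obstacle here; the only point requiring care is the nonnegativity of the terms, which is what allows convergence of a sum of series to imply convergence of each summand by comparison.
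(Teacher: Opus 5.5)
Your proof is correct and follows the paper's argument: apply Lemma~\ref{lemXsX} to $\sqrt{\alpha_j}\,X_j$, then use the nonnegativity of the diagonal entries of $\alpha_j X_j^*X_j$ to pass between the $q$ diagonal series and the single trace series. The only difference is that you write out explicitly the monotone-comparison step that the paper states in one line.
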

\begin{proof}
Applying Lemma~\ref{lemXsX} to the sequence of matrices
\(\bigl(\sqrt{\alpha_j}X_j\bigr)\), we obtain that the convergence of the
scalar series formed from the diagonal entries of the matrix series
\eqref{1234} is a necessary and sufficient condition for the
componentwise convergence of the series \eqref{1234}.

It remains to use the non-negativity of the diagonal entries of the
matrices \(\alpha_j X_j^*X_j\).
\end{proof}

\begin{cor}\label{AjBi}
Let $(A_j)$ and $(B_j)$ be two sequences of matrices in
$\mathbb{C}^{m\times m}$, and set
\[
X_j=(A_j\; B_j),
\]
for all $j\geq 1$. Then the matrix series
\[
\sum_{j=1}^{\infty}X_j^*X_j
=
\sum_{j=1}^{\infty}
\begin{pmatrix}
A_j^*A_j & A_j^*B_j\\
B_j^*A_j & B_j^*B_j
\end{pmatrix}
\]
converges if and only if both matrix series
\[
\sum_{j=1}^{\infty}A_j^*A_j
\qquad\text{and}\qquad
\sum_{j=1}^{\infty}B_j^*B_j
\]
converge.
\end{cor}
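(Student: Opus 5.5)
The plan is to reduce Corollary~\ref{AjBi} directly to Lemma~\ref{lemXsX}, or equivalently to Corollary~\ref{tr X} with all $\alpha_j=1$. Here $X_j=(A_j\;B_j)\in\mathbb{C}^{m\times 2m}$, so $p=m$ and $q=2m$. First we verify the block identity
\[
X_j^*X_j=\begin{pmatrix}A_j^*\\ B_j^*\end{pmatrix}(A_j\;B_j)=\begin{pmatrix}A_j^*A_j & A_j^*B_j\\ B_j^*A_j & B_j^*B_j\end{pmatrix}.
\]
From it, the diagonal entries of $X_j^*X_j$ with indices $r=1,\dots,m$ are exactly the diagonal entries of $A_j^*A_j$. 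Those with indices $r=m+1,\dots,2m$ are exactly the diagonal entries of $B_j^*B_j$.

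\emph{Necessity.} If $\sum_j X_j^*X_j$ converges componentwise, then in particular its upper-left and lower-right $m\times m$ blocks converge. These are precisely $\sum_j A_j^*A_j$ and $\sum_j B_j^*B_j$.

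\emph{Sufficiency.} Assume $\sum_j A_j^*A_j$ and $\sum_j B_j^*B_j$ converge. Then every scalar series of diagonal entries of these blocks converges. By the observation above, this means that for each $r=1,\dots,2m$ the series $\sum_j (X_j^*X_j)_{rr}$ converges. Lemma~\ref{lemXsX}, applied with $p=m$ and $q=2m$, then gives componentwise convergence of the full series $\sum_j X_j^*X_j$. This includes the off-diagonal blocks $\sum_j A_j^*B_j$ and $\sum_j B_j^*A_j$, whose convergence came from the Cauchy--Schwarz ($\ell^2$ inner product) argument in that lemma. Alternatively, one may apply Lemma~\ref{lemXsX} separately to $(A_j)$ and $(B_j)$ and recombine.

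There is no real obstacle. The only point needing care is the convergence of the off-diagonal blocks in the sufficiency direction, and that is already handled by Lemma~\ref{lemXsX}. What remains is only the index bookkeeping that identifies the diagonal of $X_j^*X_j$ with the diagonals of $A_j^*A_j$ and $B_j^*B_j$.
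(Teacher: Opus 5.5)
Your proof is correct and is essentially the paper's argument. The paper combines the identity $\operatorname{tr}(X_j^*X_j)=\operatorname{tr}(A_j^*A_j)+\operatorname{tr}(B_j^*B_j)$ with Corollary~\ref{tr X}, which is just the summed-over-$r$ form of your diagonal-entry bookkeeping with Lemma~\ref{lemXsX}.
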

\begin{proof}
The proof follows immediately from the  identity
\[
\operatorname{tr}
\begin{pmatrix}
A_j^*A_j & A_j^*B_j\\
B_j^*A_j & B_j^*B_j
\end{pmatrix}
=
\operatorname{tr}(A_j^*A_j)
+
\operatorname{tr}(B_j^*B_j)
\]
and Corollary~\ref{tr X}.
\end{proof}

We now recall the basic definitions and results concerning the convergence of infinite matrix products.

Let $\{V_j\}_{j=1}^n$ be a finite sequence of $2m \times 2m$ matrices. The \emph{right} and \emph{left} products are defined, respectively, by
\[
\overrightarrow{\prod}_{j=1}^{n} V_j
=
V_1 V_2 \cdots V_n,
\qquad
\overleftarrow{\prod}_{j=1}^{n} V_j
=
V_n \cdots V_2 V_1.
\]

Let $\{V_j\}_{j=1}^\infty$ be an infinite sequence of $2m \times 2m$ matrices, which we abbreviate as $(V_j)$. The corresponding infinite right and left products are formally written as
\[
\overrightarrow{\prod}_{j=1}^{\infty} V_j
=
V_1 V_2 \cdots V_n \cdots,
\qquad
\overleftarrow{\prod}_{j=1}^{\infty} V_j
=
\cdots V_n \cdots V_2 V_1.
\]

The right infinite product is said to \emph{converge} to $P$ if the following conditions are satisfied:
\begin{enumerate}
    \item Each $V_j$ is invertible.
    \item The sequence of partial products
    \(
    P_n = \overrightarrow{\prod}_{j=1}^{n} V_j
    \)
    converges to $P$ as $n \to \infty$.
    \item The limit $P$ is invertible.
\end{enumerate}

If any of these conditions fails to hold, the product is said to \emph{diverge}. When all three conditions are satisfied, the matrix $P$ is called the right infinite product of $(V_j)$ and is denoted by
\[
P = \overrightarrow{\prod}_{j=1}^{\infty} V_j.
\]
The convergence of the left infinite product is defined analogously.

\begin{thm}\label{t33}
Let $(V_j)$ be a sequence of $2m \times 2m$ matrices such that the series
\[
\sum_{j=1}^{\infty} V_j
\]
converges. Then the infinite products
\[
\overrightarrow{\prod}_{j=1}^{\infty} \exp(V_j),
\qquad
\overleftarrow{\prod}_{j=1}^{\infty} \exp(-V_j)
\]
converge, and their limits are mutually inverse.
\end{thm}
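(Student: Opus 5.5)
The plan is to prove convergence of the right product through a Cauchy criterion for its partial products, and then obtain the left product for free by inversion. I will use the operator norm $\|\cdot\|$ throughout. Set
\[
S_n=\sum_{j=1}^n V_j,\qquad
\Pi_n=\overrightarrow{\prod}_{j=1}^{n}\exp(V_j),\qquad
\Lambda_n=\overleftarrow{\prod}_{j=1}^{n}\exp(-V_j).
\]

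\textbf{Step 1 (invertibility and mutual inverses at finite level).} Each factor $\exp(V_j)$ is invertible with inverse $\exp(-V_j)$. Hence $\Lambda_n\Pi_n=\Pi_n\Lambda_n=I$ for every $n$. So once both sequences are shown to converge, their limits are automatically mutually inverse, and in particular both limits are invertible. This disposes of conditions (1) and (3) in the definition of a convergent product. It therefore suffices to prove that $(\Pi_n)$ and $(\Lambda_n)$ are Cauchy; I will treat $(\Pi_n)$, and the argument for $(\Lambda_n)$ is symmetric.

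\textbf{Step 2 (reduction to block products).} For $n<N$ put $\Pi_{n,N}=\exp(V_{n+1})\cdots\exp(V_N)$. Then
\[
\Pi_N-\Pi_n=\Pi_n\bigl(\Pi_{n,N}-I\bigr).
\]
So two things must be shown: (a) $\sup_n\|\Pi_n\|<\infty$, and (b) $\|\Pi_{n,N}-I\|\to0$ uniformly in $N>n$ as $n\to\infty$. Part (a) follows from (b): fix $n_0$ with $\|\Pi_{n_0,N}-I\|\le 1$ for all $N>n_0$, which gives $\|\Pi_N\|\le 2\|\Pi_{n_0}\|$. For (b), I would expand each factor as $\exp(V_j)=I+V_j+\rho_j$ with $\|\rho_j\|\le \|V_j\|^2e^{\|V_j\|}$. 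The first-order part of $\Pi_{n,N}-I$ is $S_N-S_n$, which is small by the Cauchy criterion for $\sum V_j$. The second-order part is $\sum_{n<j<k\le N}V_jV_k$ plus the diagonal remainders. For the cross terms I would use Abel summation with the tail partial sums $D_k=S_k-S_n$, which are uniformly small; this gives
\[
\sum_{n<j<k\le N}V_jV_k=\sum_{n<k\le N}D_{k-1}V_k=\sum_{n<k\le N}D_{k-1}(D_k-D_{k-1}).
\]
Higher-order terms would be organised in the same iterated-Abel fashion.

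\textbf{Main obstacle.} Step 2(b) is the crux, and it is exactly where mere (as opposed to absolute) convergence bites. The sums $\sum\|V_j\|^2$ and the Abel sums $\sum_k D_{k-1}(D_k-D_{k-1})$ are controlled by smallness of the $D_k$ only if the $D_k$ have bounded variation, that is, only if $\sum\|V_j\|<\infty$. For noncommuting, conditionally convergent sequences these second-order terms need not be small: the commutator contributions $\tfrac12[D_{k-1},V_k]$ can accumulate.

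\textbf{First attempt at the obstacle.} I would symmetrise the cross terms, writing $D_{k-1}V_k=\tfrac12\bigl(D_k^2-D_{k-1}^2\bigr)-\tfrac12V_k^2+\tfrac12[D_{k-1},V_k]$. The squares then telescope to something small, and what remains is to show that the commutator and $V_k^2$ sums are small. If that cannot be achieved in general, I would restrict to the situation actually used later in the paper. There $V_j$ is a scalar multiple of $\mathcal{P}_j$, and by \eqref{PPjPQ} $\mathcal{P}_j\mathcal{J}\ge O$. Convergence of the series of the non-negative matrices $\mathcal{P}_j\mathcal{J}$ then forces $\sum\|\mathcal{P}_j\|<\infty$ by Corollary~\ref{tr X}, since the norm is bounded by the trace. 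In that setting Step 2 closes with the estimate
\[
\|\Pi_{n,N}-I\|\le \exp\Bigl(\sum_{j>n}\|V_j\|\Bigr)-1\to0.
\]
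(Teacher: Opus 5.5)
\textbf{The gap.} Your Step 1 and the reduction in Step 2 are sound. Part (b), however, cannot be established under the stated hypothesis, because for a merely convergent series of non-commuting matrices the statement is false. Your symmetrisation attempt is therefore bound to fail, and it should be replaced by a counterexample. Take $m=1$ (pad with zero blocks for larger $m$), $A=\begin{pmatrix}0&1\\0&0\end{pmatrix}$, $B=\begin{pmatrix}0&0\\1&0\end{pmatrix}$, $\varepsilon_k=k^{-1/2}$, and let the sequence run in blocks of four:
\[
V_{4k-3}=\varepsilon_kA,\qquad V_{4k-2}=-\varepsilon_kB,\qquad V_{4k-1}=-\varepsilon_kA,\qquad V_{4k}=\varepsilon_kB .
\]
Inside the $k$-th block the partial sums are $\varepsilon_kA$, $\varepsilon_k(A-B)$, $-\varepsilon_kB$, $O$. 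Hence $\sum_jV_j$ converges (to $O$), although $\sum_j\|V_j\|=\infty$. Since $A^2=B^2=O$, each block contributes exactly
\[
e^{\varepsilon A}e^{-\varepsilon B}e^{-\varepsilon A}e^{\varepsilon B}
=\begin{pmatrix}1-\varepsilon^2+\varepsilon^4&\varepsilon^3\\ \varepsilon^3&1+\varepsilon^2\end{pmatrix},
\qquad \varepsilon=\varepsilon_k ,
\]
which is a matrix with positive entries. For such matrices the $(2,2)$ entry of a product dominates the product of the $(2,2)$ entries. So the $(2,2)$ entry of $\overrightarrow{\prod}_{j=1}^{4n}\exp(V_j)$ is at least $\prod_{k=1}^{n}(1+1/k)=n+1$. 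The partial products are unbounded, and so are their inverses (they have determinant $1$), so neither product converges.

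In your own decomposition this is exactly the commutator term. Here $V_j^2=O$, and the squares $D_j^2$ telescope to $O$ over each block. The commutator sum $\tfrac12\sum[D_{j-1},V_j]$ over the $k$-th block equals $\varepsilon_k^2\operatorname{diag}(-1,1)$, and these add up to a divergent $\sum_k\varepsilon_k^2$.

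\textbf{What is true, and what the paper needs.} The paper gives no argument here, only a reference. The version that is true is the one you fall back on. If $\sum_j\|V_j\|<\infty$, then $\|\Pi_n\|\le\exp(\sum_j\|V_j\|)$, and your estimate $\|\Pi_{n,N}-I\|\le\exp(\sum_{j>n}\|V_j\|)-1$ makes $(\Pi_n)$, and likewise $(\Lambda_n)$, Cauchy; Step 1 then finishes the proof. You should state the theorem with that hypothesis, or with the hypothesis $V_j\mathcal J\ge O$ for all $j$, under which convergence and absolute convergence coincide.

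This covers the paper's use in the sufficiency part of Theorem~\ref{HC}, but get the details right:
\begin{itemize}
\item There $V_j=\lambda_j\widetilde{\mathcal P}_j$ with $\lambda_j=-\alpha_j(z_0)>0$, and $\widetilde{\mathcal P}_j\mathcal J\ge O$ by \eqref{PropPt}.
\item Since $\mathcal J$ is unitary, $\|V_j\|=\|V_j\mathcal J\|\le\operatorname{tr}(V_j\mathcal J)$, and the trace of the convergent series $\sum_jV_j\mathcal J$ converges.
\item What is summable is $\sum_j\lambda_j\|\widetilde{\mathcal P}_j\|$, not $\sum_j\|\mathcal P_j\|$ as you wrote. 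The latter always diverges, because $-1$ is an eigenvalue of $\mathcal P_j$ and hence $\|\mathcal P_j\|\ge1$.
\end{itemize}
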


\begin{proof}
See, e.g., \cite[Section~4.2]{Zol}.
\end{proof}

\begin{defn}
Let $\mathcal{P} \in \mathbb{C}^{2m \times 2m}$ satisfy
\(
\mathcal{P}\mathcal{J} \ge O_{2m \times 2m},
\)
and let $\lambda > 0$. Then the matrix
\[
\mathcal{R} = \exp(-\lambda\mathcal{P})
\]
is called a $\mathcal{J}$-\emph{modulus}. Moreover, its inverse
\[
\mathcal{R}^{-1} = \exp(\lambda\mathcal{P})
\]
is a $(-\mathcal{J})$-\emph{modulus}. Such matrices will be called $(-\mathcal{J})$-\emph{moduli}.
\end{defn}

This notion was introduced by V.~P.~Potapov \cite{P1,P3}; see also \cite[Section~2.13]{Ar1} and \cite[Section~1.4]{DFK}.

The following theorem is a direct consequence of V.~P.~Potapov's fundamental theorem on the convergence of discrete products of moduli and its continuous analogue; see \cite{P2,P3,P4} and \cite[Section~4.3]{Zol}.

\begin{thm}\label{Pmod}
Let $(\mathcal{P}_j)$ be a sequence of $2m \times 2m$ matrices such that
\[
\mathcal{P}_j \mathcal{J} \ge O,
\qquad j \ge 1,
\]
and let $(\lambda_j)$ be a sequence of positive real numbers.

For each $n \ge 1$, consider the products of $(-\mathcal{J})$-\emph{moduli}
\begin{equation*}
U_n = \overrightarrow{\prod}_{j=1}^n \exp(\lambda_j \mathcal{P}_j),
\end{equation*}
and the products of $\mathcal{J}$-\emph{moduli}
\begin{equation*}
U_n^{-1} = \overleftarrow{\prod}_{j=1}^n \exp(-\lambda_j \mathcal{P}_j).
\end{equation*}
Assume that there exists a constant $C>0$ such that, for all $n \ge 1$,
\begin{equation*}
\|U_n \mathcal{J} U_n^*\| \le C,
\qquad
\|U_n^{-*} \mathcal{J} U_n^{-1}\| \le C.
\end{equation*}
Then the series
\begin{equation*}
\sum_{j=1}^{\infty} \lambda_j \mathcal{P}_j
\end{equation*}
converges. Moreover, the infinite products
\begin{equation*}
\overrightarrow{\prod}_{j=1}^{\infty} \exp(\lambda_j \mathcal{P}_j),
\qquad
\overleftarrow{\prod}_{j=1}^{\infty} \exp(-\lambda_j \mathcal{P}_j)
\end{equation*}
converge, and their limits are mutually inverse.
\end{thm}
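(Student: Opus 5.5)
The plan is to show that the partial products $U_n$ form a monotone family in the $\mathcal{J}$-order, to turn the two hypotheses into uniform bounds on $U_n$ and $U_n^{-1}$, and to read off convergence of $\sum_j\lambda_j\mathcal{P}_j$ from a telescoping trace estimate. Once the series is known to converge, Theorem~\ref{t33}, applied with $V_j=\lambda_j\mathcal{P}_j$, gives convergence of both infinite products and the fact that their limits are mutually inverse. So everything reduces to proving that $\sum_j\lambda_j\mathcal{P}_j$ converges.

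\emph{Monotonicity.} Since $\mathcal{P}_j\mathcal{J}\ge O$, this matrix is Hermitian, so $\mathcal{J}\mathcal{P}_j^*=\mathcal{P}_j\mathcal{J}$. Put $F_j(t)=\exp(t\mathcal{P}_j)\mathcal{J}\exp(t\mathcal{P}_j^*)$. Then
\[
F_j'(t)=2\exp(t\mathcal{P}_j)\,\mathcal{P}_j\mathcal{J}\,\exp(t\mathcal{P}_j^*)\ge O .
\]
Hence $\exp(\lambda_j\mathcal{P}_j)\mathcal{J}\exp(\lambda_j\mathcal{P}_j^*)-\mathcal{J}=\Delta_j$, where
\[
\Delta_j=2\int_0^{\lambda_j}\exp(t\mathcal{P}_j)\mathcal{P}_j\mathcal{J}\exp(t\mathcal{P}_j^*)\,dt\ge O .
\]
It follows that
\[
U_{n}\mathcal{J}U_{n}^*-U_{n-1}\mathcal{J}U_{n-1}^*=U_{n-1}\Delta_nU_{n-1}^*\ge O ,
\]
so $U_n\mathcal{J}U_n^*$ is a nondecreasing sequence of Hermitian matrices. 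By the first hypothesis it is bounded above in norm, so it converges, and
\[
\sum_n\operatorname{tr}\bigl(U_{n-1}\Delta_nU_{n-1}^*\bigr)<\infty .
\]
The same argument applied to the $\mathcal{J}$-moduli gives monotonicity of $U_n^{-*}\mathcal{J}U_n^{-1}$ in the opposite direction, bounded by the second hypothesis.

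\emph{Uniform bounds on $U_n^{\pm1}$ (main obstacle).} Here I must combine the two hypotheses. The matrices $U_n$ are $\mathcal{J}$-expansive, that is, $U_n\mathcal{J}U_n^*\ge\mathcal{J}$, and therefore also $U_n^*\mathcal{J}U_n\ge\mathcal{J}$ by Potapov's lemma. First I would look for an estimate of $\|U_n\|^2$ and $\|U_n^{-1}\|^2$ in terms of $\|U_n\mathcal{J}U_n^*\|$ and $\|U_n^{-*}\mathcal{J}U_n^{-1}\|$. The route would be through the decomposition $\mathcal{J}=\Pi_+-\Pi_-$ into orthogonal projections and a block (Potapov--Ginzburg) description of $\mathcal{J}$-expansive matrices, in which the singular values of $U_n$ are controlled by those of $U_n\mathcal{J}U_n^*$. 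If this direct estimate fails, I would fall back on Potapov's multiplicative-integral argument \cite{P2,P3}, which is exactly the cited source.

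\emph{Conclusion.} With $\|U_n^{-1}\|\le M$ for all $n$, we have
\[
\operatorname{tr}\Delta_n\le M^2\operatorname{tr}\bigl(U_{n-1}\Delta_nU_{n-1}^*\bigr),
\]
so $\sum_n\operatorname{tr}\Delta_n<\infty$. It then remains to show
\[
\Delta_n\ge c\,\lambda_n\mathcal{P}_n\mathcal{J}\quad\text{or at least}\quad \operatorname{tr}\Delta_n\ge c\,\lambda_n\operatorname{tr}(\mathcal{P}_n\mathcal{J})
\]
with $c>0$ independent of $n$. I would get this by writing
\[
\exp(t\mathcal{P}_n)=\exp\bigl(-(\lambda_n-t)\mathcal{P}_n\bigr)\exp(\lambda_n\mathcal{P}_n)
\]
and using that the single factors $\exp(\pm s\mathcal{P}_n)$ for $0\le s\le\lambda_n$ are themselves bounded by the same $\mathcal{J}$-modulus estimates, as quotients of consecutive partial products. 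This yields $\sum_n\lambda_n\operatorname{tr}(\mathcal{P}_n\mathcal{J})<\infty$.

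Since each $\lambda_n\mathcal{P}_n\mathcal{J}$ is nonnegative, it can be written in the form $X_n^*X_n$. Corollary~\ref{tr X} then gives convergence of $\sum_n\lambda_n\mathcal{P}_n\mathcal{J}$, and multiplying on the right by $\mathcal{J}$ (recall $\mathcal{J}^2=I$) gives convergence of $\sum_n\lambda_n\mathcal{P}_n$.
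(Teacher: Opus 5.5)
\textbf{Verdict: there is a genuine gap in Step~2, and that step carries the whole content of the theorem.}

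Your Step~1 is correct. Step~3 is also essentially fine once bounds on $U_n^{\pm1}$ are available. One correction there: for $0<s<\lambda_n$, the factor $\exp(-s\mathcal{P}_n)$ is not a quotient of partial products. It is controlled instead by the monotonicity of $s\mapsto \exp(-s\mathcal{P}_n)\mathcal{J}\exp(-s\mathcal{P}_n)^*=\exp(-2s\mathcal{P}_n)\mathcal{J}$.

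\textbf{Why the proposed Step~2 cannot work.} No estimate of the kind you are looking for exists. First, $U_n^{-*}\mathcal{J}U_n^{-1}=(U_n\mathcal{J}U_n^*)^{-1}$. Both hypotheses are therefore unchanged under $U_n\mapsto U_nV$ for any $\mathcal{J}$-unitary $V$. Now $V_t=\operatorname{diag}(e^{t}I_m,e^{-t}I_m)$ is $\mathcal{J}$-unitary, with $V_t\mathcal{J}V_t^*=V_t^{-*}\mathcal{J}V_t^{-1}=\mathcal{J}$, yet $\|V_t\|=e^{t}$. So no Potapov--Ginzburg argument about a single $\mathcal{J}$-expansive matrix can bound $\|U_n\|$. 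You have to show that the $\mathcal{J}$-unitary part of $U_n$ stays bounded, and this must use the fact that $U_n$ is built from moduli. Deferring this to \cite{P2,P3} just reproduces the paper, which gives no argument and quotes Potapov's convergence theorem \cite{P2,P3,P4}, \cite[Section~4.3]{Zol}. So your proposal is not self-contained exactly where it matters.

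\textbf{How the gap can be closed elementarily.}

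\emph{Setup.} Put $G_n=U_n\mathcal{J}U_n^*$. Then $\mathcal{J}\le G_n$, $\|G_n\|\le C$ and $\|G_n^{-1}\|\le C$. By your Step~1, $\sum_n\|G_n-G_{n-1}\|\le\sum_n\operatorname{tr}(G_n-G_{n-1})<\infty$.

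\emph{Spectrum of $G_n\mathcal{J}$.} It lies in $[C^{-1},C]$.
\begin{itemize}
\item Real: if $G_ny=\mu\mathcal{J}y$ with $\mu\notin\mathbb{R}$, then $y^*\mathcal{J}y=y^*G_ny=0$. Hence $(G_n-\mathcal{J})y=0$, which forces $\mu=1$, a contradiction.
\item Positive: deform along $\mathcal{J}+s(G_n-\mathcal{J})$, $0\le s\le 1$. This stays invertible, because its eigenvalues are monotone in $s$ and $G_n$ has the inertia of $\mathcal{J}$.
\end{itemize}

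\emph{A Lipschitz $\mathcal{J}$-factor.} Let $M_n=(G_n\mathcal{J})^{1/2}$ be the principal root, written as a Riesz--Dunford integral over a fixed contour. Then $M_n^*=\mathcal{J}M_n\mathcal{J}$ and $M_n\mathcal{J}M_n^*=G_n$. Also $\sup_n\|M_n^{\pm1}\|<\infty$ and $\|M_n-M_{n-1}\|\le L\|G_n-G_{n-1}\|$.

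\emph{The $\mathcal{J}$-unitary part.} Set $V_n=M_n^{-1}U_n$, which is $\mathcal{J}$-unitary, and $\mathcal{S}_n=V_{n-1}\mathcal{P}_nV_{n-1}^{-1}$, which satisfies $\mathcal{S}_n\mathcal{J}\ge O$. Then
\[
G_n-G_{n-1}=M_{n-1}\bigl(\exp(2\lambda_n\mathcal{S}_n)-I\bigr)\mathcal{J}M_{n-1}^*,
\qquad
V_nV_{n-1}^{-1}=M_n^{-1}M_{n-1}\exp(\lambda_n\mathcal{S}_n).
\]

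\emph{Conclusion.} The middle factor in the first identity is nonnegative, so the first identity gives $\sum_n\|\exp(2\lambda_n\mathcal{S}_n)-I\|<\infty$. Since $\mathcal{S}_n$ has real spectrum, principal logarithms then give $\sum_n\lambda_n\|\mathcal{S}_n\|<\infty$. The second identity now yields $\sum_n\|V_nV_{n-1}^{-1}-I\|<\infty$. Hence $\sup_n\|V_n\|=\sup_n\|V_n^{-1}\|<\infty$, and
\[
\sum_n\lambda_n\|\mathcal{P}_n\|\le\sup_n\bigl(\|V_{n-1}^{-1}\|\,\|V_{n-1}\|\bigr)\sum_n\lambda_n\|\mathcal{S}_n\|<\infty .
\]
Theorem~\ref{t33} then finishes the proof, and your Step~3 is no longer needed.
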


\section{The Hamburger Criterion }

The main goal of this section is to establish a Hamburger-type criterion for the complete indeterminacy of the matrix Nevanlinna--Pick interpolation problem. Using this criterion, we derive a representation of the resolvent matrix for a completely indeterminate matrix Nevanlinna--Pick problem in the form of an infinite product of Blaschke--Potapov  factors.

Consider the matrix Nevanlinna--Pick problem with infinitely many interpolation nodes~\eqref{NP} together with the associated sequence of truncated problems~\eqref{NPT}. We assume that the conditions for complete indeterminacy of the truncated problems~\eqref{TNPci} are satisfied for all \(n \geq 1\). The nonempty sets of solutions to problems~\eqref{NP} and~\eqref{NPT} will be denoted by \(\mathcal{F}\) and \(\mathcal{F}_n\), respectively.

Let us fix a point $z_0\in \mathbb{C}_+\setminus \mathcal{Z}$ and consider the set
\begin{equation}\label{WBn}
\mathfrak{K}_n(z_0) = \{w(z_0) : w \in \mathcal{F}_n \}. 
\end{equation}
There exist matrices $r_n(z_0) \in \mathbb{C}^{m\times m}_>$, 
$\rho_n(z_0) \in \mathbb{C}^{m\times m}_>$, and 
$c_n(z_0) \in \mathbb{C}^{m\times m}$ such that \eqref{WBn} admits the representation
\begin{equation*}
\mathfrak{K}_n(z_0) = \left\{ c_n(z_0) + r_n(z_0) V \rho_n(z_0) : V^* V \leq I_m \right\}. 
\end{equation*}
The set $\mathfrak{K}_n(z_0)$ can be interpreted as a matrix ball with \emph{center} $c_n(z_0)$, 
\emph{left semi-radius} $r_n(z_0)$, and \emph{right semi-radius} $\rho_n(z_0)$.
This set is called the \emph{matrix Weyl ball} at the point $z_0$, associated with the interpolation problem~\eqref{NPT}.

The following limits exist:
\begin{equation*}
    r(z_0)=\lim_{n\to\infty} r_n(z_0),\quad
\rho(z_0)=\lim_{n\to\infty} \rho_n(z_0),\quad
   c(z_0)=\lim_{n\to\infty} c_n(z_0).
\end{equation*}

The set
\begin{equation*}
\mathfrak{K}(z_0) = \{ w(z_0) : w \in \mathcal{F} \}
\end{equation*}
can be written in the form
\begin{equation*}
\mathfrak{K}(z_0) =
\left\{
c(z_0) + r(z_0) V \rho(z_0) : V^{*}V \leq I_m
\right\}.
\end{equation*}
It is natural at this point to introduce the following terminology:
the set $\mathfrak{K}(z_0)$ is called the \emph{limit matrix Weyl ball}
corresponding to $z_0$ for the interpolation problem~\eqref{NP},
with \emph{center} $c(z_0)$, \emph{left semi-radius} $r(z_0)$, and
\emph{right semi-radius} $\rho(z_0)$.

\begin{defn}
The matrix Nevanlinna–Pick interpolation problem \eqref{NP} is said to be \emph{completely indeterminate} if
\[
 \operatorname{rank} r(z_0) = \operatorname{rank} \rho(z_0) = m.
\]
\end{defn}
It follows from S.~A.~Orlov’s theorem~\cite{Or} that, in the completely indeterminate case,
\[
 \operatorname{rank} r(z) = \operatorname{rank} \rho(z) = m
 \qquad \text{for all }  z \in \mathbb{C}_+\setminus \mathcal{Z}.
\]

\begin{defn}
A matrix \(U \in \mathbb{C}^{2m \times 2m}\) is said to be \(\mathcal{J}\)-expansive, \(\mathcal{J}\)-unitary, or \(\mathcal{J}\)-contractive if it satisfies, respectively,
\begin{equation}\label{defEUC}
U \mathcal{J} U^{*} \ge \mathcal{J}, \quad
U \mathcal{J} U^{*} = \mathcal{J}, \quad
U \mathcal{J} U^{*} \le \mathcal{J}.
\end{equation}
\end{defn}
It is well known (see, for example, \cite{Ar1}) that, in this definition,
the conditions \eqref{defEUC} can be replaced by the following equivalent ones:
\begin{equation*}
U^{*} \mathcal{J} U \ge \mathcal{J}, \quad
U^{*} \mathcal{J} U = \mathcal{J}, \quad
U^{*} \mathcal{J} U \le \mathcal{J}.
\end{equation*}
Moreover, the product of $\mathcal J$-unitary matrices is again $\mathcal J$-unitary.
Every $\mathcal{J}$-unitary matrix $U$ is invertible; moreover,
\begin{equation}\label{JUinv}
U^{-1} = \mathcal{J} U^{*} \mathcal{J},
\end{equation}
and its inverse is also $\mathcal{J}$-unitary.

In what follows, we consider only the completely indeterminate interpolation problem~\eqref{NP}.
Our immediate goal is to obtain a formula for the resolvent matrix \(U\) of the interpolation problem~\eqref{NP} as the limit of the resolvent matrices \(U_n\) corresponding to the truncated problems~\eqref{NPT}. The main difficulty is that each resolvent matrix \(U_n\) is defined only up to right multiplication by an arbitrary \(J\)-unitary matrix. Fixing this \(J\)-unitary factor is referred to as \emph{normalizing} the resolvent matrix. For the truncated problems, the normalization of the resolvent matrix may be chosen arbitrarily. However, it is a nontrivial task to choose the normalizations of the resolvent matrices \(U_n\) associated with the truncated problems~\eqref{NPT} so that they converge to the resolvent matrix \(U\) of the interpolation problem~\eqref{NP}.

We now introduce normalizations of the resolvent matrices corresponding to the truncated problems~\eqref{NPT} so that the resulting sequence converges to the resolvent matrix of the interpolation problem~\eqref{NP}.

\begin{lem}
Let $\mathcal{P}\in \mathbb{C}^{2m\times 2m}$ satisfy
\begin{equation*}
\mathcal{P}^{2}=-\mathcal{P}, 
\qquad 
\mathcal{P}\mathcal{J}\ge O_{2m\times 2m}, 
\qquad 
\operatorname{tr}\mathcal{P}=-m,
\qquad \operatorname{tr}\bigl (\mathcal{P}\mathcal{J}\bigr )>0
\end{equation*}
Then the following assertions hold.
\begin{enumerate}

\item For every $\lambda\in\mathbb{C}$,
\begin{equation}\label{Iden1}
\exp(\lambda\mathcal{P})
= I + \bigl(1-e^{-\lambda}\bigr)\mathcal{P}.
\end{equation}

\item For every real number $\varphi$, the matrix \(\exp \bigl(i\varphi \mathcal{P}\bigr)\) is \(\mathcal{J}\)-unitary:
\begin{equation}\label{Junit}
  \mathcal{J}-\exp \bigl(i\varphi \mathcal{P}\bigr)\mathcal{J} 
     \bigl(  \exp \bigl(i\varphi \mathcal{P}\bigr)\bigr)^*=O_{2m\times 2m}.
\end{equation}

\item Let $\mathcal{U}\in \mathbb{C}^{2m\times 2m}$ be a $\mathcal{J}$-unitary matrix and define
\begin{equation}\label{DefPt}
\widetilde{\mathcal{P}}
= \mathcal{U}\mathcal{P}\mathcal{U}^{-1}.
\end{equation}
Then
\begin{equation}\label{PropPt}
\widetilde{\mathcal{P}}^{2}=-\widetilde{\mathcal{P}},
\qquad
\widetilde{\mathcal{P}}\mathcal{J}\ge O_{2m\times 2m},
\qquad
\operatorname{tr}\widetilde{\mathcal{P}}=-m,
\qquad \operatorname{tr}\bigl ( \widetilde{\mathcal{P}}\mathcal{J}\bigr )>0.
\end{equation}

\item Let $\widetilde{\mathcal P}$ be defined by \eqref{DefPt}. Then, for every $\lambda\in\mathbb{C}$, the following identity holds:
\begin{equation}\label{Iden2}
\mathcal{U}^{-1}\exp\bigl(\lambda\widetilde{\mathcal{P}}\bigr)\mathcal{U}
=\exp(\lambda\mathcal{P}).
\end{equation}

\end{enumerate}
\end{lem}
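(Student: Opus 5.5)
Each assertion reduces to finite algebra built on the relation $\mathcal P^2=-\mathcal P$ and the $\mathcal J$-structure. We take them in order.

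For (1), $\mathcal P^2=-\mathcal P$ gives by induction $\mathcal P^k=(-1)^{k-1}\mathcal P$ for $k\ge1$. Hence
\[
\exp(\lambda\mathcal P)=I+\sum_{k\ge1}\frac{\lambda^k(-1)^{k-1}}{k!}\mathcal P=I+\bigl(1-e^{-\lambda}\bigr)\mathcal P .
\]

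For (2), we first show that $\mathcal P\mathcal J\ge O$ makes $\mathcal P\mathcal J$ Hermitian, so $\mathcal P\mathcal J=\mathcal J\mathcal P^*$. Multiplying by $\mathcal J$ and using $\mathcal J^2=I$ yields $\mathcal P^*=\mathcal J\mathcal P\mathcal J$. Applying (1) with $\lambda=i\varphi$ and setting $c=1-e^{-i\varphi}$, we expand
\[
(I+c\mathcal P)\mathcal J(I+\bar c\mathcal P^*)=\mathcal J+(c+\bar c)\mathcal P\mathcal J+|c|^2\mathcal P\mathcal J\mathcal P^* .
\]
The last term simplifies: $\mathcal P\mathcal J\mathcal P^*=\mathcal P\mathcal J\mathcal J\mathcal P\mathcal J=\mathcal P^2\mathcal J=-\mathcal P\mathcal J$. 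The sum then reduces to $\mathcal J+(c+\bar c-|c|^2)\mathcal P\mathcal J$. For real $\varphi$ we compute $c+\bar c-|c|^2=2-2\cos\varphi-(2-2\cos\varphi)=0$, which proves \eqref{Junit}.

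For (3), similarity preserves the relation $\widetilde{\mathcal P}^2=-\widetilde{\mathcal P}$ and the trace. For positivity we use \eqref{JUinv}, i.e.\ $\mathcal U^{-1}=\mathcal J\mathcal U^*\mathcal J$, to write
\[
\widetilde{\mathcal P}\mathcal J=\mathcal U\mathcal P\mathcal J\mathcal U^*\mathcal J\mathcal J=\mathcal U(\mathcal P\mathcal J)\mathcal U^*\ge O .
\]
For the strict trace inequality, a nonzero congruence of a nonzero nonnegative matrix by an invertible matrix stays nonzero, and $\operatorname{tr}(\mathcal P\mathcal J)>0$ says exactly that $\mathcal P\mathcal J\ne O$. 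Since $\mathcal U$ is invertible, $\mathcal U(\mathcal P\mathcal J)\mathcal U^*$ is nonnegative and nonzero, so its trace is positive.

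For (4), we use $\mathcal U^{-1}\widetilde{\mathcal P}^k\mathcal U=\mathcal P^k$ termwise in the exponential series. Alternatively, apply (1) to both sides, which is legitimate since (3) shows $\widetilde{\mathcal P}$ satisfies the same hypotheses.

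The only mildly delicate point is the Hermitian identity $\mathcal P^*=\mathcal J\mathcal P\mathcal J$ in (2). The rest is routine.
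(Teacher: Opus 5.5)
Your proof is correct and follows essentially the same route as the paper. The shared ingredients are $\mathcal P^k=(-1)^{k-1}\mathcal P$ for (1), the Hermitian relation $\mathcal J\mathcal P^*=\mathcal P\mathcal J$ for (2), the congruence $\widetilde{\mathcal P}\mathcal J=\mathcal U(\mathcal P\mathcal J)\mathcal U^*$ for (3), and conjugation for (4).

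The differences are minor. In (2) you expand and check $c+\bar c=|c|^2$, whereas the paper recognizes $\mathcal J\exp(i\varphi\mathcal P)^*=\exp(-i\varphi\mathcal P)\mathcal J$; your expansion actually yields the general identity $\exp(\lambda\mathcal P)\mathcal J\exp(\lambda\mathcal P)^*=\mathcal J+(1-e^{-2\operatorname{Re}\lambda})\mathcal P\mathcal J$. In (3) you carry $\mathcal P\mathcal J\neq O$ through the congruence, whereas the paper derives nonvanishing from $\operatorname{tr}\widetilde{\mathcal P}=-m$, which shows the hypothesis $\operatorname{tr}(\mathcal P\mathcal J)>0$ is redundant.
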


\begin{proof}

1. By induction,
\[
\mathcal P^n=(-1)^{n-1}\mathcal P,
\qquad n\ge1.
\]
Therefore,
\begin{align*}
\exp(\lambda\mathcal P)
&=
I+\sum_{n=1}^{\infty}\frac{\lambda^n}{n!}\mathcal P^n  \\
&=
I+
\left(
\lambda-\frac{\lambda^2}{2!}
+\frac{\lambda^3}{3!}
-\frac{\lambda^4}{4!}
+\cdots
\right)\mathcal P   \\
&=
I+\bigl(1-e^{-\lambda}\bigr)\mathcal P,
\end{align*}
which proves \eqref{Iden1}.

2. Since $\mathcal P\mathcal J\ge O$, the matrix
$\mathcal P\mathcal J$ is Hermitian. Hence,
\[
\mathcal P\mathcal J
=
(\mathcal P\mathcal J)^*
=
\mathcal J\mathcal P^*.
\]
Combining this identity with \eqref{Iden1}, we obtain
\begin{align*}
\mathcal J
&-
\exp(i\varphi\mathcal P)\mathcal J
\bigl(\exp(i\varphi\mathcal P)\bigr)^*
\\
&=
\mathcal J
-
\exp(i\varphi\mathcal P)\mathcal J
\bigl(I+(1-e^{-i\varphi})\mathcal P\bigr)^*
\\
&=
\mathcal J
-
\exp(i\varphi\mathcal P)\mathcal J
\bigl(I+(1-e^{i\varphi})\mathcal P^*\bigr)
\\
&=
\mathcal J
-
\exp(i\varphi\mathcal P)
\bigl(I+(1-e^{i\varphi})\mathcal P\bigr)
\mathcal J
\\
&=
\mathcal J
-
\exp(i\varphi\mathcal P)
\exp(-i\varphi\mathcal P)\mathcal J
\\
&=
\mathcal J-\mathcal J
=
O.
\end{align*}
This proves \eqref{Junit}.

3. The matrix $\widetilde{\mathcal P}$ is well defined by
\eqref{DefPt}, since every $\mathcal J$-unitary matrix is invertible.
By \eqref{DefPt} and the identity
$\mathcal P^2=-\mathcal P$, we obtain
\[
\widetilde{\mathcal P}^{2}
=
\mathcal U\mathcal P^2\mathcal U^{-1}
=
-\mathcal U\mathcal P\mathcal U^{-1}
=
-\widetilde{\mathcal P}.
\]

Furthermore, by \eqref{JUinv},
\[
\widetilde{\mathcal P}\mathcal J
=
\mathcal U\mathcal P\mathcal U^{-1}\mathcal J
=
\mathcal U\mathcal P\mathcal J\mathcal U^*
\ge
O_{2m\times2m}.
\]

Hence,
\[
\operatorname{tr}\widetilde{\mathcal P}
=
\operatorname{tr}
(\mathcal U\mathcal P\mathcal U^{-1})
=
\operatorname{tr}\mathcal P
=
-m.
\]

Finally, since
$\widetilde{\mathcal P}\mathcal J\ge O$,
we have
\[
\operatorname{tr}
(\widetilde{\mathcal P}\mathcal J)
\ge0.
\]
If
\[
\operatorname{tr}
(\widetilde{\mathcal P}\mathcal J)
=0,
\]
then
\[
\widetilde{\mathcal P}\mathcal J
=
O,
\]
because a non-negative matrix with zero trace must be the
zero matrix. Since $\mathcal J$ is invertible, it follows that
$\widetilde{\mathcal P}=O$, contradicting
\[
\operatorname{tr}\widetilde{\mathcal P}
=
-m.
\]
Therefore,
\[
\operatorname{tr}
(\widetilde{\mathcal P}\mathcal J)
>0.
\]
This proves \eqref{PropPt}.

4. It follows from \eqref{Iden1} and \eqref{PropPt} that
\begin{align*}
\mathcal U
\exp(\lambda\mathcal P)
\mathcal U^{-1}
&=
\mathcal U
\bigl(
I+(1-e^{-\lambda})\mathcal P
\bigr)
\mathcal U^{-1}
\\
&=
I
+
(1-e^{-\lambda})
\mathcal U\mathcal P\mathcal U^{-1}
\\
&=
I
+
(1-e^{-\lambda})
\widetilde{\mathcal P}
\\
&=
\exp(\lambda\widetilde{\mathcal P}).
\end{align*}
Multiplying this identity by $\mathcal U^{-1}$ on the left and by
$\mathcal U$ on the right, we obtain \eqref{Iden2}.
\end{proof}

For any nonzero complex number \(z\), the principal value of the argument and the principal value of the logarithm are defined as
\begin{equation}\label{Clog}
\operatorname{Arg} z \in (-\pi,\pi], \qquad
\operatorname{Log} z = \log|z| + i\operatorname{Arg} z.
\end{equation}

\begin{rem}
We will repeatedly use the well-known fact that if square matrices \(A\) and \(B\) commute, then
\begin{equation}\label{Li}
\exp(A+B)=\exp(A)\exp(B).
\end{equation}
\end{rem}

\begin{lem}
For each \(j \ge 1\), let \(b_j\) denote the Blaschke--Potapov matrix factor  defined by~\eqref{BP2}. Then, for every \(z\in\mathbb{C}\setminus\{z_j,\overline z_j\}\),
the following statements hold:

\begin{enumerate}
\item
The factor \(b_j\) admits the exponential representation
\begin{equation}\label{BPexp}
b_j(z)
=
\exp \bigl(
-\operatorname{Log}\zeta_j(z)\mathcal{P}_j
\bigr).
\end{equation}

\item
Define
\begin{equation}\label{alfi}
\alpha_j(z)=\log |\zeta_j(z)|,
\qquad
\varphi_j(z)=\operatorname{Arg}\zeta_j(z).
\end{equation}
Then \(b_j\) admits the factorization
\begin{equation}\label{Bma}
b_j(z) 
=
\exp \bigl(
-\alpha_j(z) \mathcal{P}_j
\bigr) \cdot 
\exp \bigl(
-i\varphi_j(z) \mathcal{P}_j
\bigr).
\end{equation}
\end{enumerate}
\end{lem}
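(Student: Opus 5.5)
The plan is to reduce everything to the scalar identity \eqref{Iden1} from the preceding lemma, which applies to $\mathcal{P}_j$ because \eqref{Pj2} supplies exactly the hypotheses $\mathcal{P}_j^2=-\mathcal{P}_j$, $\mathcal{P}_j\mathcal{J}\ge O$, $\operatorname{tr}\mathcal{P}_j=-m$, $\operatorname{tr}(\mathcal{P}_j\mathcal{J})>0$. (In fact only $\mathcal{P}_j^2=-\mathcal{P}_j$ is needed for \eqref{Iden1}.)

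For part 1, fix $z\in\mathbb{C}\setminus\{z_j,\overline z_j\}$. First check that $\zeta_j(z)$, defined in \eqref{bf}, is a finite nonzero complex number. The denominator $z-\overline z_j$ does not vanish, and the numerator $z-z_j$ does not vanish. Also $z_j\neq 0$, since $z_j\in\mathbb{C}_+$. Hence $\operatorname{Log}\zeta_j(z)$ is well defined by \eqref{Clog}, and $e^{\operatorname{Log}\zeta_j(z)}=\zeta_j(z)$.

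Apply \eqref{Iden1} with $\lambda=-\operatorname{Log}\zeta_j(z)$. This gives
\[
\exp\bigl(-\operatorname{Log}\zeta_j(z)\mathcal{P}_j\bigr)=I+\bigl(1-e^{\operatorname{Log}\zeta_j(z)}\bigr)\mathcal{P}_j=I+\mathcal{P}_j-\zeta_j(z)\mathcal{P}_j .
\]
By \eqref{BP2}, the right-hand side equals $b_j(z)$, which proves \eqref{BPexp}.

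For part 2, write $\operatorname{Log}\zeta_j(z)=\alpha_j(z)+i\varphi_j(z)$, using \eqref{Clog} and \eqref{alfi}. Then $-\operatorname{Log}\zeta_j(z)\mathcal{P}_j$ splits as $A+B$ with $A=-\alpha_j(z)\mathcal{P}_j$ and $B=-i\varphi_j(z)\mathcal{P}_j$. Both are scalar multiples of the same matrix $\mathcal{P}_j$, so they commute, and \eqref{Li} yields \eqref{Bma}.

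Alternatively, one can verify \eqref{Bma} directly by multiplying the two factors written via \eqref{Iden1}. Using $\mathcal{P}_j^2=-\mathcal{P}_j$, the product is $I+(1-e^{\alpha_j})\mathcal{P}_j+(1-e^{i\varphi_j})\mathcal{P}_j-(1-e^{\alpha_j})(1-e^{i\varphi_j})\mathcal{P}_j$. This simplifies to $I+(1-e^{\alpha_j+i\varphi_j})\mathcal{P}_j=b_j(z)$.

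There is no real obstacle here. The only points needing care are that $\zeta_j(z)\neq0,\infty$ on the stated domain, so the principal logarithm exists, and that the definition of the principal branch guarantees $e^{\operatorname{Log}\zeta}=\zeta$ regardless of the branch cut. The branch choice affects only $\varphi_j$, and it is irrelevant to the identity because $\exp$ of $2\pi i k\,\mathcal{P}_j$ equals $I$ by \eqref{Iden1}.
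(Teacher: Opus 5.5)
Your proposal is correct and follows the paper's proof essentially step for step: substitute $\lambda=-\operatorname{Log}\zeta_j(z)$ into \eqref{Iden1} to recover \eqref{BP2}, then split $\operatorname{Log}\zeta_j(z)=\alpha_j(z)+i\varphi_j(z)$ and use the commutativity of scalar multiples of $\mathcal{P}_j$ together with \eqref{Li}. Your additional direct check of \eqref{Bma} via $\mathcal{P}_j^2=-\mathcal{P}_j$ and your remark on branch independence are correct but not needed.
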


\begin{proof}

1.  
Observe that for $z \in \mathbb{C} \setminus \{ z_j, \overline{z}_j \}$, the function $\zeta_j(z)$ is well defined and nonvanishing. Now, substituting $\lambda = -\operatorname{Log} \zeta_j(z)$ into~\eqref{Iden1} yields
\[
\exp\bigl(-\operatorname{Log} \zeta_j(z) \mathcal{P}_j\bigr)
= I+\mathcal{P}_j -\exp(\operatorname{Log}\zeta_j(z))\mathcal{P}_j \]
 \[ = I+\mathcal{P}_j - \zeta_j(z)\mathcal{P}_j = b_j(z).
\]

2.  Using \eqref{BPexp}, \eqref{Clog}, and \eqref{Li}, we obtain
\begin{align*}
b_j(z)
=&\exp\bigl(-\operatorname{Log} \zeta_j(z) \mathcal{P}_j\bigr)\\
=&\exp\bigl(- \log |\zeta_j(z)| \mathcal{P}_j-i\operatorname{Arg} \zeta_j(z) \mathcal{P}_j\bigr)\\
=&
\exp \bigl(
-\alpha_j(z) \mathcal{P}_j
\bigr) \cdot 
\exp \bigl(
-i\varphi_j(z) \mathcal{P}_j
\bigr).
\end{align*}
This implies \eqref{Bma}.  \end{proof}

Throughout this chapter, we fix a point
\begin{equation}\label{z0}
z_0 \in \mathbb{C}_+ \setminus \mathcal{Z},
\end{equation}
and define a sequence of real numbers by means of the scalar Blaschke factors~\eqref{bf}:
\begin{equation}\label{FIn}
\phi_j=\operatorname{Arg}\bigl(\zeta_j(z_0)\bigr)\in(-\pi,\pi],
\qquad j\ge1.
\end{equation}

Consider the sequence of matrices \((\mathcal{P})_j\) and the sequence of real numbers \((\phi_j)\), defined by~\eqref{PPjPQ} and~\eqref{FIn}, respectively.
Using these sequences, we recursively construct a modified sequence of matrices
\((\widetilde{\mathcal{P}}_j)\) as follows:
\begin{equation}\label{Trans}
\widetilde{\mathcal{P}}_1=\mathcal{P}_1,\qquad
\widetilde{\mathcal{P}}_j
=\mathcal{U}_{j-1}\mathcal{P}_j\mathcal{U}_{j-1}^{-1},
\qquad j>1,
\end{equation}
where
\begin{equation}\label{calU}
\mathcal{U}_j=
\mathop{\overleftarrow{\prod}}\limits_{k=1}^{j}
\exp\left(-i\phi_k\widetilde{\mathcal{P}}_k\right),
\qquad j\ge1.
\end{equation}
Each factor in the product on the right-hand side is
\(\mathcal{J}\)-unitary (see~\eqref{Junit}). Therefore, each matrix
\(\mathcal{U}_j\) is also \(\mathcal{J}\)-unitary and hence invertible.

\begin{defn}
For each \(j\ge1\), the matrix function
\begin{equation}\label{BmaM}
\widetilde{b}_j(z)=
\exp\bigl(-\alpha_j(z)\widetilde{\mathcal{P}}_j\bigr) 
\exp\bigl(-i(\varphi_j(z)-\phi_j)\widetilde{\mathcal{P}}_j\bigr), \quad  z \in \mathbb{C} \setminus
                          \{z_j, \bar z_j\}
\end{equation}
is  called a \emph{modified Blaschke--Potapov factor}.
\end{defn}

\begin{lem}
Let the sequences of Blaschke--Potapov factors \((b_j)\), matrices \(({\mathcal{U}}_j)\), and modified Blaschke--Potapov factors \((\widetilde{b}_j)\) be defined by \eqref{Bma}, \eqref{calU}, and \eqref{BmaM}, respectively. Then
\begin{align}\label{BtoBP1}
b_1(z)&=\widetilde{b}_1(z){\mathcal{U}}_{1},
\qquad z \in \mathbb{C} \setminus \{z_1, \bar z_1\}, \\
\intertext{and for each \(j>1\) we have}
b_j(z)&={\mathcal{U}}_{j-1}^{-1}\widetilde{b}_j(z){\mathcal{U}}_{j},
\qquad z \in \mathbb{C} \setminus \{z_j, \bar z_j\}.
\label{BtoBPj}
\end{align}
\end{lem}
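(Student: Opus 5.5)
The plan is to treat $j=1$ and $j>1$ separately. In each case we substitute the definitions and use three facts: the factorization \eqref{Bma}, the commutation rule \eqref{Li}, and the conjugation identity \eqref{Iden2}.

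\emph{Case $j=1$.} By \eqref{Trans} we have $\widetilde{\mathcal P}_1=\mathcal P_1$, and by \eqref{calU} we have $\mathcal U_1=\exp(-i\phi_1\mathcal P_1)$. The three matrices $\exp(-\alpha_1(z)\mathcal P_1)$, $\exp(-i(\varphi_1(z)-\phi_1)\mathcal P_1)$ and $\exp(-i\phi_1\mathcal P_1)$ are exponentials of scalar multiples of the same matrix $\mathcal P_1$, so they commute. Applying \eqref{Li} gives
\[
\widetilde b_1(z)\,\mathcal U_1=\exp(-\alpha_1(z)\mathcal P_1)\exp\bigl(-i(\varphi_1(z)-\phi_1)\mathcal P_1-i\phi_1\mathcal P_1\bigr)=\exp(-\alpha_1(z)\mathcal P_1)\exp(-i\varphi_1(z)\mathcal P_1).
\]
By \eqref{Bma} the right-hand side is $b_1(z)$, which proves \eqref{BtoBP1}.

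\emph{Case $j>1$.} The left product \eqref{calU} satisfies the recursion $\mathcal U_j=\exp(-i\phi_j\widetilde{\mathcal P}_j)\,\mathcal U_{j-1}$, because the new factor with index $j$ is placed on the left. Substituting this into the right-hand side of \eqref{BtoBPj} gives
\[
\mathcal U_{j-1}^{-1}\widetilde b_j(z)\mathcal U_j=\mathcal U_{j-1}^{-1}\exp(-\alpha_j(z)\widetilde{\mathcal P}_j)\exp\bigl(-i(\varphi_j(z)-\phi_j)\widetilde{\mathcal P}_j\bigr)\exp(-i\phi_j\widetilde{\mathcal P}_j)\,\mathcal U_{j-1}.
\]
As in the first case, the last two exponentials are exponentials of commuting multiples of $\widetilde{\mathcal P}_j$. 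By \eqref{Li} they merge into $\exp(-i\varphi_j(z)\widetilde{\mathcal P}_j)$. Next we insert $I=\mathcal U_{j-1}\mathcal U_{j-1}^{-1}$ between the remaining two exponentials, which is allowed since $\mathcal U_{j-1}$ is $\mathcal J$-unitary and hence invertible. The expression then becomes
\[
\mathcal U_{j-1}^{-1}\exp(-\alpha_j(z)\widetilde{\mathcal P}_j)\mathcal U_{j-1}\cdot\mathcal U_{j-1}^{-1}\exp(-i\varphi_j(z)\widetilde{\mathcal P}_j)\mathcal U_{j-1}.
\]
Now we apply \eqref{Iden2} with $\mathcal U=\mathcal U_{j-1}$ and $\widetilde{\mathcal P}_j=\mathcal U_{j-1}\mathcal P_j\mathcal U_{j-1}^{-1}$ from \eqref{Trans}, taking $\lambda=-\alpha_j(z)$ in the first factor and $\lambda=-i\varphi_j(z)$ in the second. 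This turns the expression into $\exp(-\alpha_j(z)\mathcal P_j)\exp(-i\varphi_j(z)\mathcal P_j)$, which equals $b_j(z)$ by \eqref{Bma}.

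\emph{Hypotheses and obstacles.} Identity \eqref{Iden2} needs $\mathcal P_j$ to satisfy the hypotheses of the preceding lemma, and these are exactly \eqref{Pj2}. It also needs $\mathcal U_{j-1}$ to be $\mathcal J$-unitary, which holds because every factor in \eqref{calU} is $\mathcal J$-unitary by \eqref{Junit}. That lemma applies to each factor since each $\widetilde{\mathcal P}_k$ again satisfies \eqref{PropPt}, and this follows by induction on $k$ via part 3 of the lemma.

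The only real obstacle is bookkeeping. One must check that the left product in \eqref{calU} gives the recursion $\mathcal U_j=\exp(-i\phi_j\widetilde{\mathcal P}_j)\mathcal U_{j-1}$ rather than right multiplication, since the order of the factors is what makes the argument go through. Once that is confirmed, the rest is routine.
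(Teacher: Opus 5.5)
Your proof is correct and follows the paper's argument. The case $j=1$ is identical. For $j>1$ you use the same recursion $\mathcal U_j=\exp(-i\phi_j\widetilde{\mathcal P}_j)\mathcal U_{j-1}$, merge the two phase exponentials via \eqref{Li}, and conjugate with \eqref{Iden2}. The only cosmetic difference is that the paper first collapses everything into the single exponential $\exp\bigl(-(\alpha_j(z)+i\varphi_j(z))\widetilde{\mathcal P}_j\bigr)$ and applies \eqref{Iden2} once, whereas you insert $\mathcal U_{j-1}\mathcal U_{j-1}^{-1}$ and apply it to each factor separately.
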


\begin{proof}
We begin by proving \eqref{BtoBP1}. By \eqref{Bma},
\begin{align*}
\widetilde{b}_1(z){\mathcal{U}}_{1}
&=
\exp\bigl(-\alpha_1(z)\widetilde{\mathcal{P}}_1\bigr) 
\exp\bigl(-i(\varphi_1(z)-\phi_1)\widetilde{\mathcal{P}}_1\bigr)
\exp\bigl(-i\phi_1\widetilde{\mathcal{P}}_1\bigr)\\
&=
\exp\bigl(-\alpha_1(z){\mathcal{P}}_1\bigr) 
\exp\bigl(-i\varphi_1(z){\mathcal{P}}_1\bigr)\\
&=
b_1(z).
\end{align*}

We next prove \eqref{BtoBPj}. Since
\(
{\mathcal U}_j=
\exp(-i\phi_j\widetilde{\mathcal P}){\mathcal U}_{j-1},
\)
we have
\begin{align*}
{\mathcal{U}}_{j-1}^{-1}&\widetilde{b}_j(z){\mathcal{U}}_{j}\\
&=
{\mathcal{U}}_{j-1}^{-1}
\exp\bigl(-\alpha_j(z)\widetilde{\mathcal{P}}_j\bigr) 
\exp\bigl(-i(\varphi_j(z)-\phi_j)\widetilde{\mathcal{P}}_j\bigr)
\exp\bigl(-i\phi_j\widetilde{\mathcal{P}}_j\bigr)
{\mathcal{U}}_{j-1}\\
&=
{\mathcal{U}}_{j-1}^{-1}
\exp\bigl(-\alpha_j(z)\widetilde{\mathcal{P}}_j\bigr) 
\exp\bigl(-i\varphi_j(z)\widetilde{\mathcal{P}}_j\bigr)
{\mathcal{U}}_{j-1}\\
&=
{\mathcal{U}}_{j-1}^{-1}
\exp\bigl(-(\alpha_j(z)+i\varphi_j(z))\widetilde{\mathcal{P}}_j\bigr)
{\mathcal{U}}_{j-1}\\
&=
\exp\bigl(-(\alpha_j(z)+i\varphi_j(z)){\mathcal{P}}_j\bigr)\\
&=
b_j(z).
\end{align*}
Here, the first, fourth, and fifth equalities follow from \eqref{calU}, \eqref{Iden2}, and \eqref{Bma}, respectively.
\end{proof}

Assume that a completely indeterminate truncated Nevanlinna--Pick problem~\eqref{NPT} is given and that its resolvent matrix \( U_n \) is defined by~\eqref{FAC}.
It follows from \eqref{Bma} that the resolvent matrix can be written in the form
\begin{equation}\label{RMBP}
U_n(z)=\overrightarrow{\prod}_{j=1}^{n}  b_j(z)=
\overrightarrow{\prod}_{j=1}^{n} \exp\bigl(-\alpha_j(z)\mathcal{P}_j\bigr)\cdot
\exp\bigl(-i\varphi_j(z)\mathcal{P}_j\bigr),\ z\in\mathbb{C}_+\setminus \mathcal{Z}_n.
\end{equation}
Along with the resolvent matrix \( U_n \), we shall also consider, for all
\( z \in \mathbb{C}_+ \setminus \mathcal{Z}_n \), the matrix function
(see also \eqref{BmaM})
\begin{equation}\label{RMBPt}
\widetilde  U_n(z)=
\overrightarrow{\prod}_{j=1}^{n} \widetilde b_j(z)=
\overrightarrow{\prod}_{j=1}^{n} \exp\bigl(-\alpha_j(z)\widetilde{\mathcal{P}}_j\bigr)\cdot
\exp\bigl(-i(\varphi_j(z)-\phi_j)\widetilde{\mathcal{P}}_j\bigr).
\end{equation}

\begin{thm}
Assume that the matrix Nevanlinna--Pick interpolation problem~\eqref{NP}
is such that all its truncated problems~\eqref{NPT} are completely
indeterminate.
Let the sequences of \( \mathcal J \)-unitary matrices
\( (\mathcal U_n) \), the resolvent matrices
\( (U_n) \), and the matrix functions
\( (\widetilde U_n) \) be defined by
\eqref{calU}, \eqref{RMBP}, and \eqref{RMBPt}, respectively.

Then, for every \( n \ge 1 \) and all
\( z \in \mathbb C \setminus
\bigl(\mathcal Z_n \cup \overline{\mathcal Z}_n\bigr) \),
the following identities hold:
\begin{align}
U_n(z) &= \widetilde U_n(z) \mathcal U_n, \label{UandtU}\\
U_n(z)\mathcal J U_n^*(z)
&= \widetilde U_n(z)\mathcal J \widetilde U_n^*(z). \label{Jform}
\end{align}
\end{thm}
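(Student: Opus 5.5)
The plan is to prove \eqref{UandtU} by a telescoping argument based on the preceding lemma, and then to deduce \eqref{Jform} from the $\mathcal J$-unitarity of $\mathcal U_n$.

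For \eqref{UandtU}, fix $z\in\mathbb C\setminus(\mathcal Z_n\cup\overline{\mathcal Z}_n)$. Then every factor $b_j(z)$ and every $\widetilde b_j(z)$ with $j\le n$ is defined, since $z\notin\{z_j,\bar z_j\}$. By Theorem~\ref{Factor} together with \eqref{RMBP}, we have $U_n(z)=b_1(z)b_2(z)\cdots b_n(z)$. Substitute \eqref{BtoBP1} for the first factor and \eqref{BtoBPj} for each $j=2,\dots,n$:
\[
U_n(z)=\widetilde b_1(z)\,\mathcal U_1\,\mathcal U_1^{-1}\widetilde b_2(z)\,\mathcal U_2\,\mathcal U_2^{-1}\cdots \mathcal U_{n-1}^{-1}\widetilde b_n(z)\,\mathcal U_n .
\]
The adjacent pairs $\mathcal U_{j}\mathcal U_{j}^{-1}$ cancel, and these inverses exist because each $\mathcal U_j$ is $\mathcal J$-unitary. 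What remains is $\widetilde b_1(z)\cdots\widetilde b_n(z)\,\mathcal U_n=\widetilde U_n(z)\,\mathcal U_n$, by \eqref{RMBPt}. To make this rigorous I would run an induction on $n$ with the statement $U_n(z)=\widetilde U_n(z)\mathcal U_n$. The base case is \eqref{BtoBP1}. For the inductive step,
\[
U_{n+1}=U_n b_{n+1}=\widetilde U_n\,\mathcal U_n\,\mathcal U_n^{-1}\widetilde b_{n+1}\,\mathcal U_{n+1}=\widetilde U_{n+1}\,\mathcal U_{n+1}.
\]

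For \eqref{Jform}, I first need that $\mathcal U_n$ is $\mathcal J$-unitary. Each factor $\exp(-i\phi_k\widetilde{\mathcal P}_k)$ is $\mathcal J$-unitary by \eqref{Junit} with $\varphi=-\phi_k$ real. This application is legitimate because $\widetilde{\mathcal P}_k$ satisfies the hypotheses of that lemma, by \eqref{PropPt} combined with \eqref{Pj2}; this in turn relies on an inductive check that $\mathcal U_{k-1}$ is $\mathcal J$-unitary, which holds since products of $\mathcal J$-unitary matrices are $\mathcal J$-unitary. Hence $\mathcal U_n\mathcal J\mathcal U_n^*=\mathcal J$, and
\[
U_n(z)\mathcal J U_n^*(z)=\widetilde U_n(z)\,\mathcal U_n\mathcal J\mathcal U_n^*\,\widetilde U_n^*(z)=\widetilde U_n(z)\mathcal J\widetilde U_n^*(z).
\]

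There is no genuine obstacle here. The only delicate point is bookkeeping: confirming that the recursive definitions \eqref{Trans} and \eqref{calU} are well-posed, so that $\widetilde{\mathcal P}_j$ depends only on $\mathcal U_{j-1}$, and that $\mathcal U_j=\exp(-i\phi_j\widetilde{\mathcal P}_j)\,\mathcal U_{j-1}$. This identity was already used in the preceding lemma, so I will simply invoke it.
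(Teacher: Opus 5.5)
Your proof is correct and follows the paper's own argument: induction on $n$ with \eqref{BtoBP1} as the base case and \eqref{BtoBPj} for the step $\widetilde U_{n+1}\mathcal U_{n+1}=\widetilde U_n\mathcal U_n\,\mathcal U_n^{-1}\widetilde b_{n+1}\mathcal U_{n+1}=U_nb_{n+1}=U_{n+1}$, and then the $\mathcal J$-unitarity of $\mathcal U_n$ to obtain \eqref{Jform}. Your inductive check that each $\widetilde{\mathcal P}_k$ satisfies the hypotheses needed for \eqref{Junit} is extra care that the paper leaves implicit.
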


\begin{proof}
We prove \eqref{UandtU} by induction on \( n \).

For \( n = 1 \), the identity follows from \eqref{BtoBP1}:
\[
\widetilde{U}_1(z)  \mathcal{U}_1
=
\widetilde{b}_1(z)\mathcal{U}_1
=
b_1(z)
=
U_1(z).
\]

Assume that \eqref{UandtU} holds for some \( n \ge 1 \). Then, by the induction hypothesis and \eqref{BtoBPj}, we obtain
\[
\widetilde{U}_{n+1}(z)\mathcal{U}_{n+1}
=
\widetilde{U}_{n}(z)\widetilde{b}_{n+1}(z) \mathcal{U}_{n+1}
=
\widetilde{U}_{n}(z)\mathcal{U}_{n}\mathcal{U}_{n}^{-1}\widetilde{b}_{n+1}(z) \mathcal{U}_{n+1}
\]
\[
=
{U}_{n}(z){b}_{n+1}(z)
=
{U}_{n+1}(z).
\]
Hence, \eqref{UandtU} holds for all \( n \).

The identity \eqref{Jform} follows immediately from \eqref{UandtU} and the \( \mathcal{J} \)-unitarity of the matrices \( \mathcal{U}_n \).
\end{proof}
It follows from \eqref{Jform} that both matrix functions \(U_n\) and \(\widetilde U_n\) are resolvent matrices for the truncated Nevanlinna--Pick problem~\eqref{NPT}. We refer to the matrix function \(\widetilde U_n\) as the \emph{modified resolvent matrix}. 

\begin{lem}\label{BPmod}
Let the modified Blaschke--Potapov factors be defined by \eqref{BmaM}. Then, for each \(j\ge1\),
\begin{equation}\label{NMBP}
\widetilde{b}_j(z_0)
=
\exp\bigl(-\alpha_j(z_0)\widetilde{\mathcal{P}}_j\bigr),
\qquad
-\alpha_j(z_0)\widetilde{\mathcal{P}}_j\mathcal{J}
\ge O_{2m\times2m}.
\end{equation}
Consequently, the matrices \(\widetilde{b}_j(z_0)\) are \((-\mathcal{J})\)-moduli.
\end{lem}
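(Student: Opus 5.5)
The plan is to evaluate the definition \eqref{BmaM} at the point \(z=z_0\), using the choice of the normalizing angles \(\phi_j\) in \eqref{FIn}, and then to read off the sign of \(\alpha_j(z_0)\) from the scalar Blaschke factor.

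First, by \eqref{alfi} and \eqref{FIn} we have \(\varphi_j(z_0)=\operatorname{Arg}\zeta_j(z_0)=\phi_j\). Hence the second exponential in \eqref{BmaM} becomes \(\exp(O)=I\). This gives \(\widetilde b_j(z_0)=\exp\bigl(-\alpha_j(z_0)\widetilde{\mathcal P}_j\bigr)\). Note that \(z_0\notin\mathcal Z\), and \(z_0\in\mathbb C_+\) cannot equal \(\bar z_j\), so \(\zeta_j(z_0)\) is defined and nonzero.

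Second, we show that \(\alpha_j(z_0)<0\), that is, \(|\zeta_j(z_0)|<1\). From \eqref{bf},
\[
|\zeta_j(z_0)| = \frac{|z_0-z_j|}{|z_0-\bar z_j|},
\]
because \(|\bar z_j/z_j|=1\). For \(z_0,z_j\in\mathbb C_+\), the point \(z_0\) is strictly closer to \(z_j\) than to \(\bar z_j\). Indeed,
\[
|z_0-\bar z_j|^2-|z_0-z_j|^2 = 4\operatorname{Im}z_0\operatorname{Im}z_j>0 .
\]
Hence \(\alpha_j(z_0)=\log|\zeta_j(z_0)|<0\).

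Third, we need \(\widetilde{\mathcal P}_j\mathcal J\ge O\). For \(j=1\) this is \eqref{Pj2}. For \(j>1\), \(\widetilde{\mathcal P}_j=\mathcal U_{j-1}\mathcal P_j\mathcal U_{j-1}^{-1}\) with \(\mathcal U_{j-1}\) being \(\mathcal J\)-unitary, so \eqref{PropPt} applies. Combining the two facts gives
\[
-\alpha_j(z_0)\widetilde{\mathcal P}_j\mathcal J=|\alpha_j(z_0)|\,\widetilde{\mathcal P}_j\mathcal J\ge O .
\]

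Finally, set \(\lambda=-\alpha_j(z_0)>0\). Then \(\widetilde b_j(z_0)=\exp(\lambda\widetilde{\mathcal P}_j)\) with \(\widetilde{\mathcal P}_j\mathcal J\ge O\), which is a \((-\mathcal J)\)-modulus by definition.

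There is no serious obstacle. The only point needing care is the strict inequality \(|\zeta_j(z_0)|<1\), which guarantees \(\lambda>0\) as the definition of a modulus requires; it relies on \(z_0\) lying in the open upper half-plane and \(z_0\ne z_j\).
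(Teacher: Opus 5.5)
Your proposal is correct and follows the paper's proof essentially step for step. The phase factor in \eqref{BmaM} reduces to $I$ because $\varphi_j(z_0)=\phi_j$, and $-\alpha_j(z_0)>0$ combined with \eqref{PropPt} gives the sign condition. Your identity $|z_0-\bar z_j|^2-|z_0-z_j|^2=4\operatorname{Im}z_0\operatorname{Im}z_j$ simply makes explicit the inequality the paper asserts without computation.
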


\begin{proof}
By \eqref{BmaM}, \eqref{alfi}, and \eqref{FIn},
\[
\begin{aligned}
\widetilde{b}_j(z_0)
&=
\exp\bigl(-\alpha_j(z_0)\widetilde{\mathcal{P}}_j\bigr) 
\exp\bigl(-i(\varphi_j(z_0)-\phi_j)\widetilde{\mathcal{P}}_j\bigr) \\
&=
\exp\bigl(-\alpha_j(z_0)\widetilde{\mathcal{P}}_j\bigr) 
\exp\bigl(-i\bigl(\operatorname{Arg}\zeta_j(z_0)-\operatorname{Arg}\zeta_j(z_0)\bigr)\widetilde{\mathcal{P}}_j\bigr) \\
&=
\exp\bigl(-\alpha_j(z_0)\widetilde{\mathcal{P}}_j\bigr).
\end{aligned}
\]
Thus, the first equality in \eqref{NMBP} follows.

Furthermore,
\[
-\alpha_j(z_0)
= -\log |\zeta_j(z_0)|
= -\log\left|
\frac{\overline{z}_j}{z_j} 
\frac{z_0-z_j}{z_0-\overline{z}_j}
\right|
= -\log\left|
\frac{z_0-z_j}{z_0-\overline{z}_j}
\right|
>0,
\]
where the last inequality follows from the assumptions
\(z_0,z_j\in\mathbb{C}_+\) and \(z_0\neq z_j\).
Combining this inequality with \eqref{PropPt}, we obtain the second assertion in \eqref{NMBP}.
\end{proof}

By Lemma~\ref{BPmod}, the modified resolvent matrix admits the factorization
\begin{equation}\label{RMmod}
\widetilde{U}_n(z_0)
=
\overrightarrow{\prod}_{j=1}^{n}
\exp\bigl(-\alpha_j(z_0)\widetilde{\mathcal{P}}_j\bigr),
\qquad n\ge1.
\end{equation}
All factors on the right-hand side of \eqref{RMmod} are $(-\mathcal{J})$-modules. This fact plays a key role in the proof of the following theorem.

\begin{thm}[Hamburger Criterion]\label{HC}
Suppose that all truncated problems \eqref{NPT} associated with the matrix
Nevanlinna--Pick problem \eqref{NP} are completely indeterminate.
Let \(\bigl(\widetilde{\mathcal P}_j\bigr)\) be the sequence defined by
\eqref{Trans} and \eqref{calU}.
Then the problem \eqref{NP} is completely indeterminate if and only if the
matrix series
\begin{equation}\label{HAM}
\sum_{j=1}^{\infty}
\frac{\operatorname{Im} z_j}{|z_0-z_j|^2}
\widetilde{\mathcal P}_j\mathcal J
\end{equation}
converges.
\end{thm}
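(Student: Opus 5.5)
The plan is to work entirely at the fixed point \(z_0\). Lemma~\ref{BPmod} represents the modified resolvent matrix \(\widetilde U_n(z_0)\) as the product \eqref{RMmod} of \((-\mathcal J)\)-moduli \(\exp(\lambda_j\widetilde{\mathcal P}_j)\) with \(\lambda_j=-\alpha_j(z_0)>0\). The first step is to show that the weights \(\lambda_j\) are comparable to \(\operatorname{Im}z_j/|z_0-z_j|^2\). Writing \(q_j=|z_0-z_j|^2/|z_0-\overline z_j|^2\in(0,1)\), we have \(1-q_j=4\operatorname{Im}z_0\operatorname{Im}z_j/|z_0-\overline z_j|^2\) and \(\lambda_j=-\tfrac12\log q_j\).

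\emph{Sufficiency.} Assume the series \eqref{HAM} converges. The terms \(\widetilde{\mathcal P}_j\mathcal J\) are non-negative with \(\operatorname{tr}(\widetilde{\mathcal P}_j\mathcal J)\ge c>0\)? That uniform lower bound is not automatic, so instead I argue through the weights. Convergence forces the terms to tend to zero. I claim this gives \(1-q_j\to0\), hence \(\lambda_j\sim\tfrac12(1-q_j)\asymp \operatorname{Im}z_j/|z_0-\overline z_j|^2\), and this in turn is comparable to \(\operatorname{Im}z_j/|z_0-z_j|^2\) once \(q_j\) is bounded away from \(0\). To justify the first implication I will use that \(\widetilde{\mathcal P}_j\mathcal J\ne O\); since a nonzero non-negative matrix with trace bounded below cannot shrink, I need a lower bound on \(\operatorname{tr}(\widetilde{\mathcal P}_j\mathcal J)\). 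Here I expect to use \(\widetilde{\mathcal P}_j^2=-\widetilde{\mathcal P}_j\), which gives \(\|\widetilde{\mathcal P}_j\|\ge1\), so \(\|\widetilde{\mathcal P}_j\mathcal J\|\ge1\). Thus the terms going to zero forces the scalar weights to zero, which yields \(q_j\to1\) and the comparability. Consequently \(\sum_j\lambda_j\widetilde{\mathcal P}_j\) converges, since it is a positive-term series in the cone after multiplying by \(\mathcal J\). Theorem~\ref{t33} then shows that \(\widetilde U_n(z_0)\) converges to an invertible limit \(\widetilde U(z_0)\). The remaining task is to translate this into the Weyl balls. Using Lemma~\ref{FFMIt} and \eqref{Jform}, the ball \(\mathfrak K_n(z_0)\) is determined by the matrix \(\widetilde U_n^{-*}(z_0)\mathcal J\widetilde U_n^{-1}(z_0)/(i(\overline z_0-z_0))\). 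Its semi-radii are obtained, via the standard Potapov formulas, from the blocks of \(\widetilde U_n(z_0)\mathcal J\widetilde U_n^*(z_0)\) after inversion. Convergence to an invertible limit then gives semi-radii \(r(z_0),\rho(z_0)\) of full rank.

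\emph{Necessity.} Suppose \(\operatorname{rank}r(z_0)=\operatorname{rank}\rho(z_0)=m\). The semi-radii \(r_n,\rho_n\) decrease monotonically to positive definite limits, and \(c_n\) converges. I will express \(\widetilde U_n(z_0)\mathcal J\widetilde U_n^*(z_0)\) and its inverse-type counterpart \(\widetilde U_n^{-*}\mathcal J\widetilde U_n^{-1}\) in terms of \(c_n,r_n^{\pm2},\rho_n^{\pm2}\). This is possible because the FFMI matrix, written in block form, has Schur complements \(r_n^{-2}\) and \(\rho_n^{2}\) (up to the factor \(2\operatorname{Im}z_0\)) and off-diagonal entries involving \(c_n\). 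Uniform bounds on \(r_n^{-1}\), \(\rho_n^{\pm1}\) and \(c_n\) then give the two uniform bounds required in Theorem~\ref{Pmod}. That theorem yields convergence of \(\sum\lambda_j\widetilde{\mathcal P}_j\). As before, the terms tend to zero, so \(q_j\to1\), \(\lambda_j\) is comparable to the weights in \eqref{HAM}, and multiplying by \(\mathcal J\) with a positive-term comparison (Corollary~\ref{tr X} style via traces) gives convergence of \eqref{HAM}.

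\emph{Main obstacle.} The main obstacle is the precise dictionary between the blocks of \(\widetilde U_n(z_0)\mathcal J\widetilde U_n^*(z_0)\) (and of its inverse analogue) and the Weyl ball parameters \(c_n,r_n,\rho_n\). Both bounds in Theorem~\ref{Pmod} must follow from the nondegeneracy of the limit ball. I would first derive the explicit block formula from the FFMI, by completing squares in \(w\), and then check boundedness separately for each block.
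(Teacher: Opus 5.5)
Your proposal is correct and follows the paper's route. For necessity, you use the block formulas \eqref{JI0}--\eqref{J0} to get the uniform bounds needed to apply Theorem~\ref{Pmod} to the product \eqref{RMmod} of $(-\mathcal J)$-moduli. For sufficiency, you use Theorem~\ref{t33} and then pass to the limit in \eqref{J0}. In both directions, the asymptotic $-\alpha_j(z_0)\sim 2\operatorname{Im}z_0\operatorname{Im}z_j/|z_0-z_j|^2$ links the two series.

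The only real variation is how you force the weights to zero. You use $\|\widetilde{\mathcal P}_j\|\ge 1$ (since $-\widetilde{\mathcal P}_j$ is a nonzero idempotent and $\mathcal J$ is unitary), whereas the paper uses $\operatorname{tr}\widetilde{\mathcal P}_j=-m$; both work. Your bound in fact gives the uniform estimate $\operatorname{tr}(\widetilde{\mathcal P}_j\mathcal J)\ge 1$ that you initially doubted. Also note that bounding $\|\widetilde U_n\mathcal J\widetilde U_n^*\|$ requires $r_n$ to be bounded above, not just $r_n^{-1}$; monotonicity supplies this.
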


\begin{proof}
Assume first that the matrix Nevanlinna--Pick problem \eqref{NP} is completely indeterminate.

By \eqref{Jform}  and \cite[Eq.~(2.33)]{D22}, for \(n\ge1\),
\begin{align}
\widetilde{U}_n^{-*}(z_0) \mathcal{J} \widetilde{U}_n^{-1}(z_0)
&=
\begin{pmatrix}
I & -c_n^*(z_0)\\
O & I
\end{pmatrix}
\begin{pmatrix}
\rho_n^{2}(z_0) & O\\
O & -r_n^{-2}(z_0)
\end{pmatrix}
\begin{pmatrix}
I & O\\
-c_n(z_0) & I
\end{pmatrix},
\label{JI0}\\[2mm]
\widetilde{U}_n(z_0) \mathcal{J} \widetilde{U}_n^*(z_0)
&=
\begin{pmatrix}
I & O\\
c_n(z_0) & I
\end{pmatrix}
\begin{pmatrix}
\rho_n^{-2}(z_0) & O\\
O & -r_n^{2}(z_0)
\end{pmatrix}
\begin{pmatrix}
I & c_n^*(z_0)\\
O & I
\end{pmatrix}.
\label{J0}
\end{align}

It is well known that if a Nevanlinna--Pick problem is completely indeterminate, then the limits
\[
\lim_{n\to\infty}c_n(z_0),\qquad
\lim_{n\to\infty}\rho_n(z_0),\qquad
\lim_{n\to\infty}r_n(z_0)
\]
exist, and that the latter two limits are positive definite. Consequently, as \(n\to\infty\), the right-hand sides of \eqref{JI0} and \eqref{J0} converge to nonsingular matrices. 

It follows that the left-hand sides of these equalities also converge to nonsingular matrices.
Thus, there exists a constant \(C>0\) such that
\begin{equation*}
\left\|
\widetilde{U}_n^{-*}(z_0) \mathcal{J} \widetilde{U}_n^{-1}(z_0)
\right\|
\leq C,
\qquad
\left\|
\widetilde{U}_n(z_0) \mathcal{J} \widetilde{U}_n^*(z_0)
\right\|
\leq C,
\quad n\geq 1.
\end{equation*}

By \eqref{RMmod}, for every \(n\geq 1\), the modified resolvent matrix
\(\widetilde U_n(z_0)\) is a product of \((-\mathcal J)\)-moduli.
Thus, all the assumptions of Theorem~\ref{Pmod} are satisfied.
 Therefore, the series
\begin{equation}\label{HAM1}
\sum_{j=1}^{\infty}
-\alpha_j(z_0)\widetilde{\mathcal P}_j
\end{equation}
converges.

Taking the trace termwise in \eqref{HAM1}, we obtain the convergence of
the scalar series
\begin{equation*}
\sum_{j=1}^{\infty}
-\alpha_j(z_0)\operatorname{tr}\widetilde{\mathcal P}_j.
\end{equation*}
It follows from \eqref{PropPt} that
\[
\operatorname{tr}\widetilde{\mathcal P}_j=-m,
\qquad j\geq 1.
\]
Therefore, the scalar series with positive terms
\begin{equation}\label{HAM11}
\sum_{j=1}^{\infty}
-\alpha_j(z_0)
\end{equation}
converges.

By the definition of \(\alpha_j(z_0)\) in \eqref{alfi} , we have
\begin{align*}
-\alpha_j(z_0)
&= -\log |\zeta_j(z_0)|
= -\frac{1}{2}\log \left (\zeta_j(z_0) \overline{ \zeta_j(z_0)}\right )\\
&= -\frac{1}{2}\log\left(
\frac{z_0-z_j}{z_0-\overline{z}_j} \frac{\overline{z_0}-\overline{z_j}}{\overline{z_0}-z_j}
\right)
= \frac12
\log\left(
1+
\frac{4\operatorname{Im}z_0\,\operatorname{Im}z_j}
{|z_0-z_j|^2}
\right).
\end{align*}
Thus
\begin{equation}\label{Help1}
-\alpha_j(z_0)
=
\frac12
\log\left(
1+
\frac{4\operatorname{Im}z_0\,\operatorname{Im}z_j}
{|z_0-z_j|^2}
\right).
\end{equation}
Since the series \eqref{HAM11} converges,
\[
-\alpha_j(z_0)\to0,
\qquad j\to\infty .
\]
Hence, \eqref{Help1} implies
\begin{equation*}
\frac{4 \operatorname{Im}z_0\, \operatorname{Im}z_j}
{|z_0-z_j|^2}
\longrightarrow0,
\qquad j\to\infty .
\end{equation*}
Using the asymptotic relation \(\log(1+t)\sim t\) as \(t\to0\), we obtain
\begin{equation}\label{eqv}
  -\alpha_j(z_0)
\sim
\frac{2 \operatorname{Im}z_0 \operatorname{Im}z_j}
{|z_0-z_j|^2},
\qquad j\to\infty .
\end{equation}
Since \(\mathcal J\) is invertible, the series \eqref{HAM1} converges if and only if the series 
\begin{equation}\label{HAM2}
\sum_{j=1}^{\infty}
-\alpha_j(z_0) \widetilde{\mathcal P}_j\mathcal{J}
\end{equation}
converges.

We have
\begin{align*}
\widetilde{\mathcal P}_j\mathcal{J}&=
\mathcal{U}_{j-1}\mathcal{P}_j\mathcal{U}_{j-1}^{-1}\mathcal{J}
=
\mathcal{U}_{j-1}\mathcal{P}_j\mathcal{J}\mathcal{U}_{j-1}^{*}\\
&=\mathcal{U}_{j-1}
\frac{|z_j|^2}{2\operatorname{Im} z_j}
\begin{pmatrix}
        P_j^*(0)P_j(0) &  -P_j^*(0)Q_j(0)       \\
       -Q_j^*(0)P_j(0) & Q_j^*(0)Q_j(0)
    \end{pmatrix}\mathcal{U}_{j-1}^{*}\\
&=\frac{|z_j|^2}{2\operatorname{Im} z_j}\mathcal{U}_{j-1}
\begin{pmatrix}
P_j^*(0) \\
- Q_j^*(0)
\end{pmatrix}
\bigl(
P_j(0), -Q_j(0)
\bigr)\mathcal{U}_{j-1}^{*}.
\end{align*}
Consequently,
\begin{equation}\label{PJXXs}
 \widetilde{\mathcal P}_j\mathcal{J}=
 \frac{|z_j|}{\sqrt{2\operatorname{Im} z_j}}\mathcal{U}_{j-1}
\begin{pmatrix}
P_j^*(0) \\
- Q_j^*(0)
\end{pmatrix}
\bigl(P_j(0), -Q_j(0)\bigr)\mathcal{U}_{j-1}^{*}\frac{|z_j|}{\sqrt{2\operatorname{Im} z_j}}.
\end{equation}
It follows from \eqref{PJXXs} that the series \eqref{HAM2}
satisfies the assumptions of Corollary~\ref{tr X}.
Hence, by Corollary~\ref{tr X}, the matrix series \eqref{HAM2}
converges if and only if the scalar series
\begin{equation}\label{HAM3}
\sum_{j=1}^{\infty}
-\alpha_j(z_0) \operatorname{tr}\left(\widetilde{\mathcal P}_j\mathcal{J}\right)
\end{equation}
converges. Thus, if the Nevanlinna--Pick problem \eqref{NP} is completely indeterminate, then the series \eqref{HAM3} converges.

Consider also the series 
\begin{equation}\label{HAM4}
\sum_{j=1}^{\infty}
\frac{\operatorname{Im}z_j}{|z_0-z_j|^2}
\operatorname{tr}\left(\widetilde{\mathcal P}_j\mathcal{J}\right).
\end{equation}
We shall show that the series \eqref{HAM3} and \eqref{HAM4} either both converge or both diverge.
Indeed, the terms of both series are positive, and by \eqref{eqv}, we have
\[
\lim_{j\to\infty}
\frac{-\alpha_j(z_0)|z_0-z_j|^2}{\operatorname{Im}z_j}
=
2 \operatorname{Im}z_0>0 .
\]
Therefore, by the classical comparison test, the series
\eqref{HAM3} and \eqref{HAM4} either both converge or both diverge.

Thus, complete indeterminacy of the problem \eqref{NP} implies  convergence of the series \eqref{HAM4}.
Applying Corollary~\ref{tr X}  again, we conclude that the series \eqref{HAM4} converges if and only if the series \eqref{HAM} converges.

This proves the necessity of the condition \eqref{HAM}.

Conversely, assume that the series \eqref{HAM} converges. Then the series
\begin{equation*}
\sum_{j=1}^{\infty}
\frac{\operatorname{Im} z_j}{|z_0-z_j|^2}
\widetilde{\mathcal P}_j
\end{equation*}
converges and, consequently, so does the series
\begin{equation}\label{HAM6}
\sum_{j=1}^{\infty}
\frac{\operatorname{Im} z_j}{|z_0-z_j|^2}
\operatorname{tr}\widetilde{\mathcal P}_j.
\end{equation}
Since
\[
\operatorname{tr}\widetilde{\mathcal P}_j=-m,
\]
it follows that the series
\begin{equation*}
\sum_{j=1}^{\infty}
\frac{\operatorname{Im} z_j}{|z_0-z_j|^2}
\end{equation*}
converges. Hence,
\begin{equation*}
\lim_{j\to\infty}
\frac{\operatorname{Im} z_j}{|z_0-z_j|^2}=0.
\end{equation*}
It follows that
\begin{equation}\label{eqv1}
-\alpha_j(z_0)=
\frac12
\log\left(
1+
\frac{4\operatorname{Im}z_0\,\operatorname{Im}z_j}
{|z_0-z_j|^2}
\right)
\sim
\frac{2\operatorname{Im}z_0\,\operatorname{Im}z_j}
{|z_0-z_j|^2},
\qquad j\to\infty.
\end{equation}

Along with the series \eqref{HAM6}, consider the series
\begin{equation}\label{HAM9}
\sum_{j=1}^{\infty}
-\alpha_j(z_0)
\operatorname{tr}\widetilde{\mathcal P}_j.
\end{equation}
Both series have non-negative terms, and it follows from \eqref{eqv1} and the classical comparison test that the series \eqref{HAM9} converges.

Applying Corollary~\ref{tr X}, we conclude that the series
\begin{equation*}
\sum_{j=1}^{\infty}
-\alpha_j(z_0)
\widetilde{\mathcal P}_j
\end{equation*}
converges. Hence, by
Theorem~\ref{t33}, the product 
\[ \widetilde{U}_n(z_0)
=
\overrightarrow{\prod}_{j=1}^{n}
\exp\bigl(-\alpha_j(z_0)\widetilde{\mathcal{P}}_j\bigr)\]
converges at \(z_0\) to a
nonsingular matrix.

It is well known that, under the assumptions \eqref{TNPci},
the limits
\[
\lim_{n\to\infty} c_n(z_0),\qquad
\lim_{n\to\infty} \rho_n(z_0),\qquad
\lim_{n\to\infty} r_n(z_0)
\]
exist.
Passing to the limit \(n\to\infty\) in \eqref{J0}, we see that the
left-hand side converges to a nonsingular matrix. Since the two
triangular factors involving \(c_n(z_0)\) converge to nonsingular
matrices, it follows that the middle factor also converges to a
nonsingular matrix. Consequently, the limits
\(\lim_{n\to\infty}\rho_n(z_0)\) and
\(\lim_{n\to\infty}r_n(z_0)\) are nonsingular matrices.

Therefore, the matrix Nevanlinna--Pick problem \eqref{NP} is completely
indeterminate.
\end{proof}

\begin{rem}
From \eqref{HAM} and the identity
\(
\operatorname{tr}\widetilde{\mathcal{P}}_j = -m,
\)
it follows that the series
\[
\sum_{j=1}^{\infty} \frac{\operatorname{Im} z_j}{|z_0 - z_j|^2}
\]
converges for the completely indeterminate Nevanlinna--Pick problem \eqref{NP}, i.e., the interpolation nodes \(\mathcal{Z}\) satisfy the Blaschke condition in the upper half-plane \(\mathbb{C}_+\).
\end{rem}

\begin{thm}\label{ReMa}
Assume that all truncated problems \eqref{NPT} associated with the matrix
Nevanlinna--Pick problem \eqref{NP} are completely indeterminate, and let
$(\widetilde{U}_n)$ be the corresponding sequence  \eqref{RMBPt} of normalized resolvent
matrices. If, in addition, the problem \eqref{NP} is completely indeterminate,
then there exist holomorphic matrix functions
\[
U:\mathbb{C}_+\setminus \mathcal{Z} \to \mathbb{C}^{2m\times 2m}, \qquad
U^{-1}:\mathbb{C}_+\setminus \mathcal{Z}  \to \mathbb{C}^{2m\times 2m},
\]
such that
\begin{equation}\label{RezolU}
U(z)=\lim_{n\to\infty} \widetilde{U}_n(z), \qquad
U^{-1}(z)=\lim_{n\to\infty} \widetilde{U}_n^{-1}(z), \qquad z \in \mathbb{C}_+\setminus \mathcal{Z},
\end{equation}
where both convergences are uniform on compact subsets of 
$\mathbb{C}_+ \setminus \mathcal{Z}$. Moreover, $U$ and $U^{-1}$ are mutually inverse:
\[
U(z) U^{-1}(z) = U^{-1}(z) U(z) = I_{2m}, \qquad z \in \mathbb{C}_+\setminus \mathcal{Z}.
\]
\end{thm}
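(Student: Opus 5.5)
We plan to write each modified factor as the exponential of a locally summable sequence of matrices and then apply Theorem~\ref{t33} with locally uniform control. By \eqref{BmaM} and the commutation rule \eqref{Li},
\[
\widetilde b_j(z)=\exp\bigl(-(\alpha_j(z)+i(\varphi_j(z)-\phi_j))\widetilde{\mathcal P}_j\bigr)
=\exp\bigl(-\lambda_j(z)\widetilde{\mathcal P}_j\bigr),
\]
where $\lambda_j(z)=\alpha_j(z)+i(\varphi_j(z)-\phi_j)$. If $\arg$ is chosen continuously, this is $\operatorname{Log}\bigl(\zeta_j(z)/\zeta_j(z_0)\bigr)$ modulo $2\pi i$. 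Since $\exp(2\pi i\widetilde{\mathcal P}_j)=I$ by \eqref{Iden1}, $\lambda_j$ may be replaced by the principal value $\mu_j(z)=\operatorname{Log}\bigl(\zeta_j(z)/\zeta_j(z_0)\bigr)$ whenever this ratio lies off the negative axis. This holds for all large $j$, uniformly on compacts, because the ratio tends to $1$, as shown next. So $\widetilde b_j(z)=\exp(-\mu_j(z)\widetilde{\mathcal P}_j)$ with $\mu_j$ holomorphic near any compact $K\subset\mathbb C_+\setminus\mathcal Z$ for $j\ge N(K)$.

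The key estimate compares $\zeta_j(z)/\zeta_j(z_0)$ with $1$. A direct computation gives
\[
\frac{\zeta_j(z)}{\zeta_j(z_0)}-1=\frac{(z-z_0)(z_j-\overline z_j)}{(z-\overline z_j)(z_0-z_j)}.
\]
Hence on $K$ we have $|\mu_j(z)|\le C_K\,\operatorname{Im}z_j/|z_0-z_j|^2$ for $j\ge N(K)$. To get this, we use that $|z-\overline z_j|\ge \operatorname{Im}z$ is bounded below on $K$, and that $|z-\overline z_j|$ and $|z_0-z_j|$ are comparable for large $j$, either because $z_j\to\infty$ or because $\operatorname{Im}z_j\to0$. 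Both follow from the Blaschke-type convergence of $\sum\operatorname{Im}z_j/|z_0-z_j|^2$, which holds by the Remark after Theorem~\ref{HC}. The series \eqref{HAM} converges by Theorem~\ref{HC}. Since $\widetilde{\mathcal P}_j\mathcal J\ge O$, Corollary~\ref{tr X} combined with invertibility of $\mathcal J$ gives $\|\widetilde{\mathcal P}_j\|\le c\operatorname{tr}(\widetilde{\mathcal P}_j\mathcal J)$ for an absolute constant $c$. Therefore $\sum_j \frac{\operatorname{Im}z_j}{|z_0-z_j|^2}\|\widetilde{\mathcal P}_j\|<\infty$, and $\sum_j\mu_j(z)\widetilde{\mathcal P}_j$ converges absolutely and uniformly on $K$.

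The main obstacle is uniformity of the infinite product, since Theorem~\ref{t33} is stated pointwise. I would redo its standard estimate. With $S=\sum_j\sup_K\|\mu_j\widetilde{\mathcal P}_j\|$, all partial products and their inverses are bounded by $e^{S}$. Also $\|\prod_{N}^{n}-\prod_N^{n'}\|\le e^{S}\bigl(e^{\sum_{n<j\le n'}\|\cdot\|}-1\bigr)$. So the tail products $\overrightarrow{\prod}_{j=N}^n\widetilde b_j$ and $\overleftarrow{\prod}_{j=N}^n\widetilde b_j^{-1}$, where $\widetilde b_j^{-1}=\exp(\mu_j\widetilde{\mathcal P}_j)$, converge uniformly on $K$ to mutually inverse holomorphic limits. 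The finitely many initial factors $\widetilde b_1,\dots,\widetilde b_{N-1}$ are holomorphic and invertible on $K$, because $K$ avoids $\mathcal Z$ and each factor has determinant $\zeta_j(z)^{m}$ times a constant. Multiplying them on gives uniform convergence of $\widetilde U_n$ and $\widetilde U_n^{-1}$ on $K$. By Weierstrass, the limits $U$ and $U^{-1}$ are holomorphic on $\mathbb C_+\setminus\mathcal Z$. Passing to the limit in $\widetilde U_n\widetilde U_n^{-1}=\widetilde U_n^{-1}\widetilde U_n=I$ gives mutual inversion.
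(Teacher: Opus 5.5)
Your strategy works and takes a different route from the paper, but one identity is wrong and needs a small fix.

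\textbf{The slip.} The exponent $\lambda_j(z)=\alpha_j(z)+i(\varphi_j(z)-\phi_j)$ is not $\operatorname{Log}\bigl(\zeta_j(z)/\zeta_j(z_0)\bigr)$ modulo $2\pi i$. The modification \eqref{BmaM} removes only the argument $\phi_j$ of $\zeta_j(z_0)$, not its modulus. Your formula would give $\widetilde b_j(z_0)=I$, whereas Lemma~\ref{BPmod} gives $\widetilde b_j(z_0)=\exp(-\alpha_j(z_0)\widetilde{\mathcal P}_j)\neq I$.

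\textbf{The fix.} Since $e^{-i\phi_j}\zeta_j(z)=|\zeta_j(z_0)|\,\zeta_j(z)/\zeta_j(z_0)$, the correct statement is $\lambda_j(z)\equiv\operatorname{Log}\bigl(e^{-i\phi_j}\zeta_j(z)\bigr)=\alpha_j(z_0)+\mu_j(z)\pmod{2\pi i}$. The extra term is harmless. By \eqref{Help1} and $\log(1+x)\le x$, you have $|\alpha_j(z_0)|\le 2\operatorname{Im}z_0\,\operatorname{Im}z_j/|z_0-z_j|^2$. Hence $\sup_K|\lambda_j|\le (C_K+2\operatorname{Im}z_0)\operatorname{Im}z_j/|z_0-z_j|^2$ for $j\ge N(K)$. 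Moreover $e^{-i\phi_j}\zeta_j(z)\to1$ uniformly on $K$, so the principal branch is legitimate. The rest of your argument then goes through unchanged.

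\textbf{A simplification.} The lower bound $|z-\overline z_j|\ge c_K|z_0-z_j|$ that you need holds for every $j$. It follows from $|z-\overline z_j|\ge\max\{\operatorname{Im}z,\ |z_0-z_j|-|z|-|z_0|\}$, so no dichotomy on the behaviour of $z_j$ is required.

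\textbf{Comparison with the paper.} The paper does not estimate the factors individually. It uses the identities \eqref{Jfor}--\eqref{Jfor-} from \cite{D22} to show that $\widetilde U_n\mathcal J\widetilde U_n^*$ increases and $\widetilde U_n^{-*}\mathcal J\widetilde U_n^{-1}$ decreases in $n$. It takes the convergence of $\widetilde U_n(z_0)$ to a nonsingular limit from the sufficiency part of Theorem~\ref{HC}. It then cites a general convergence theorem for monotone sequences of $\mathcal J$-expansive functions to pass from the single point $z_0$ to compact subsets.

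You instead convert the convergent series \eqref{HAM} directly into a summable majorant $\sum_j\sup_K\|\lambda_j\widetilde{\mathcal P}_j\|<\infty$, using the explicit formula for $\zeta_j(z)/\zeta_j(z_0)-1$. You then prove a locally uniform version of Theorem~\ref{t33} by hand.

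Your route is self-contained and elementary and avoids the cited monotonicity theorem. It also makes visible why the normalization by $e^{-i\phi_j}$ is needed: it is exactly what drives the factors to $I$ uniformly on compacts. The paper's route additionally produces the monotonicity \eqref{Jleq1}. That monotonicity is used right afterwards to obtain \eqref{InF} and the FFMI for the full problem, and your argument does not supply it.
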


\begin{proof}
It follows from \cite[Thm.~3.3]{D22}, together with \eqref{Jform}, that for every
$z\in \mathbb{C}_+\setminus \mathcal{Z}$ and every $n\geq1$,
\begin{equation}
        \mathcal{J} - \widetilde U_n(z)\mathcal{J} \widetilde U_n^*(z) =
         i(z-\overline{z})\sum _{j=1}^{n}
         \left(
         \begin{array}{cc}
              P_j^*(\bar z) P_j(\bar z) & -P_j^*(\bar z) Q_j(\bar z) \\
               -Q_j^*(\bar z) P_j(\bar z) & Q_j^*(\bar z) Q_j(\bar z) \\
         \end{array}
  \right)
         \label{Jfor}
    \end{equation}
and
    \begin{equation}
        \mathcal{J} - \widetilde U_n^{-*}(z)\mathcal{J}\widetilde U_n^{-1}(z)
         = i(\overline{z} - z)\sum _{j=1}^{n}\mathcal{J}
         \left(
         \begin{array}{cc}
              P_j^*(  z) P_j(z) & -P_j^*(  z) Q_j(z) \\
               -Q_j^*(  z) P_j(z) & Q_j^*(  z) Q_j(z) \\
         \end{array}
  \right)\mathcal{J}.
         \label{Jfor-}
    \end{equation}

Since \(z\in\mathbb{C}_+\), we have
\[
i(\overline z - z)=2 \operatorname{Im} z>0.
\]
Moreover, each matrix summand in the series
\eqref{Jfor} and \eqref{Jfor-} is non-negative. Consequently, for every
\(n,k\in\mathbb N\) and every
\(z\in\mathbb{C}_+\setminus\mathcal Z\),
\begin{eqnarray*}
\widetilde U_n(z)\mathcal{J} \widetilde U_n^*(z)\geq \mathcal{J},\qquad  
\widetilde U_n(z)\mathcal J \widetilde U_n^*(z)
\le
\widetilde U_{n+k}(z)\mathcal J \widetilde U_{n+k}^*(z),
\end{eqnarray*}
\begin{eqnarray}\label{Jleq1} 
\widetilde U_n^{-*}(z)\mathcal{J}\widetilde U_n^{-1}(z)\leq \mathcal{J},\qquad
\widetilde U_n^{-*}(z)\mathcal J \widetilde U_n^{-1}(z)
\ge
\widetilde U_{n+k}^{-*}(z)\mathcal J \widetilde U_{n+k}^{-1}(z).
\end{eqnarray}
As a consequence, $(\widetilde U_n)$ is a
\emph{monotonically increasing sequence of $\mathcal{J}$-expanding matrix functions},
whereas $(\widetilde U_n^{-1})$ is a
\emph{monotonically decreasing sequence of $\mathcal{J}$-contractive matrix functions}
on $\mathbb{C}_+ \setminus \mathcal{Z}$.

The proof of the sufficiency part of Theorem~\ref{HC} shows that, under the assumptions of the present theorem, the sequence
\((\widetilde{U}_n(z_0))\) converges to a nonsingular  matrix. Consequently, the sequence
\((\widetilde{U}_n^{-1}(z_0))\) also converges to a nonsingular  matrix.

By the general properties of monotone matrix sequences (see, for example,
\cite{Ar1,Or,P1}), both sequences
\((\widetilde{U}_n)\) and
\((\widetilde{U}_n^{-1})\) converge uniformly on compact subsets of
\(\mathbb{C}_+ \setminus \mathcal{Z}\). This proves all assertions of the theorem.
\end{proof}

\begin{defn}
The matrix function
\(
U:\mathbb{C}_+\setminus \mathcal{Z}\to \mathbb{C}^{2m\times 2m},
\)
defined by~\eqref{RezolU}, is called the \emph{resolvent matrix} of the completely indeterminate Nevanlinna--Pick problem~\eqref{NP}.
\end{defn}

It follows from \eqref{RMBPt} and \eqref{RezolU} that the resolvent matrix admits the representation as an infinite product of modified Blaschke--Potapov factors:
\begin{equation*}
\widetilde U(z)=
\overrightarrow{\prod}_{j=1}^{\infty}\widetilde b_j(z)=
\overrightarrow{\prod}_{j=1}^{\infty}
\exp\bigl(-\alpha_j(z)\widetilde{\mathcal{P}}_j\bigr) 
\exp\bigl(-i(\varphi_j(z)-\phi_j)\widetilde{\mathcal{P}}_j\bigr),
\end{equation*}
for all \(z\in\mathbb{C}_+\setminus\mathcal{Z}\).

\begin{lem}
Under the assumptions of Theorem~\ref{ReMa}, for every \(n\ge1\) and every
\(z\in\mathbb{C}_+\setminus\mathcal{Z}\), the following inequality holds:
\begin{equation}\label{InF}
\widetilde U_n^{-*}(z) \mathcal{J} \widetilde U_n^{-1}(z)
\ge
U^{-*}(z) \mathcal{J} U^{-1}(z).
\end{equation}
\end{lem}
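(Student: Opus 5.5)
The inequality \eqref{InF} will follow from the monotonicity already established in the proof of Theorem~\ref{ReMa}, combined with a passage to the limit. Fix \(z\in\mathbb{C}_+\setminus\mathcal{Z}\) and \(n\ge1\). By the second inequality in \eqref{Jleq1}, for every \(k\in\mathbb{N}\) we have
\[
\widetilde U_n^{-*}(z)\mathcal J\widetilde U_n^{-1}(z)
\ge
\widetilde U_{n+k}^{-*}(z)\mathcal J\widetilde U_{n+k}^{-1}(z).
\]
This monotonicity is itself a consequence of \eqref{Jfor-}. The difference of the two sides equals \(2\operatorname{Im}z\) times a finite sum of matrices of the form \(\mathcal J X_j^*X_j\mathcal J\), where \(X_j=(P_j(z),\,-Q_j(z))\), and each such matrix is non-negative. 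So the first step is simply to quote this inequality, with \(n\) fixed and \(k\) arbitrary.

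The second step is to let \(k\to\infty\). By Theorem~\ref{ReMa}, \(\widetilde U_{n+k}^{-1}(z)\to U^{-1}(z)\). Taking adjoints is continuous, so \(\widetilde U_{n+k}^{-*}(z)\to U^{-*}(z)\). Since matrix multiplication is continuous in finite dimensions, it follows that
\[
\widetilde U_{n+k}^{-*}(z)\mathcal J\widetilde U_{n+k}^{-1}(z)\to U^{-*}(z)\mathcal J U^{-1}(z).
\]
Here \(U^{-*}(z)\) is understood as \((U^{-1}(z))^*\), which is the convention set in Section~2.

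The final step uses the fact that the cone \(\mathbb{C}^{m\times m}_{\ge}\), here of size \(2m\), is closed. For each vector \(x\), the scalar
\[
\bigl(x,(\widetilde U_n^{-*}\mathcal J\widetilde U_n^{-1}-\widetilde U_{n+k}^{-*}\mathcal J\widetilde U_{n+k}^{-1})x\bigr)
\]
is non-negative for every \(k\), and it converges as \(k\to\infty\). Its limit is therefore non-negative. This gives \eqref{InF}.

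I do not expect a real obstacle. The only point needing care is that the limit \(U^{-1}(z)\) in \eqref{RezolU} is the limit of the same normalized sequence \(\widetilde U_n^{-1}\) that appears in \eqref{Jleq1}, and not of the unnormalized \(U_n^{-1}\). Theorem~\ref{ReMa} guarantees exactly this.
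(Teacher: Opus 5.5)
Your proposal is correct and is the paper's proof: fix \(z\), let \(k\to\infty\) in the second inequality of \eqref{Jleq1}, and use the convergence \eqref{RezolU} of \(\widetilde U_n^{-1}(z)\) to \(U^{-1}(z)\). You also spell out steps the paper leaves implicit: the monotonicity via \eqref{Jfor-}, continuity of adjoints and products, closedness of the non-negative cone, and that the limit is taken along the normalized sequence \(\widetilde U_n^{-1}\).
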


\begin{proof}
Fix \(z\in\mathbb{C}_+\setminus\mathcal{Z}\).
Passing to the limit as \(k\to\infty\) in the  inequality of~\eqref{Jleq1},
and using~\eqref{RezolU}, we obtain~\eqref{InF}.
\end{proof}

\begin{lem}
Assume that all truncated interpolation problems~\eqref{NPT} and the problem~\eqref{NP} are completely indeterminate, and let the resolvent matrix \(U\) be defined by~\eqref{RezolU}. Then a Nevanlinna matrix function \(w\in\mathcal{R}_m\) is a solution to~\eqref{NP} if and only if it satisfies V.~P.~Potapov's Factorized Fundamental Matrix Inequality (FFMI):
\begin{equation}\label{FMI(i)}
\binom{I}{w(z)}^{*}
\frac{U^{-*}(z) \mathcal J U^{-1}(z)}
{i(\overline z-z)}
\binom{I}{w(z)}
\ge O,
\end{equation}
for every \(z\in\mathbb C_{+}\setminus\mathcal Z\).
\end{lem}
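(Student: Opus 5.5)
The plan is to pass from the finite FFMIs of Lemma~\ref{FFMIt} to the infinite one. The monotonicity \eqref{Jleq1} and the limit inequality \eqref{InF} carry the argument, and the key remark is that the FFMI for \(U_n\) coincides with the FFMI for \(\widetilde U_n\). Indeed, by \eqref{UandtU} we have \(U_n=\widetilde U_n\mathcal U_n\) with \(\mathcal U_n\) \(\mathcal J\)-unitary. Using \eqref{JUinv}, this gives
\(
U_n^{-*}\mathcal J U_n^{-1}=\widetilde U_n^{-*}\mathcal U_n^{-*}\mathcal J\mathcal U_n^{-1}\widetilde U_n^{-1}=\widetilde U_n^{-*}\mathcal J\widetilde U_n^{-1}.
\)
Hence, for \(z\in\mathbb C_+\setminus\mathcal Z_n\), a function \(w\in\mathcal R_m\) lies in \(\mathcal F_n\) if and only if
\[
\binom{I}{w(z)}^{*}\frac{\widetilde U_n^{-*}(z)\mathcal J\widetilde U_n^{-1}(z)}{i(\overline z-z)}\binom{I}{w(z)}\ge O .
\]
Throughout I use \(\mathcal F=\bigcap_n\mathcal F_n\).

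For necessity, let \(w\in\mathcal F\). Then \(w\in\mathcal F_n\) for every \(n\), so the displayed inequality holds for each \(n\) and each \(z\in\mathbb C_+\setminus\mathcal Z\subset\mathbb C_+\setminus\mathcal Z_n\). Fix such a \(z\). By Theorem~\ref{ReMa}, \(\widetilde U_n^{-1}(z)\to U^{-1}(z)\), so \(\widetilde U_n^{-*}(z)\mathcal J\widetilde U_n^{-1}(z)\to U^{-*}(z)\mathcal J U^{-1}(z)\). Non-negativity is preserved under limits, since the cone \(\mathbb C^{m\times m}_{\ge}\) is closed, and so \eqref{FMI(i)} follows.

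For sufficiency, assume \(w\in\mathcal R_m\) satisfies \eqref{FMI(i)} on \(\mathbb C_+\setminus\mathcal Z\). By \eqref{InF}, for every \(n\) we have \(\widetilde U_n^{-*}\mathcal J\widetilde U_n^{-1}\ge U^{-*}\mathcal JU^{-1}\), and \(i(\overline z-z)=2\operatorname{Im}z>0\). Sandwiching with \(\binom{I}{w(z)}\) then gives the FFMI for \(\widetilde U_n\), hence for \(U_n\), at every \(z\in\mathbb C_+\setminus\mathcal Z\).

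The one gap is that Lemma~\ref{FFMIt} asks for the inequality on \(\mathbb C_+\setminus\mathcal Z_n\), whereas we only have it off \(\mathcal Z\). I expect this to be the main technical point. To close it, I will argue that the left-hand side of \eqref{FMI(f)} is a continuous function of \(z\) on \(\mathbb C_+\setminus\mathcal Z_n\), because \(U_n^{-1}=\mathcal J U_n^*(\bar z)\mathcal J\) is rational with poles only in \(\mathcal Z_n\) and \(w\) is holomorphic. The points of \(\mathcal Z\setminus\mathcal Z_n\) must then be approximated from \(\mathbb C_+\setminus\mathcal Z\). This requires that \(\mathcal Z\) have no accumulation points in \(\mathbb C_+\), which holds here: by the Remark after Theorem~\ref{HC}, complete indeterminacy forces the Blaschke condition, so \(\operatorname{Im}z_j/|z_0-z_j|^2\to0\) and the \(z_j\) cannot accumulate inside \(\mathbb C_+\). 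Consequently every point of \(\mathcal Z\setminus\mathcal Z_n\) is isolated, and continuity extends the non-negativity to it. Lemma~\ref{FFMIt} then yields \(w\in\mathcal F_n\) for all \(n\), i.e. \(w\in\mathcal F\).
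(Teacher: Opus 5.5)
Your proof is correct and follows the paper's argument. Necessity comes from letting \(n\to\infty\) in the finite FFMIs via \eqref{RezolU}. Sufficiency comes from \eqref{InF}, followed by a continuity extension to \(\mathcal Z\setminus\mathcal Z_n\) before invoking Lemma~\ref{FFMIt}. Your Blaschke-condition argument for that continuity step is valid but stronger than needed: \(\mathcal Z\) is countable, so \(\mathbb C_+\setminus\mathcal Z\) is already dense in \(\mathbb C_+\).
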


\begin{proof}
Assume that \(w\in\mathcal R_m\) is a solution of problem~\eqref{NP}. Then \(w\) solves each truncated problem~\eqref{NPT}. Hence, by Lemma~\ref{FFMIt} and~\eqref{Jform}, for every \(n\ge1\),
\begin{equation}\label{FMIv}
\binom{I}{w(z)}^{*}
\frac{\widetilde U_n^{-*}(z) \mathcal J \widetilde U_n^{-1}(z)}
{i(\overline z-z)}
\binom{I}{w(z)}
\ge O
\end{equation}
for every \(z\in\mathbb C_{+}\setminus\mathcal Z_n\).

Since \(\mathcal Z_n\subset\mathcal Z\), inequality~\eqref{FMIv} holds for every \(n\ge1\) and every \(z\in\mathbb C_+\setminus\mathcal Z\). Fix \(z\in\mathbb C_+\setminus\mathcal Z\) and let \(n\to\infty\) in~\eqref{FMIv}. In view of~\eqref{RezolU}, we obtain~\eqref{FMI(i)}.

Conversely, assume that \(w\in\mathcal R_m\) satisfies~\eqref{FMI(i)}. Then, by~\eqref{InF}, inequality~\eqref{FMIv} holds for every \(n\ge1\) and every \(z\in\mathbb C_+\setminus\mathcal Z\). Fix \(n\ge1\). By continuity, inequality~\eqref{FMIv} extends to the points of \(\mathcal Z\setminus\mathcal Z_n\). Consequently, \eqref{FMIv} holds for every \(z\in\mathbb C_+\setminus\mathcal Z_n\). Applying Lemma~\ref{FFMIt}, we conclude that \(w\) solves every truncated problem~\eqref{NPT}. Therefore, \(w\) is a solution of~\eqref{NP}.
\end{proof}

\begin{thm}\label{tN2np}
Suppose that the problem~\eqref{NP} is completely indeterminate, and let the resolvent matrix \(U\) be given by~\eqref{RezolU}.
Partition \(U\) into \(m\times m\) blocks as follows:
\[
U=
\begin{pmatrix}
\alpha & \beta\\
\gamma & \delta
\end{pmatrix}.
\]

Then the following statements hold:
\begin{enumerate}
\item Let
\(
\begin{pmatrix}
\phi\\
\psi
\end{pmatrix}
\in \overline{\mathcal{R}}_m
\)
be a Nevanlinna pair with exceptional set
\(\mathcal D_{\phi\psi}\).
Define the discrete set in \(\mathbb{C}_+\) by
\[
\mathcal D=\mathcal D_{\phi\psi}\cup\mathcal Z.
\]
Then the matrix function
\begin{equation}\label{npIall}
w(z)=
\begin{cases}
\dfrac{\gamma(z)\phi(z)+\delta(z)\psi(z)}
{\alpha(z)\phi(z)+\beta(z)\psi(z)}
& \quad \text{for every } z\in\mathbb C_+\setminus\mathcal D,\\[2ex]
\displaystyle
\lim_{\substack{s\to z\\ s\in\mathcal U^*(z)}}
\dfrac{\gamma(s)\phi(s)+\delta(s)\psi(s)}
{\alpha(s)\phi(s)+\beta(s)\psi(s)}
& \quad \text{for every } z\in\mathcal D_{\phi\psi}\setminus\mathcal Z,\\[2ex]
w_j
& \quad \text{for } z=z_j\in\mathcal Z,
\end{cases}
\end{equation}
is holomorphic on \(\mathbb C_+\) and satisfies \(w\in\mathcal F\).
Here, \(\mathcal U^*(z)\) denotes a punctured neighborhood of
\(z\in\mathcal D_{\phi\psi}\setminus\mathcal Z\) such that
\(\mathcal U^*(z)\subset\mathbb C_+\) and
\(\mathcal U^*(z)\cap\mathcal D=\emptyset\).

\item Conversely, every matrix function \(w\in\mathcal F\) admits a representation of the form~\eqref{npIall} for some Nevanlinna pair
\(
\begin{pmatrix}
\phi\\
\psi
\end{pmatrix}
\in \overline{\mathcal{R}}_m.
\)

\item Two Nevanlinna pairs yield the same matrix function \(w\) in~\eqref{npIall} if and only if they are equivalent.
Thus, formula~\eqref{npIall} establishes a bijection between the solution set \(\mathcal F\) and the set of equivalence classes of Nevanlinna pairs \(\langle\overline{\mathcal R}_m\rangle\).
\end{enumerate}
\end{thm}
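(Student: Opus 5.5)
The plan is to follow the truncated case, with the FFMI \eqref{FMI(i)} and the properties of $U$ from Theorem~\ref{ReMa} and \eqref{Jfor}--\eqref{Jfor-} as the basic tools, and argue in the Potapov scheme of \cite{Ko,KoPo,Sah}. First I would record the $\mathcal J$-properties of the limit. Passing to the limit in \eqref{Jfor} and \eqref{Jfor-} shows that, for $z\in\mathbb C_+\setminus\mathcal Z$,
\[
U(z)\mathcal J U^*(z)\ge\mathcal J,\qquad U^{-*}(z)\mathcal J U^{-1}(z)\le\mathcal J .
\]
The left-hand sides differ from $\mathcal J$ by $2\operatorname{Im}z$ times a convergent nonnegative series, and the series converges because the monotone limits exist.

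For part~1, take a Nevanlinna pair $\binom{\phi}{\psi}$ and put $\binom{a}{b}=U\binom{\phi}{\psi}$. Then
\[
\binom{a}{b}^*\mathcal J\binom{a}{b}=\binom{\phi}{\psi}^*U^*\mathcal J U\binom{\phi}{\psi}\ge\binom{\phi}{\psi}^*\mathcal J\binom{\phi}{\psi}\ge O,
\]
using the equivalent form $U^*\mathcal JU\ge\mathcal J$. The standard argument then shows that $a=\alpha\phi+\beta\psi$ is invertible outside a discrete set: if $a(z)x=0$, the displayed form gives $\binom{\phi}{\psi}x$ in the kernel of the $\mathcal J$-excess, and strict $\mathcal J$-expansivity away from $\mathcal Z$ yields $x=0$. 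Strict expansivity follows from the positivity of the terms $\mathcal P_j\mathcal J$ with $\operatorname{tr}>0$, or directly from the $j=1$ summand when $\widehat v_1=I$. Hence $w=ba^{-1}$ satisfies $\operatorname{Im}w\ge O$ and is meromorphic off $\mathcal D$. Since $\binom{I}{w}=U\binom{\phi}{\psi}a^{-1}$, the expression in \eqref{FMI(i)} becomes $a^{-*}\binom{\phi}{\psi}^*\mathcal J\binom{\phi}{\psi}a^{-1}/(2\operatorname{Im}z)\ge O$. A Nevanlinna-type function satisfying $\operatorname{Im}w\ge O$ off a discrete set has only removable singularities there (it is bounded locally by the FFMI), so the limits in \eqref{npIall} exist and $w\in\mathcal R_m$. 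The FFMI lemma then gives $w\in\mathcal F$, and in particular $w(z_j)=w_j$, consistent with the third line of \eqref{npIall}.

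For part~2, given $w\in\mathcal F$, set $\binom{\phi}{\psi}=U^{-1}\binom{I}{w}$ on $\mathbb C_+\setminus\mathcal Z$, with $\mathcal D_{\phi\psi}=\mathcal Z$ after possibly extending meromorphically. Its rank is $m$ because $U^{-1}$ is invertible. The Nevanlinna condition is exactly \eqref{FMI(i)}. Then $U\binom{\phi}{\psi}=\binom{I}{w}$ gives $a=I$ and recovers $w$.

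For part~3, equivalent pairs clearly produce the same $w$, since $Q$ cancels in $ba^{-1}$. Conversely, if two pairs give the same $w$, then $U\binom{\phi_k}{\psi_k}=\binom{I}{w}a_k$, so $\binom{\phi_1}{\psi_1}=\binom{\phi_2}{\psi_2}a_2^{-1}a_1$ with $Q=a_2^{-1}a_1\in\mathcal Q_m$.

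The main obstacle is the behaviour at the nodes $z_j$. $U$ is defined only on $\mathbb C_+\setminus\mathcal Z$, and the nodes may accumulate only at the boundary, so one must show that the singularities of $w$ at points of $\mathcal D$ are removable and that the value at $z_j$ equals $w_j$. I would handle this by applying the truncated theorem locally, writing $U=U_n\cdot(\text{tail})$ where the tail is holomorphic near $z_1,\dots,z_n$, and then using the finite-case result.
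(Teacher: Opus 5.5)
Your route is the paper's own. The paper proves this theorem only by invoking the FFMI \eqref{FMI(i)} and the Potapov scheme it cites, and your parts 1--3 are that scheme.

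There is, however, a false step in part 1. You derive invertibility of $a=\alpha\phi+\beta\psi$ from strict $\mathcal J$-expansivity of $U$ off $\mathcal Z$, and $U$ is not strictly $\mathcal J$-expansive in general. Neither of your justifications works. Each $\mathcal P_j\mathcal J$ has rank exactly $m$ (the paper notes $\operatorname{rank}\mathcal P_j=m$). The $j=1$ summand is a scalar multiple of $\binom{I}{w_1^*}\widehat K_1^{-1}(I\ \ w_1)$, again of rank $m$. A sum of such terms need not be definite.

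In fact, by \eqref{1kindP1}--\eqref{2kind},
\[
P_j(z)x-Q_j(z)y=\widehat K_j^{-1/2}(-B_j^*\mathbf P_{j-1}^{-1},\,I)\,R_{T_j}(z)\operatorname{col}(x+w_1y,\dots,x+w_jy)
\]
(read suitably for $j=1$). So every $\binom{x}{y}$ with $x+w_ky=0$ for all $k$ is annihilated by every summand in \eqref{Jfor-}.

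Concretely, take $m=1$, $z_j=i2^j$, and $w_j=i$ for all $j$.
\begin{itemize}
\item The Pick matrices are $T_n^{-1}\bigl(2/(y_j+y_k)\bigr)_{j,k=1}^{n}T_n^{-*}>0$.
\item The problem is completely indeterminate: the functions $i(1+\epsilon B)/(1-\epsilon B)$, $|\epsilon|<1$, with $B$ the Blaschke product of the nodes, are solutions and take distinct values at $z_0$.
\item Yet $\binom{-i}{1}$ lies in the kernel of $\mathcal J-U^{-*}(z)\mathcal JU^{-1}(z)$ at every $z$.
\end{itemize}

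The argument only needs the excess restricted to $U^{-1}\binom{O}{\mathbb C^m}$. Suppose $a(z)x=0$ for some $z\in\mathbb C_+\setminus\mathcal D$, and put $h=\binom{\phi(z)}{\psi(z)}x$. Then $U(z)h=\binom{0}{c}$ with $c=b(z)x$. The $(2,2)$ block of $\mathcal J$ is $O$, and the $(2,2)$ block of $\mathcal JM\mathcal J$ is $M_{11}$, so \eqref{Jfor-} gives
\[
0\le h^*\mathcal Jh=\binom{0}{c}^*U^{-*}(z)\mathcal JU^{-1}(z)\binom{0}{c}
=-2\operatorname{Im}z\;c^*\Bigl(\sum_{j=1}^{\infty}P_j^*(z)P_j(z)\Bigr)c .
\]
Since $P_1(z)=(1-z/z_1)^{-1}\widehat K_1^{-1/2}$ is invertible, $c=0$. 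Hence $h=0$, and $x=0$ by the rank condition. This is where your $j=1$ idea genuinely applies. Equivalently, the $(2,2)$ block of $U^{-*}\mathcal JU^{-1}$ is negative definite, cf.\ \eqref{JI0}. The argument gives invertibility of $a$ at \emph{every} point of $\mathbb C_+\setminus\mathcal D$, which is what \eqref{npIall} requires, not merely off some further discrete set.

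Your main obstacle at the nodes is milder than you suggest.
\begin{itemize}
\item In part 1, $\operatorname{Im}w\ge O$ in a punctured neighbourhood of each point of $\mathcal D$, so the Cayley transform is bounded and the singularity is removable. The FFMI lemma then gives $w(z_j)=w_j$.
\item In part 2, normalize the pair $U^{-1}\binom{I}{w}$ by $(\phi-i\psi)^{-1}$. This factor is always invertible for a Nevanlinna pair, and $(\phi+i\psi)(\phi-i\psi)^{-1}$ is contractive, so the normalized pair extends holomorphically across $\mathcal Z$.
\item In part 3, $Q=(\phi_2-i\psi_2)^{-1}(\phi_1-i\psi_1)$ is visibly in $\mathcal Q_m$.
\end{itemize}
If you prefer the tail factorization, note that Theorem~\ref{ReMa} gives convergence only on compacts of $\mathbb C_+\setminus\mathcal Z$. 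Holomorphy of the tail near $z_1,\dots,z_n$ would therefore need its own monotone-convergence argument.
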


\begin{proof}
The proof is based on the FFMI~\eqref{FMI(i)} and follows the same scheme as in
\cite{Ko,KoPo,Sah}.
\end{proof}
\begin{rem}
Theorems~\ref{HC}, \ref{ReMa}, and~\ref{tN2np}, stated above, are fundamental results in the theory of the matrix Nevanlinna--Pick interpolation problem. They are counterparts of Theorems~\ref{BPP}, \ref{BPRM}, and~\ref{tN2}, respectively, in the theory of the matrix Hamburger moment problem. It is worth noting that, in the case of the moment problem, the corresponding theorems are considerably simpler than their counterparts in the general setting of the Nevanlinna--Pick interpolation problem.
\end{rem}

\section{Matrix Nevanlinna--Pick Problem with Purely Imaginary Interpolation Nodes}
The Hamburger criterion~\eqref{HAM} for the matrix Nevanlinna--Pick problem is an analogue of the Hamburger criterion~\eqref{SumE} for the matrix Hamburger moment problem. However, the terms of the series~\eqref{HAM} have a considerably more complicated structure than the corresponding expression~\eqref{SumE}. This additional complexity stems from the fact that the interpolation nodes in the Nevanlinna--Pick problem may be located arbitrarily in the upper half-plane.

In this section, we consider the Nevanlinna--Pick  interpolation problem with purely imaginary interpolation nodes. More precisely, in the {\it special matrix Nevanlinna--Pick interpolation problem}, 
one seeks to describe all Nevanlinna matrix functions \(w \in \mathcal{R}_m\) that satisfy the interpolation conditions
\begin{equation}
w(z_j) = w_j \qquad \text{for all } j \in \mathbb{N}.
\label{NPS}
\end{equation}
Here, $(z_j) \subset \mathbb{C}_+$ is the sequence of distinct purely imaginary interpolation nodes,
\begin{equation}\label{nod+}
\mathcal{Z}=\bigl\{z_j=iy_j\bigr\}_{j=1}^{\infty},
\qquad
y_j>0,
\qquad
y_{j+1}>y_j,
\qquad
\lim_{j\to\infty}y_j=+\infty.
\end{equation}
and $(w_j)_{j=1}^\infty \subset \mathbb{C}^{m \times m}$ is the corresponding sequence of interpolation values.

The special Nevanlinna--Pick problem considered here is a particular case of the general Nevanlinna--Pick problem. Consequently, all the results established in the previous section remain valid in the present setting. On the other hand, the special problem also admits several additional results that have no counterparts in the general case. These additional results make the analogy between the moment problem and the special Nevanlinna--Pick problem substantially deeper than in the general setting.

In the previous section, we fixed an arbitrary point $z_0$ satisfying~\eqref{z0}. In the case of the special Nevanlinna--Pick problem, we now choose a positive number $y_0$ such that
\[
0<y_0<y_j,
\qquad
j\ge1,
\]
and define
\begin{equation}\label{z00}
z_0=iy_0.
\end{equation}
Throughout this section, we assume that $z_0$ is chosen according to~\eqref{z00}.

\begin{lem}\label{PtPL}
Let the special matrix Nevanlinna--Pick problem~\eqref{NPS} be such that all its truncated problems~\eqref{NPT} are completely indeterminate. Furthermore, let the sequence of matrices $(\mathcal{P}_j)$ and the sequence of modified matrices $(\widetilde{\mathcal{P}}_j)$ be defined by~\eqref{PPjPQ} and~\eqref{Trans}, respectively. Then
\begin{equation}\label{PtP}
\widetilde{\mathcal{P}}_j=\mathcal{P}_j,
\qquad
j\ge1.
\end{equation}
\end{lem}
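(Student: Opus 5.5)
Our approach is to show that all the normalizing phases vanish, \(\phi_j=0\) for every \(j\ge1\). Once this is done, every factor \(\exp(-i\phi_k\widetilde{\mathcal P}_k)\) in \eqref{calU} equals \(I\). Hence \(\mathcal U_j=I\) for all \(j\ge1\), and the recursion \eqref{Trans} gives \(\widetilde{\mathcal P}_j=\mathcal U_{j-1}\mathcal P_j\mathcal U_{j-1}^{-1}=\mathcal P_j\). Formally this is an induction on \(j\): the case \(j=1\) is \(\widetilde{\mathcal P}_1=\mathcal P_1\) by definition. If \(\mathcal U_{j-1}=I\), then \(\widetilde{\mathcal P}_j=\mathcal P_j\), and \(\mathcal U_j=\exp(-i\phi_j\widetilde{\mathcal P}_j)\mathcal U_{j-1}=\exp(0)=I\).

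The key computation is the value \(\zeta_j(z_0)\) from \eqref{bf} under \eqref{nod+} and \eqref{z00}. With \(z_j=iy_j\), \(\bar z_j=-iy_j\) and \(z_0=iy_0\),
\[
\frac{\bar z_j}{z_j}=-1,\qquad
\frac{z_0-z_j}{z_0-\bar z_j}=\frac{i(y_0-y_j)}{i(y_0+y_j)}=\frac{y_0-y_j}{y_0+y_j}.
\]
Therefore
\[
\zeta_j(z_0)=\frac{y_j-y_0}{y_j+y_0}.
\]
This is a real number in \((0,1)\), because \(0<y_0<y_j\). Its principal argument is therefore \(0\), so \(\phi_j=0\) in \eqref{FIn}.

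The only point needing care is the sign. The factor \(\bar z_j/z_j=-1\) must combine with the negative quantity \(y_0-y_j\) to give a positive value rather than a negative one. Otherwise the argument would be \(\pi\), and the modified projectors would not reduce to \(\mathcal P_j\). This is exactly why \(y_0\) is chosen below all the \(y_j\). Apart from this check, the argument is immediate.
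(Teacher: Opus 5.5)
Your proposal is correct and follows essentially the same route as the paper: you compute \(\zeta_j(iy_0)=\frac{y_j-y_0}{y_j+y_0}>0\), so \(\phi_j=0\), every factor in \eqref{calU} is the identity, \(\mathcal U_j=I\), and \eqref{Trans} then gives \(\widetilde{\mathcal P}_j=\mathcal P_j\). Your induction is harmless but not really needed, since \(\exp(-i\phi_k\widetilde{\mathcal P}_k)=\exp(O)=I\) no matter what \(\widetilde{\mathcal P}_k\) is.
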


\begin{proof}
By~\eqref{FIn}, for every $j\ge1$,
\[
\phi_j
=
\operatorname{Arg}\bigl(\zeta_j(iy_0)\bigr)
=
\operatorname{Arg}\biggl(\frac{-iy_0}{iy_0}\,\frac{iy_0-iy_j}{iy_0+iy_j}\biggr)
=
\operatorname{Arg}\biggl(\frac{y_j-y_0}{y_j+y_0}\biggr)
=
0.
\]
Here, the last equality follows from the fact that
\[
\frac{y_j-y_0}{y_j+y_0}>0,\qquad j\ge 1.
\]

Hence, by~\eqref{calU},
\[
\mathcal{U}_j=I,
\qquad
j\ge1.
\]
It then follows from~\eqref{Trans} that~\eqref{PtP} holds.
\end{proof}

The Hamburger criterion~\eqref{HC} now takes a considerably simpler form.

\begin{thm}[Hamburger criterion for the special Nevanlinna--Pick problem]\label{HCS}
Suppose that all the truncated problems~\eqref{NPT} associated with the special matrix Nevanlinna--Pick problem~\eqref{NPS} are completely indeterminate. Furthermore, let $(P_j)$ and $(Q_j)$ denote the sequences of rational matrix functions of the first and second kind, respectively, defined by~\eqref{1kindP1}, \eqref{1kind}, \eqref{2kindQ1}, and~\eqref{2kind}.

Then the problem~\eqref{NPS} is completely indeterminate if and only if the series
\begin{equation}\label{HAMS}
\sum_{j=1}^{\infty}
\begin{pmatrix}
    P_j^*(0)P_j(0) & -P_j^*(0)Q_j(0) \\
    -Q_j^*(0)P_j(0) & Q_j^*(0)Q_j(0)
\end{pmatrix}
\mathcal{J}
\end{equation}
converges.
\end{thm}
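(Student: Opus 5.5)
The plan is to derive Theorem~\ref{HCS} directly from the general Hamburger criterion, Theorem~\ref{HC}, applied with the special choice $z_0=iy_0$ from~\eqref{z00}. By Lemma~\ref{PtPL} we have $\widetilde{\mathcal P}_j=\mathcal P_j$ for all $j\ge1$. So the problem~\eqref{NPS} is completely indeterminate if and only if the series
\[
\sum_{j=1}^{\infty}\frac{\operatorname{Im} z_j}{|z_0-z_j|^2}\,\mathcal P_j\mathcal J
\]
converges. It then remains to show that this series converges if and only if~\eqref{HAMS} converges.

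Write $M_j=X_j^*X_j$ with $X_j=\bigl(P_j(0),\,-Q_j(0)\bigr)\in\mathbb C^{m\times 2m}$. Then $M_j$ is exactly the block matrix appearing in~\eqref{HAMS}. By~\eqref{PPjPQ} and $\mathcal J^2=I$,
\[
\mathcal P_j\mathcal J=\frac{|z_j|^2}{2\operatorname{Im} z_j}\,M_j .
\]
Substituting $z_j=iy_j$ and $z_0=iy_0$, the general term of the series becomes
\[
\frac{y_j}{(y_j-y_0)^2}\cdot\frac{y_j^2}{2y_j}\,M_j=\lambda_j M_j,
\qquad
\lambda_j=\frac{y_j^2}{2(y_j-y_0)^2}.
\]

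The key observation is that the scalar weights $\lambda_j$ are bounded above and below by positive constants. By~\eqref{nod+} and $0<y_0<y_j$, the ratio $y_j/(y_j-y_0)$ is decreasing in $y_j$ and tends to $1$. Hence
\[
\tfrac12<\lambda_j\le \frac{y_1^2}{2(y_1-y_0)^2}\qquad\text{for all } j\ge1 .
\]

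By Corollary~\ref{tr X}, applied once with weights $\lambda_j$ and once with weights $1$, the matrix series $\sum\lambda_jM_j$ converges if and only if $\sum\lambda_j\operatorname{tr}M_j$ converges, and $\sum M_j$ converges if and only if $\sum\operatorname{tr}M_j$ converges. Since $\operatorname{tr}M_j\ge0$ and $\lambda_j$ is bounded between two positive constants, the comparison test shows that these two scalar series converge or diverge together. Finally, $\mathcal J$ is invertible, so $\sum M_j$ converges if and only if $\sum M_j\mathcal J$, which is~\eqref{HAMS}, converges.

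There is no serious obstacle; the only point requiring care is the two-sided bound on $\lambda_j$. That bound is where the hypotheses $y_0<y_j$ and $y_j\uparrow\infty$ are used, and it is what allows the weights to be dropped.
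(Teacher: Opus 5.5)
Your proposal is correct and follows the paper's proof essentially step for step. You reduce via Theorem~\ref{HC} and Lemma~\ref{PtPL} to the series with weights $\lambda_j=y_j^2/\bigl(2(y_j-y_0)^2\bigr)$, then conclude with Corollary~\ref{tr X}, comparison of the nonnegative trace series, and invertibility of $\mathcal J$. The one minor difference is that you bound the weights explicitly, $\frac{1}{2}<\lambda_j\le y_1^2/\bigl(2(y_1-y_0)^2\bigr)$, where the paper uses the limit $\lambda_j\to\frac{1}{2}$; your version needs only that $y_j$ is increasing, not that $y_j\to\infty$.
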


\begin{proof}
By~\eqref{nod+}, \eqref{z00}, and Lemma~\ref{PtPL}, the convergence condition in the Hamburger criterion~\eqref{HAM} is equivalent to the convergence of the series
\[
\sum_{j=1}^{\infty}
\frac{y_j}{(y_j-y_0)^2} 
\mathcal{P}_j.
\]
Using the representation~\eqref{PPjPQ} for the matrices $\mathcal{P}_j$, this series can be written in the form
\begin{equation}\label{HAMS+}
\sum_{j=1}^{\infty}
\frac{y_j^2}{2(y_j-y_0)^2}
\begin{pmatrix}
    P_j^*(0)P_j(0) & -P_j^*(0)Q_j(0) \\
    -Q_j^*(0)P_j(0) & Q_j^*(0)Q_j(0)
\end{pmatrix}
\mathcal{J}.
\end{equation}
Thus, the special Nevanlinna--Pick problem~\eqref{HCS} is indeterminate if and only if the series~\eqref{HAMS+} converges. For the proof of our theorem, it suffices to show that the series~\eqref{HAMS} and~\eqref{HAMS+} either both converge or both diverge.

Consider two numerical series with positive terms:
\begin{equation}\label{HAMS1}
\sum_{j=1}^{\infty}
\operatorname{tr}
\begin{pmatrix}
    P_j^*(0)P_j(0) & -P_j^*(0)Q_j(0) \\
    -Q_j^*(0)P_j(0) & Q_j^*(0)Q_j(0)
\end{pmatrix}
\end{equation}
and
\begin{equation}\label{HAMS+1}
\sum_{j=1}^{\infty}
\frac{y_j^2}{2(y_j-y_0)^2}
\operatorname{tr}
\begin{pmatrix}
    P_j^*(0)P_j(0) & -P_j^*(0)Q_j(0) \\
    -Q_j^*(0)P_j(0) & Q_j^*(0)Q_j(0)
\end{pmatrix}.
\end{equation}
Since
\[
\lim_{j\to\infty}
\frac{y_j^2}{2(y_j-y_0)^2}
=
\frac12
\neq0,
\]
the comparison criterion implies that the series~\eqref{HAMS1} and~\eqref{HAMS+1} either both converge or both diverge. By Corollary \eqref{tr X}, the same conclusion holds for the matrix series
\begin{equation*}
\sum_{j=1}^{\infty}
\begin{pmatrix}
    P_j^*(0)P_j(0) & -P_j^*(0)Q_j(0) \\
    -Q_j^*(0)P_j(0) & Q_j^*(0)Q_j(0)
\end{pmatrix}
\end{equation*}
and
\begin{equation*}
\sum_{j=1}^{\infty}
\frac{y_j^2}{2(y_j-y_0)^2}
\begin{pmatrix}
    P_j^*(0)P_j(0) & -P_j^*(0)Q_j(0) \\
    -Q_j^*(0)P_j(0) & Q_j^*(0)Q_j(0)
\end{pmatrix}.
\end{equation*}
It follows from this and from the nondegeneracy of the matrix \(\mathcal{J}\) that the series~\eqref{HAMS} and~\eqref{HAMS+} either both converge or both diverge.
\end{proof}

\begin{thm}
Under the assumptions of Theorem~\ref{HCS}, the special matrix Nevanlinna--Pick problem~\eqref{NPS} is completely indeterminate if and only if both series
\begin{equation}\label{MHC+}
\sum_{j=1}^{\infty}P_j^*(0)P_j(0),
\qquad
\sum_{j=1}^{\infty}Q_j^*(0)Q_j(0)
\end{equation}
converge.
\end{thm}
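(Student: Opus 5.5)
The plan is to reduce the statement to Theorem~\ref{HCS} and then split the block series into its diagonal blocks using Corollary~\ref{AjBi}. By Theorem~\ref{HCS}, the special problem~\eqref{NPS} is completely indeterminate if and only if the series~\eqref{HAMS} converges. Since $\mathcal{J}$ is invertible, and right multiplication by a fixed invertible matrix is a linear homeomorphism of $\mathbb{C}^{2m\times 2m}$, the series~\eqref{HAMS} converges if and only if the series without the factor $\mathcal{J}$,
\[
\sum_{j=1}^{\infty}
\begin{pmatrix}
P_j^*(0)P_j(0) & -P_j^*(0)Q_j(0) \\
-Q_j^*(0)P_j(0) & Q_j^*(0)Q_j(0)
\end{pmatrix},
\]
converges.

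Next, set $A_j = P_j(0)$ and $B_j = -Q_j(0)$, and let $X_j = (A_j\; B_j)\in\mathbb{C}^{m\times 2m}$. A direct multiplication shows
\[
X_j^*X_j=
\begin{pmatrix}
A_j^*A_j & A_j^*B_j\\
B_j^*A_j & B_j^*B_j
\end{pmatrix}
=
\begin{pmatrix}
P_j^*(0)P_j(0) & -P_j^*(0)Q_j(0) \\
-Q_j^*(0)P_j(0) & Q_j^*(0)Q_j(0)
\end{pmatrix},
\]
because $B_j^*B_j = Q_j^*(0)Q_j(0)$. This is exactly the factorization already used in~\eqref{PJXXs}.

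Corollary~\ref{AjBi} then says that $\sum_j X_j^*X_j$ converges if and only if both $\sum_j A_j^*A_j = \sum_j P_j^*(0)P_j(0)$ and $\sum_j B_j^*B_j = \sum_j Q_j^*(0)Q_j(0)$ converge. Chaining these equivalences gives the claim.

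There is no real obstacle here. The only points needing care are the sign bookkeeping in the choice $B_j=-Q_j(0)$, so that the off-diagonal blocks carry the minus sign, and the justification that multiplying by $\mathcal{J}$ does not affect convergence. This argument is the same as the one Corollary~\ref{R26} relies on for the moment problem.
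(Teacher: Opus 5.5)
Your proposal is correct and is essentially the paper's proof: reduce to Theorem~\ref{HCS}, remove the invertible factor $\mathcal{J}$, and apply Corollary~\ref{AjBi}. Your explicit choice $B_j=-Q_j(0)$ spells out the sign bookkeeping that the paper's one-line appeal to Corollary~\ref{AjBi} leaves implicit.
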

\begin{proof}
According to Theorem~\ref{HCS}, the special Nevanlinna--Pick problem is completely indeterminate if and only if the matrix series~\eqref{HAMS} converges.
The matrix series \eqref{HAMS} converges if and only if
the matrix series
\begin{equation}\label{BHC}
\sum_{j=1}^{\infty}
\begin{pmatrix}
    P_j^*(0)P_j(0) & -P_j^*(0)Q_j(0) \\
    -Q_j^*(0)P_j(0) & Q_j^*(0)Q_j(0)
\end{pmatrix}
\end{equation}
converges.
By Corollary~\ref{AjBi}, the convergence of the matrix series
\eqref{BHC} is equivalent to the convergence of the two matrix series
in \eqref{MHC+}.
\end{proof}
\begin{rem}
Formulas~\eqref{HAMS} and~\eqref{MHC+} are precisely the counterparts of formulas~\eqref{SumE} and~\eqref{MHC}, respectively. Indeed, they are obtained by replacing the matrix polynomials of the first and second kind with the corresponding rational matrix functions of the first and second kind. This simple correspondence holds for the problem~\eqref{NPS} but does not extend to general matrix Nevanlinna--Pick problems.
\end{rem}

\end{document}